\newcommand{\be}{\begin{eqnarray}}
\newcommand{\ee}{\end{eqnarray}}
\newcommand{\by}{\begin{eqnarray*}}
\newcommand{\ey}{\end{eqnarray*}}
\newcommand{\bn}{\begin{enumerate}}
\newcommand{\en}{\end{enumerate}}
\newcommand{\ei}{\end{itemize}}
\newtheorem{theorem}{Theorem}[section]
\newtheorem{lemma}[theorem]{Lemma}
\newtheorem{remark}[theorem]{Remark}
\renewcommand{\theequation}{\arabic{section}.\arabic{equation}}
\numberwithin{equation}{section}
\begin{document}
\date{}
\title{\bf The Smoluchowski-Kramers approximation for a McKean-Vlasov equation subject to environmental noise with state-dependent friction  \footnote{This work was supported by
the Natural Science Foundation of Jiangsu Province, BK20230899 and
the National Natural Science Foundation of China, 11771207.
}}
\author{ Chungang Shi$^{1}$\footnote{Corresponding author, shichungang@njust.edu.cn } 
\hskip1cm Yan Lv$^{2}$\footnote{lvyan@njust.edu.cn}
\hskip1cm Wei Wang$^{3}$\footnote{wangweinju@nju.edu.cn} \\
\texttt{{\scriptsize $^{1,2}$School of Mathematics and Statistics, Nanjing University of Science and Technology,
Nanjing, 210094, P. R. China}}\\
\texttt{{\scriptsize $^{3}$Department of Mathematics, Nanjing University,
Nanjing, 210023, P. R. China}}}\maketitle
\begin{abstract}
The small mass limit is derived for a McKean-Vlasov equation subject to environmental noise with state-dependent friction. By applying the averaging approach to a non-autonomous stochastic slow-fast system with the microscopic and macroscopic scales,  the convergence in distribution is obtained.
\end{abstract}

\textbf{Key Words:} The small mass limit;  McKean-Vlasov equation; Smoluchowski-Kramers approximation; Environmental noise; State-dependent friction.


\section{Introduction}\label{sec:intro}
  \setcounter{equation}{0}
  \renewcommand{\theequation}
{1.\arabic{equation}}
In the presence of interaction and confinement potentials, the evolution of the classical Newton dynamics for indistinguishable $N$-point particle system with mass $\epsilon$ in $\mathbb{R}^{d}$ is given by
\begin{eqnarray}\label{Intro:1.1}
\epsilon\ddot{X}_{i}+\gamma\dot{X}_{i}=-\nabla V(X_{i})-\frac{1}{N}\sum_{j=1}^{N}\nabla K(X_{i}-X_{j}), \quad i=1,2,\cdots,N,
\end{eqnarray}
where $X_{i}, \dot{X}_{i}\in\mathbb{R}^{d}$ are the position and velocity of the $i$-th particle respectively. The $\gamma>0$ describes the strength of linear damping in velocity and functions $V$ and $K:\mathbb{R}^{d}\to\mathbb{R}$ are the confinement potential and interaction potential respectively. As the number of particles $N$ tends to infinity, the microscopic descriptions of (\ref{Intro:1.1}) is approximately given by a McKean-Vlasov equation with the interaction potential replaced by a single averaged interaction~\cite{G},\cite{S},\cite{M},\cite{D}. In the real world, many systems may be disturbed by noise which is independent of each other acting on different particles. For example, particles are subject to the same space-dependent independent noise, in which case a $N$-particle system (\ref{Intro:1.1}) is described by
\begin{eqnarray}\label{equ:main}
&&\epsilon\ddot{X}_{t}^{i,\epsilon,N}+\gamma(X_{t}^{i,\epsilon,N})\dot{X}_{t}^{i,\epsilon,N}=\frac{1}{N}\sum_{j=1}^{\infty}K(X_{t}^{i,\epsilon,N}-X_{t}^{j,\epsilon,N})+\sum_{k=1}^{\infty}\sigma_{k}(X_{t}^{i,\epsilon,N})\circ\dot{B}_{t}^{k},\nonumber\\
&&X_{0}^{i,\epsilon,N}=X_{0}^{i},\quad V_{0}^{i,\epsilon,N}=V_{0}^{i}, \quad i=1,2,\cdots,N.
\end{eqnarray}
Here $\circ$ denotes the Stratonovich integral. The function $\gamma: \mathbb{R}^{d}\to \mathbb{R}$ is the state-dependent friction, $K, \sigma_{k}: \mathbb{R}^{d}\to \mathbb{R}^{d}$ are Lipschitz continuous functions defined later and $\{B^{k}\}_{k\in\mathbb{N}}$ are independent real-valued Brownian motions on a complete filtered probability space $(\Omega, \mathcal{F}, \mathcal{F}_{t}, \mathbb{P})$. Moreover, let $(X_{0}^{i},V_{0}^{i})$ be a sequence of i.i.d. random vectors in $\mathbb{R}^{d}$ which is independent of $\{B^{k}\}_{k\in\mathbb{N}}$ with law $\mu_{0}$.

Define the empirical measure 
\begin{equation*}
S_{t}^{N,\epsilon}\triangleq\frac{1}{N}\sum_{i=1}^{N}\delta_{(X_{t}^{i,\epsilon,N},V_{t}^{i,\epsilon,N})}
\end{equation*}
with $S_{0}^{N}=\frac{1}{N}\sum_{i=1}^{N}\delta_{(X_{0}^{i},V_{0}^{i})}$. Then the drift coefficient is rewritten as 
\begin{equation*}
\frac{1}{N}\sum_{j=1}^{\infty}K(X_{t}^{i,\epsilon,N}-X_{t}^{j,\epsilon,N})=K*S_{t}^{N,\epsilon}(X_{t}^{i,\epsilon,N}).
\end{equation*}
Similar approach as that for one order system~\cite{CF}, for fixed $\epsilon$, as $N\to\infty$, the limit of $S_{t}^{N,\epsilon}$ is shown to be $\mu_{t}^{\epsilon}$ which satisfies the following so-called mean field SPDE
\begin{eqnarray}\label{MFP}
d\mu_{t}^{\epsilon}+v\cdot\nabla_{x}\mu_{t}^{\epsilon}&=&\frac{\gamma(x)}{\epsilon}\nabla_{v}\cdot(v\mu_{t}^{\epsilon})dt-\frac{1}{\epsilon}\nabla_{v}\cdot((K*\mu_{t}^{\epsilon}(x))\mu_{t}^{\epsilon})dt\nonumber\\
&&-\frac{1}{\epsilon}\sum_{k=1}^{\infty}\nabla_{v}\cdot(\sigma_{k}(x)\mu_{t}^{\epsilon})dB_{t}^{k}+\frac{1}{\epsilon^{2}}\Delta_{v}\mu_{t}^{\epsilon}dt,
\end{eqnarray}
where $K*\mu_{t}^{\epsilon}(x)=\int_{\mathbb{R}^{2d}}K(x-y)\mu_{t}^{\epsilon}(y,dv)$. The measure-valued solutions of (\ref{MFP}) is a random probability measure~\cite{CF} whose randomness is due to environmental noise. In other words, the conditional law of $X_{t}^{i,\epsilon,N}$ given $\mathcal{F}_{t}^{B}$ that is the filtration associated to $\{B_{t}^{k}\}_{k\in\mathbb{N}}$ converges weakly to $\mu_{t}^{\epsilon}$.
Now, let ($X_{t}^{\epsilon,\mu^{\epsilon}},V_{t}^{\epsilon,\mu^{\epsilon}}$) be the solution of the following equations
\begin{eqnarray}\label{MEL}
dX_{t}^{\epsilon}&=&V_{t}^{\epsilon}dt,\nonumber\\
\epsilon dV_{t}^{\epsilon}&=&-\gamma(X_{t}^{\epsilon})V_{t}^{\epsilon}dt+K*\mu^{\epsilon}_{t}(X_{t}^{\epsilon})dt+\sum_{k=1}^{\infty}\sigma_{k}(X_{t}^{\epsilon})\circ dB_{t}^{k},\\
X_{0}^{\epsilon}&=&X_{0},\quad V_{0}^{\epsilon}=V_{0}\nonumber.
\end{eqnarray}
Given $\mu_{0}$ which satisfies assumption $(\mathbf{H_{4}})$ defined in next section, the push forward of $\mu_{0}$ with respect to the solution of stochastic characteristic equation (\ref{MEL}) namely $\mu_{t}^{\epsilon}(\omega)=(X_{t}^{\epsilon,\mu^{\epsilon}}(\omega),V_{t}^{\epsilon,\mu^{\epsilon}}(\omega))\#\mu_{0}(\omega)$ exactly satisfies (\ref{MFP}) in weak sense~\cite[Definition 11]{CF}. In fact, for any $\psi(x,y)\in C_{b}^{2}(\mathbb{R}^{2d})$, by It${\rm\hat{o}}$'s formula and the homogeneity assumption $(\mathbf{H_{3}})$ defined in next section,
\begin{eqnarray*}
d\psi(X_{t}^{\epsilon,\mu^{\epsilon}},V_{t}^{\epsilon,\mu^{\epsilon}})&=&\nabla_{x}\psi\cdot V_{t}^{\epsilon,\mu^{\epsilon}}dt+\nabla_{v}\psi\cdot(-\frac{\gamma(X_{t}^{\epsilon,\mu^{\epsilon}})}{\epsilon}V_{t}^{\epsilon,\mu^{\epsilon}}+\frac{1}{\epsilon}K*\mu_{t}^{\epsilon}(X_{t}^{\epsilon,\mu^{\epsilon}}))dt\\
&&+\frac{1}{2\epsilon^{2}}\Delta_{v}\psi dt+\sum_{k=1}^{\infty}\frac{1}{\epsilon}\nabla_{v}\psi\cdot\sigma_{k}(X_{t}^{\epsilon,\mu^{\epsilon}})dB_{t}^{k},
\end{eqnarray*}
integrating with respect to $\mu_{0}$, we have
\begin{eqnarray*}
d\langle\psi,\mu_{t}^{\epsilon}\rangle&=&\langle(\nabla_{x}\psi\cdot v-\frac{\gamma(x)}{\epsilon}\nabla_{v}\psi\cdot v+\frac{1}{\epsilon}\nabla_{v}\psi\cdot (K*\mu_{t}^{\epsilon})+\frac{1}{2\epsilon^{2}}\Delta_{v}\psi),\mu_{t}^{\epsilon}\rangle dt\\
&&+\frac{1}{\epsilon}\sum_{k=1}^{\infty}\langle\nabla_{v}\psi\cdot\sigma_{k},\mu_{t}^{\epsilon}\rangle dB_{t}^{k}.
\end{eqnarray*}
Then (\ref{MFP}) is derived by using stochastic Fubini theorem. 

Here we are concerned with the limit $\epsilon\to0$ on both sides of (\ref{MEL}). For a single particle, if the particle has a very small mass, then the small mass limit is desirable. There are numerous works on the small mass limit which is also known as the Smoluchowski-Kramers approximation~\cite{M1, SMD, HMVW}. These works all consider the small mass limit on the microscopic scale. In recent work~\cite{WLW}, Wang et al. derived a small mass limit for a stochastic $N$-particle system on the macroscopic scale by applying the averaging approach. However, they only considered the case of constant friction coefficients. In the paper, we consider a small mass limit with a state-dependent friction coefficient on the macroscopic scale by introducing some macroscopic quantities, which may be regarded as a generalization of ~\cite{HMVW} to $N$-particle system. We mainly study the macroscopic scale limit of the Vlasov-Fokker-Planck equation corresponding to (\ref{MFP}) so as to obtain the small mass limit of the equation (\ref{MEL}) in the sense of distribution. 

There are numerous works on the stochastic $N$-particle system~\cite{CC},\cite{WLW},\cite{BC}. Carrillo and Choi~\cite{CC} considered a general $N$ particle system without stochastic perturbation forces and made use of a discrete version of a modulated kinetic energy together with the bounded Lipschitz distance for measures to derive the mean field limit. Coghi and Flandoli~\cite{CF} proved the propagation of chaos for the first order stochastic $N$-particle system with environmental noise and the mean field limit PDE is shown to be a SPDE which is different from Wang et al.~\cite{WLW}. Liu and Wang~\cite{LW} considered a propagation of chaos for interacting particles system with common noise and a small mass limit is derived with constant friction. For more works, we refer to~\cite{Ro, HJNXZ, HQ}.

Furtherore, there are also some works on the macroscopic limit of Vlasov type equation. For example, Jabin~\cite{Ja} investigated the limit of some kinetic equation as the mass of the particle tends to zero. Karper~\cite{Ka} studied the hydrodynamic limit of a kinetic Cucker-Smale flocking model by means of a relative entropy method and the resulting asymptotic dynamics is an Euler-type flocking system. Huang~\cite{Hu} studied a kinetic Vlasov-Fokker-Planck equation and established a quantified estimate of the overdamped limit between the original equation and the aggregation-diffusion equation by adopting a probabilistic approach. For more results, we refer to~\cite{FS, FST, CT}. 

The rest of the paper is organized as follows. Some essential preliminaries and main results are given in section \ref{Pre}. The proof of the small mass limit is presented in the last section.

\section{Preliminary and main result}\label{Pre}
  \setcounter{equation}{0}
  \renewcommand{\theequation}
{2.\arabic{equation}}

Let $\mathbb{E}$ denote expectation with respect to $\mathbb{P}$ and $\mathbb{E}_{X}$ denotes the expectation with respect to the distribution of $X$.
Assuming that $\|\cdot\|$ denotes a vector or matrix norm and $\langle\cdot,\cdot\rangle$ represents the inner product on $\mathbb{R}^{d}$. 

Let $\mathcal{P}_{2}$ denote the sets consisting of Borel probability measure on $\mathbb{R}^{d}$ with 
\begin{equation*}
\int_{\mathbb{R}^{d}}\|x\|^{2}\mu(dx)<\infty,
\end{equation*}
for each $\mu\in\mathcal{P}_{2}$. For any $\mu$ and $\nu$ in $\mathcal{P}_{2}$, define the following Monge-Kantorovich (or Wasserstein) distance:
\begin{equation}\label{MKD}
dist_{MK,2}(\mu,\nu)=\inf_{\pi\in\Pi(\mu,\nu)}\Big[\int\int_{\mathbb{R}^{d}\times\mathbb{R}^{d}}\|x-y\|^{2}\pi(dx,dy)\Big]^{\frac{1}{2}},
\end{equation}
where $\Pi(\mu,\nu)$ denotes the set of Borel probability measures $\Pi$ on $\mathbb{R}^{d}\times\mathbb{R}^{d}$ with the first and second marginals $\mu$ and $\nu$. Equivalently, for $\mu, \nu\in \mathcal{P}_{2}$,
\begin{equation}\label{equ:2.101}
dist_{MK,2}(\mu,\nu)=\inf_{(X,\bar{X})}[\mathbb{E}\|X-\bar{X}\|^{2}]^{\frac{1}{2}}
\end{equation}
with random variables $X$ and $\bar{X}$ in $\mathbb{R}^{d}$ having laws $\mu$ and $\nu$ respectively.

Next we give some assumptions to system (\ref{MEL}).

$(\mathbf{H_{1}})$.$K: \mathbb{R}^{d}\to\mathbb{R}^{d}$ is Lipschitz continuous with Lipschitz constant $L_{K}$, that is 
$\|K(x)-K(y)\|\leq L_{K}\|x-y\|$, for any $x,y\in \mathbb{R}^{d}$.

$(\mathbf{H_{2}})$.$\gamma(\cdot): \mathbb{R}^{d}\to\mathbb{R}$ is a continuous and differential function whose derivative is bounded by $L_{\gamma}$ and there exists $\gamma_{0}$ and $\gamma_{1}$ such that 
\begin{eqnarray*}
0<\gamma_{0}\leq \gamma(x)\leq \gamma_{1},\quad x\in \mathbb{R}^{d}.
\end{eqnarray*}

$(\mathbf{H_{3}})$. {\rm (i)}$~\sigma_{k}: \mathbb{R}^{d}\to\mathbb{R}^{d}$ is measurable and satisfies $\sum_{k=1}^{\infty}\sup_{x\in\mathbb{R}^{d}}\|\sigma_{k}(x)\|\triangleq L_{\sigma}<\infty$. 

{\rm (ii)}~$\sigma_{k}$ is a $C^{2}$ divergence free vector fields, that is 
\begin{eqnarray*}
{\rm div}\sigma_{k}=0,\quad k\geq1.
\end{eqnarray*} 

Define the matrix-valued function $Q: \mathbb{R}^{d}\times\mathbb{R}^{d}\to \mathbb{R}^{d\times d}$ as 
\begin{eqnarray*}
Q^{ij}(x,y)\triangleq \sum_{k=1}^{\infty}\sigma_{k}^{i}(x)\sigma_{k}^{j}(y).
\end{eqnarray*}

{\rm (iii)} With a little abuse of notation, there exists a function $Q: \mathbb{R}^{d}\to\mathbb{R}^{d}\times\mathbb{R}^{d}$ such that 

{\rm (a)} $Q(x,y)=Q(x-y)$;

{\rm (b)} $Q(0)=I_{d}$;

{\rm (c)} $Q(\cdot)\in C^{2}(\mathbb{R}^{d})$ and $sup_{x\in\mathbb{R}^{d}}|\partial_{x_{i}x_{j}}^{2}Q(x)|<\infty$.

$(\mathbf{H_{4}})$. For the initial condition $\mu_{0}: \Omega\to\mathcal{P}_{2}(\mathbb{R}^{d})$ of equation (\ref{MFP}), let $\mu_{0}$ be $\mathcal{F}_{0}$-measurable.

\begin{remark}
Under the assumption $(\mathbf{H_{3}})$, the Stratonovich integral $\int_{0}^{t}\sigma_{k}(X_{s}^{i,\epsilon})\circ dB_{s}^{k}$ is equal to the It${\rm\hat{o}}$ integral $\int_{0}^{t}\sigma_{k}(X_{s}^{i,\epsilon})dB_{s}^{k}$, we refer to~\cite{CF}~or~\cite{LW} for details.
\end{remark}

Then we give the main result of the paper.
\begin{theorem}
Under the assumptions $(\mathbf{H_{1}})$-$(\mathbf{H_{4}})$, for any $T>s_{0}>0$, $\rho_{t}^{\epsilon}(x)\triangleq \int_{\mathbb{R}^{d}}\mu_{t}^{\epsilon}(x,v)dv$ converges weakly to $\rho_{t}$ for $s_{0}\leq t\leq T$ as $\epsilon\to0$ with
\begin{eqnarray}\label{main2}
\dot{\rho}_{t}(x)&=&-\nabla_{x}\cdot\Big[(\gamma(x)^{-1}(K*\rho_{t}(x))\rho_{t}(x)+\frac{1}{2}\sum_{k=1}^{\infty}\|\sigma_{k}(x)\|^{2}\nabla_{x}\gamma(x)\gamma(x)^{-3}\rho_{t}(x)\nonumber\\
&&+\sum_{k=1}^{\infty}\sigma_{k}(x)\rho_{t}(x)\gamma(x)^{-1}\dot{B}^{k}_{t}\Big]-\nabla_{x}^{2}\cdot\Big(\frac{1}{2}\sum_{k=1}^{\infty}\|\sigma_{k}(x)\|^{2}\gamma(x)^{-2}\rho_{t}(x)I_{d\times d}\Big),
\end{eqnarray}
which corresponds to the following SDE
\begin{eqnarray}\label{main-equ:2.2}
dX_{t}&=&\Big[\gamma(X_{t})^{-1}\tilde{E}K(X_{t}-\tilde{X}_{t})
+\frac{1}{2}\sum_{k=1}^{\infty}\|\sigma_{k}(X_{t})\|^{2}\nabla_{x}\gamma(X_{t})\gamma(X_{t})^{-3}\Big]dt\nonumber\\
&&+\sum_{k=1}^{\infty}\sigma_{k}(X_{t})\gamma(X_{t})^{-1}dB_{t}^{k}+\sqrt{\frac{1}{2}\sum_{k=1}^{\infty}\|\sigma_{k}(X_{t})\|^{2}\gamma(X_{t})^{-2}I_{d\times d}}dW_{t},
\end{eqnarray}
where $\tilde{X}_{t}$ is a version of $X_{t}$, $\tilde{E}$ is the expectation with respect to the distribution of $\tilde{X}_{t}$ and $B_{t}^{k}$ is a real-valued Brownian motion independent of standard $d$-dimensional Brownian motion $W_{t}$. Furthermore, $X_{t}^{\epsilon,\mu^{\epsilon}}$ converges  in distribution to $X_{t}$ as $\epsilon\to0$ in $L^{1}(\Omega, C(0,T;\mathbb{R}^{d}))$. 
\end{theorem}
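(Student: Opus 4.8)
The plan is to read (\ref{MEL}) as a stochastic slow--fast system, with $X_t^{\epsilon}$ the slow (macroscopic) variable and $V_t^{\epsilon}$ the fast (microscopic) variable relaxing on the $\epsilon$-time scale, and to run an averaging argument for the kinetic equation (\ref{MFP}). Since $\mu_t^{\epsilon}=(X_t^{\epsilon,\mu^{\epsilon}},V_t^{\epsilon,\mu^{\epsilon}})\#\mu_0$, the marginal $\rho_t^{\epsilon}$ is precisely the conditional (given the environmental noise) law of $X_t^{\epsilon,\mu^{\epsilon}}$, so the two assertions of the theorem are two faces of one convergence. I would introduce the macroscopic quantities $\rho_t^{\epsilon}(x)=\int\mu_t^{\epsilon}(x,dv)$, $u_t^{\epsilon}(x)=\int v\,\mu_t^{\epsilon}(x,dv)$, $P_t^{\epsilon}(x)=\int v\otimes v\,\mu_t^{\epsilon}(x,dv)$; testing (\ref{MFP}) against $1$, $v$ and $v\otimes v$ produces the hierarchy
\begin{eqnarray*}
d\rho_t^{\epsilon}&=&-\nabla_x\cdot u_t^{\epsilon}\,dt,\\
\epsilon\,du_t^{\epsilon}&=&\big[-\epsilon\nabla_x\cdot P_t^{\epsilon}-\gamma(x)u_t^{\epsilon}+(K*\rho_t^{\epsilon})(x)\rho_t^{\epsilon}\big]dt+\textstyle\sum_k\sigma_k(x)\rho_t^{\epsilon}\,dB_t^{k},
\end{eqnarray*}
together with an analogous $\epsilon$-equation for $P_t^{\epsilon}$ whose only $O(\epsilon^{-2})$ source is the velocity diffusion (of the form $c\,\epsilon^{-2}\rho_t^{\epsilon}I$, forced by $(\mathbf{H_3})$(iii)(b)). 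Equivalently, on the characteristic level, using $dX_t^{\epsilon}=V_t^{\epsilon}\,dt$ (so that the Stratonovich and It\^o integrals coincide here), I would solve $\epsilon\,dV_t^{\epsilon}=-\gamma(X_t^{\epsilon})V_t^{\epsilon}dt+\cdots$ for $V_t^{\epsilon}\,dt$, integrate, and integrate the resulting term $-\epsilon\int_0^t\gamma(X_s^{\epsilon})^{-1}\,dV_s^{\epsilon}$ by parts; this isolates the vanishing boundary pieces $-\epsilon\gamma(X_t^{\epsilon})^{-1}V_t^{\epsilon}$, $\epsilon\gamma(X_0)^{-1}V_0$ and the genuinely contributing correction $\epsilon\int_0^t\gamma(X_s^{\epsilon})^{-2}(V_s^{\epsilon}\otimes V_s^{\epsilon})\nabla\gamma(X_s^{\epsilon})\,ds$, which will generate the friction-gradient drift and the extra diffusion of (\ref{main-equ:2.2}).

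First I would establish the uniform-in-$\epsilon$ bounds needed for compactness: $\sup_{t\le T}\mathbb{E}\|X_t^{\epsilon}\|^2\le C$, the energy estimates $\sup_{t\le T}\epsilon\,\mathbb{E}\|V_t^{\epsilon}\|^2\le C$ and $\sup_{t\le T}\mathbb{E}\|\epsilon V_t^{\epsilon}\otimes V_t^{\epsilon}\|\le C$ (hence $\sup_t\mathbb{E}\|\epsilon P_t^{\epsilon}\|\le C$), obtained by It\^o's formula on $\|V_t^{\epsilon}\|^2$ and $\epsilon\|V_t^{\epsilon}\|^2$ using $\gamma\ge\gamma_0>0$ and the second moment of $\rho_t^{\epsilon}$. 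These give tightness of the laws of $X^{\epsilon,\mu^{\epsilon}}$ in $C([0,T];\mathbb{R}^{d})$ (the increments $\int_s^t V_r^{\epsilon}\,dr$ are controlled in $L^1$ by the energy bound, yielding an Aldous/Kolmogorov estimate) and tightness of $\{\rho_t^{\epsilon}\}$ in $\mathcal{P}_2(\mathbb{R}^{d})$, uniformly on $[s_0,T]$. The restriction $t\ge s_0>0$ enters exactly here: $V_0$ has the fixed $O(1)$ size rather than the stationary size $O(\epsilon^{-1/2})$, so the fast variable needs a boundary layer of width $\sim\epsilon$ to reach local equilibrium, and the estimates and averaging are only uniform away from $t=0$.

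The heart of the proof is the averaging step. Along a subsequence, let $(X^{\epsilon,\mu^{\epsilon}},\rho^{\epsilon})$ converge in distribution to $(X,\rho)$, realized on one probability space. The dissipativity in $\epsilon\,dV_t^{\epsilon}=-\gamma(X_t^{\epsilon})V_t^{\epsilon}\,dt+\cdots$ drives $V_t^{\epsilon}$ to a local Gaussian equilibrium, so that $\epsilon\,\mathbb{E}[V_t^{\epsilon}\otimes V_t^{\epsilon}\mid X_t^{\epsilon}=x]\to\frac{1}{2\gamma(x)}\sum_k\sigma_k(x)\sigma_k(x)^{\top}=\frac{1}{2\gamma(x)}I$; freezing the slow variable on short time intervals and closing with a Gr\"onwall argument then shows $\int_0^t\nabla_x\cdot(\epsilon P_s^{\epsilon})\,ds$ and $\epsilon\int_0^t\gamma(X_s^{\epsilon})^{-2}(V_s^{\epsilon}\otimes V_s^{\epsilon})\nabla\gamma\,ds$ converge to their averaged values, which yields the $\nabla_x\gamma$ drift in (\ref{main-equ:2.2}), while $\sum_k\int_0^t\gamma^{-1}\sigma_k(X_s^{\epsilon})\,dB_s^{k}$ passes to the limit by continuity of $\gamma,\sigma_k$. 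The delicate feature is that the residual fast fluctuations of $V_s^{\epsilon}\otimes V_s^{\epsilon}$ about their local mean --- equivalently, the descent of the velocity Laplacian $\epsilon^{-2}\Delta_v\mu^{\epsilon}$ of (\ref{MFP}) into a \emph{spatial} diffusion --- contribute in the limit an extra martingale orthogonal to the $B^{k}$; I would identify it through a martingale-problem computation, evaluating the quadratic variation of the limiting martingale part of $X_t$ and representing the orthogonal component as the $\gamma(X_s)^{-1}\sqrt{\tfrac12\sum_k\|\sigma_k(X_s)\|^2}\,dW_s$ term of (\ref{main-equ:2.2}) for a Brownian motion $W$ independent of $\{B^{k}\}$ (the enlargement of the space being exactly why the convergence of $X^{\epsilon,\mu^{\epsilon}}$ can only be in distribution). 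Assembling, $(\rho,X)$ solves (\ref{main2})/(\ref{main-equ:2.2}); well-posedness of this McKean--Vlasov SPDE/SDE under $(\mathbf{H_1})$--$(\mathbf{H_3})$, proved by a Wasserstein contraction and Gr\"onwall using that $\gamma^{-1}$, $\nabla\gamma\,\gamma^{-3}$ and $\sigma_k\gamma^{-1}$ are bounded and Lipschitz, pins down the limit, so the whole family converges; the $L^{1}(\Omega;C([0,T];\mathbb{R}^{d}))$ statement then follows from weak convergence on path space together with the uniform integrability furnished by $\sup_t\mathbb{E}\|X_t^{\epsilon}\|^2\le C$.

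The step I expect to be the main obstacle is this averaging/identification. Because the friction is state-dependent, the fast subsystem is a \emph{non-autonomous}, randomly modulated Ornstein--Uhlenbeck process --- its coefficient $\gamma(X_t^{\epsilon})$ depends on the slow, itself random, path --- and the McKean--Vlasov term $K*\mu_t^{\epsilon}$ couples the two scales, so classical averaging theorems for autonomous ergodic fast variables do not apply off the shelf. One must control, uniformly on $[s_0,T]$, both the law-of-large-numbers convergence of the quadratic functional $\epsilon\int_0^t(V_s^{\epsilon}\otimes V_s^{\epsilon})(\cdots)\,ds$ to its local average and the central-limit-type fluctuation that gives birth to $W$, and simultaneously propagate these through the nonlinear, law-dependent drift; this coupling of homogenization with the mean-field structure is the technical core.
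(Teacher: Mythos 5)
Your overall framework is the same as the paper's: moment (macroscopic) quantities $\rho_t^{\epsilon}$, local momentum and kinetic pressure, uniform second-moment and increment bounds giving tightness of $X^{\epsilon,\mu^{\epsilon}}$ in $C(0,T;\mathbb{R}^{d})$, and a freeze-the-slow-variable averaging of the momentum equation in which the frozen velocity is an Ornstein--Uhlenbeck-type process with $\mathbb{E}^{x}(\epsilon V\otimes V)\to\frac{1}{2}\sum_{k}\|\sigma_{k}(x)\|^{2}\gamma(x)^{-1}I_{d\times d}$ (the paper's Lemma \ref{LVV}) and with the stochastic convolution $\frac{1}{\epsilon}\int e^{-\gamma(x)(\cdot)/\epsilon}dB^{k}$ collapsing to $\gamma(x)^{-1}dB^{k}$ (inside Lemma \ref{YR}). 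The paper makes the "freeze and Gr\"onwall" step quantitative: the auxiliary process $\hat{Y}^{\epsilon}$ on a mesh of size $\delta$, the error bounds of Lemmas \ref{LC1}, \ref{YS}, \ref{YR} which blow up like $\sqrt{\delta}/\epsilon$ and $\delta/\epsilon^{2}$, and the final choice $\delta=\mathcal{O}(\epsilon^{3})$; your sketch would need the same quantitative balance, and note the paper does not carry out the uniqueness/Wasserstein-contraction step you propose (it works along subsequences), nor any pathwise Hottovy--McDaniel--Volpe--Wehr integration by parts -- the noise-induced drift $\frac{1}{2}\sum_{k}\|\sigma_{k}\|^{2}\nabla_{x}\gamma\,\gamma^{-3}\rho$ comes out of the $x$-derivative hitting the relaxation kernel $e^{-\gamma(x)(t-s)/\epsilon}$ when the pressure term is tested against $\psi$.

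The genuine gap is in your identification of the extra diffusion, i.e.\ the $-\nabla_{x}^{2}\cdot\big(\frac{1}{2}\sum_{k}\|\sigma_{k}\|^{2}\gamma^{-2}\rho_{t}I_{d\times d}\big)$ term of (\ref{main2}) and the corresponding $dW$ term of (\ref{main-equ:2.2}). You attribute it to a central-limit-type fluctuation of $V_{s}^{\epsilon}\otimes V_{s}^{\epsilon}$ about its local mean, to be captured as an orthogonal martingale via a martingale-problem computation for the limit of $X^{\epsilon,\mu^{\epsilon}}$. That step would fail: the fluctuations of $\epsilon V^{\epsilon}\otimes V^{\epsilon}$ have $O(1)$ size but correlation time $O(\epsilon)$, so their time integrals vanish (they are exactly the remainder terms $C(x,t)$, $C_{3}$, $C_{4}$ that the paper's Lemmas \ref{LVV} and \ref{YR} estimate away), and a martingale problem for $X^{\epsilon}$ alone sees quadratic variation generated only by the common noise $B^{k}$. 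In the paper the second-order term is instead the law-of-large-numbers \emph{average} of the kinetic pressure: the term $-\nabla_{x}\cdot[\rho^{\epsilon}\mathbb{E}^{x}(V^{\epsilon}\otimes V^{\epsilon})]$ in the momentum equation, after Duhamel averaging, contributes $\nabla_{x}\cdot\big(\frac{1}{2}\sum_{k}\|\sigma_{k}\|^{2}\gamma^{-2}\rho I_{d\times d}\big)$ to $Y^{*}$ in (\ref{EEN}) (the $Z_{2}^{\epsilon}$ computation), deterministically given $\rho$; the independent Brownian motion $W$ is introduced only afterwards, as the representation of this second-order term when the limiting SPDE for the conditional law $\rho_{t}$ is rewritten as the conditional McKean--Vlasov SDE (\ref{main-equ:2.2}), and the convergence in distribution of $X^{\epsilon,\mu^{\epsilon}}$ is read off through that correspondence rather than identified directly on path space. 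So to reach the stated limit you must keep the averaged pressure divergence in the $\rho$-equation (as your moment hierarchy in fact allows), rather than route the extra diffusion through a fluctuation CLT.
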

\begin{remark}
The solution of equation (\ref{main2}) is actually a conditional probability distribution of $X_{t}$ satisfying equation (\ref{main-equ:2.2}) given the environmental noise $\{B_{t}^{k}\}_{k\in \mathbb{N}}$. Here by the conditional distribution $\rho_{t}$ of $X_{t}$ given $\mathcal{F}_{t}^{B}$, we mean that 
\begin{eqnarray*}
\mathbb{E}[X_{t}\in dx|\mathcal{F}_{t}^{B}]=\rho_{t}(dx).
\end{eqnarray*}
\end{remark}

\section{An averaging result: $\epsilon\to0$}\label{Aver}
  \setcounter{equation}{0}
  \renewcommand{\theequation}
{3.\arabic{equation}}
We need some uniform estimates on $X_{t}^{\epsilon,\mu^{\epsilon}}$ which is the solution of the equation (\ref{MEL}).
\begin{lemma}\label{SUB}
Assume that $(\mathbf{H_{1}})$-$(\mathbf{H_{3}})$ hold, then for every $T>0$, there exists a constant $C_{T}>0$ such that 
\begin{eqnarray*}
\mathbb{E}\sup_{0\leq t\leq T}\|X_{t}^{\epsilon,\mu^{\epsilon}}\|^{2}\leq C_{T}(1+\mathbb{E}\|X_{0}\|^{2}+\mathbb{E}\|V_{0}\|^{2}).
\end{eqnarray*}
\end{lemma}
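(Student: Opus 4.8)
The difficulty is that $V^{\epsilon}_{t}$ is of order $\epsilon^{-1/2}$, so the naive bound $\|X^{\epsilon}_{t}\|^{2}\le 2\|X_{0}\|^{2}+2T\int_{0}^{t}\|V^{\epsilon}_{s}\|^{2}\,ds$ would only produce an estimate of order $\epsilon^{-1}$. The plan is to remove this loss by integrating by parts in the velocity equation with the corrector $\epsilon\,\gamma(X^{\epsilon}_{t})^{-1}V^{\epsilon}_{t}$, which exhibits $\int_{0}^{t}V^{\epsilon}_{s}\,ds$ as a combination of quantities that remain $O(1)$ as $\epsilon\to 0$ (the device used in the Smoluchowski--Kramers literature, e.g.~\cite{HMVW}). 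All computations are carried out up to the localizing stopping times $\tau_{n}=\inf\{t\ge 0:\|X^{\epsilon}_{t}\|+\|V^{\epsilon}_{t}\|\ge n\}$, so that every stochastic integral below is a genuine martingale and all expectations are finite; the constants produced are independent of $n$, so the localization is removed at the end by monotone convergence (which also yields non-explosion). By the Remark of Section~\ref{Pre} together with $(\mathbf{H_{3}})$, the Stratonovich integral in (\ref{MEL}) may be read as an It\^o integral, and I restrict to $0<\epsilon\le 1$ (the regime relevant to the small mass limit).

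\emph{Step 1 (energy estimates for the velocity).} Apply It\^o's formula to $\tfrac{\epsilon}{2}\|V^{\epsilon}_{t}\|^{2}$ and use the coercivity $\gamma(x)\ge\gamma_{0}>0$ of $(\mathbf{H_{2}})$, the bound $\sum_{k}\|\sigma_{k}(x)\|^{2}\le L_{\sigma}^{2}$ of $(\mathbf{H_{3}})$, the linear growth $\|K*\mu^{\epsilon}_{s}(X^{\epsilon}_{s})\|\le \|K(0)\|+L_{K}\|X^{\epsilon}_{s}\|+L_{K}\int_{\mathbb{R}^{2d}}\|y\|\,\mu^{\epsilon}_{s}(dy,dv)$ from $(\mathbf{H_{1}})$, Young's inequality, and the identity $\mathbb{E}\int_{\mathbb{R}^{2d}}\|y\|^{2}\mu^{\epsilon}_{s}(dy,dv)=\mathbb{E}\|X^{\epsilon}_{s}\|^{2}$, valid because $(X_{0},V_{0})$ has law $\mu_{0}$ and $\mu^{\epsilon}_{s}=(X^{\epsilon}_{s},V^{\epsilon}_{s})\#\mu_{0}$. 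This gives
\[
\mathbb{E}\int_{0}^{T}\|V^{\epsilon}_{s}\|^{2}\,ds\ \le\ \frac{C_{T}}{\epsilon}\Big(1+\mathbb{E}\|V_{0}\|^{2}+\int_{0}^{T}\mathbb{E}\|X^{\epsilon}_{s}\|^{2}\,ds\Big).
\]
Taking the supremum in $t$ and then expectations, and bounding $\mathbb{E}\sup_{t\le T}|M_{t}|$ for the martingale part $M_{t}=\sum_{k}\int_{0}^{t}\langle V^{\epsilon}_{s},\sigma_{k}(X^{\epsilon}_{s})\rangle\,dB^{k}_{s}$ (whose bracket is at most $L_{\sigma}^{2}\int_{0}^{T}\|V^{\epsilon}_{s}\|^{2}ds$) by Burkholder--Davis--Gundy, one gets
\[
\epsilon^{2}\,\mathbb{E}\sup_{t\le T}\|V^{\epsilon}_{t}\|^{2}\ \le\ C_{T}\Big(1+\mathbb{E}\|V_{0}\|^{2}+\int_{0}^{T}\mathbb{E}\sup_{r\le s}\|X^{\epsilon}_{r}\|^{2}\,ds\Big).
\]
Squaring the energy identity and using that its leading term $\tfrac{L_{\sigma}^{2}T}{2\epsilon}$ is deterministic yields, in the same manner, a uniform-in-$\epsilon$ bound for $\mathbb{E}\big(\epsilon\int_{0}^{T}\|V^{\epsilon}_{s}\|^{2}ds\big)^{2}$ of the same structure (after, if needed, a preliminary $\epsilon$-dependent fourth-moment bound for $X^{\epsilon}$); this is the heaviest of the three.

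\emph{Step 2 (corrector identity and Gr\"onwall).} Put $G(x)=\gamma(x)^{-1}$; by $(\mathbf{H_{2}})$, $\gamma_{1}^{-1}\le G\le\gamma_{0}^{-1}$ and $\|\nabla G\|\le L_{\gamma}\gamma_{0}^{-2}$. Since $X^{\epsilon}$ has finite variation, It\^o's formula for $\epsilon G(X^{\epsilon}_{t})V^{\epsilon}_{t}$ together with (\ref{MEL}) and $G\gamma\equiv 1$ gives, after integrating and rearranging,
\[
\begin{aligned}
X^{\epsilon}_{t}={}&X_{0}+\epsilon G(X_{0})V_{0}-\epsilon G(X^{\epsilon}_{t})V^{\epsilon}_{t}+\epsilon\int_{0}^{t}(\nabla G(X^{\epsilon}_{s})\cdot V^{\epsilon}_{s})V^{\epsilon}_{s}\,ds\\
&+\int_{0}^{t}G(X^{\epsilon}_{s})\,K*\mu^{\epsilon}_{s}(X^{\epsilon}_{s})\,ds+\sum_{k=1}^{\infty}\int_{0}^{t}G(X^{\epsilon}_{s})\sigma_{k}(X^{\epsilon}_{s})\,dB^{k}_{s}.
\end{aligned}
\]
Set $\Phi(t)=\mathbb{E}\sup_{r\le t}\|X^{\epsilon}_{r}\|^{2}$. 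Taking $\sup_{r\le t}\|\cdot\|^{2}$ and then $\mathbb{E}$ of the six terms: the first two give $C(\mathbb{E}\|X_{0}\|^{2}+\mathbb{E}\|V_{0}\|^{2})$; the term $\epsilon G(X^{\epsilon}_{t})V^{\epsilon}_{t}$ is dominated by $\gamma_{0}^{-2}\epsilon^{2}\mathbb{E}\sup_{t\le T}\|V^{\epsilon}_{t}\|^{2}$ from Step~1; the corrector drift by $L_{\gamma}^{2}\gamma_{0}^{-4}\,\mathbb{E}\big(\epsilon\int_{0}^{T}\|V^{\epsilon}_{s}\|^{2}ds\big)^{2}$ from Step~1; the $K*\mu^{\epsilon}$ term by $C_{T}\int_{0}^{t}(1+\mathbb{E}\|X^{\epsilon}_{s}\|^{2})\,ds$ using $(\mathbf{H_{1}})$; and the stochastic integral by $C\,\mathbb{E}\sum_{k}\int_{0}^{T}\|G(X^{\epsilon}_{s})\sigma_{k}(X^{\epsilon}_{s})\|^{2}\,ds\le C_{T}$ via Doob's inequality and $(\mathbf{H_{3}})$. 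Since each occurrence of $X^{\epsilon}$ on the right enters either through $\int_{0}^{t}\Phi(s)\,ds$ or through the Step~1 bounds (which again contain $\int_{0}^{s}\Phi(r)\,dr$), one obtains $\Phi(t)\le C_{T}\big(1+\mathbb{E}\|X_{0}\|^{2}+\mathbb{E}\|V_{0}\|^{2}\big)+C_{T}\int_{0}^{t}\Phi(s)\,ds$, whence Gr\"onwall's lemma and then $n\to\infty$ finish the proof.

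\emph{Main obstacle.} Everything reduces to Step~1, and within it to controlling $\epsilon\int_{0}^{T}\|V^{\epsilon}_{s}\|^{2}\,ds$ in $L^{2}(\Omega)$ uniformly in $\epsilon$ — this is exactly what the corrector drift term $\epsilon\int_{0}^{t}(\nabla G\cdot V^{\epsilon}_{s})V^{\epsilon}_{s}\,ds$ demands, and, unlike the other terms, it is not delivered by one application of the $\tfrac{\epsilon}{2}\|V^{\epsilon}\|^{2}$ energy balance but requires squaring it, the key point being that the noise input is deterministic of the exactly compensating size $\epsilon^{-1}$. (It may be noted that this is the very term that, in the limit $\epsilon\to 0$, produces the drift correction $\tfrac12\sum_{k}\|\sigma_{k}(x)\|^{2}\nabla_{x}\gamma(x)\,\gamma(x)^{-3}$ in (\ref{main-equ:2.2}).)
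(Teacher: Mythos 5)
Your corrector identity is correct and the overall strategy (the device of \cite{HMVW}) is legitimate, but the proof does not go through as written, because the decisive estimate of your Step~1 --- the uniform-in-$\epsilon$ bound on $\mathbb{E}\big(\epsilon\int_0^T\|V^{\epsilon}_s\|^2ds\big)^2$ --- is only asserted, and the route you indicate for it is not available under the standing assumptions. Squaring the energy identity produces, among other terms, $\epsilon^4\,\mathbb{E}\|V_0\|^4$ and $\epsilon^2\,\mathbb{E}\big(\int_0^T(1+\|X_s^{\epsilon,\mu^{\epsilon}}\|^2+(\mathbb{E}[\|X_s^{\epsilon,\mu^{\epsilon}}\|\,|\,\mathcal{F}_s^{B}])^2)\,ds\big)^2$, i.e.\ fourth moments of $V_0$ and of $X_s^{\epsilon,\mu^{\epsilon}}$; your parenthetical ``preliminary $\epsilon$-dependent fourth-moment bound for $X^{\epsilon}$'' concedes exactly this, and any such preliminary bound in turn requires $\mathbb{E}\|X_0\|^4+\mathbb{E}\|V_0\|^4<\infty$. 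The hypotheses $(\mathbf{H_{1}})$--$(\mathbf{H_{3}})$ (and $(\mathbf{H_{4}})$, which only places $\mu_0$ in $\mathcal{P}_2$) give second moments only, and the lemma's conclusion is supposed to depend on $1+\mathbb{E}\|X_0\|^2+\mathbb{E}\|V_0\|^2$ alone; so even if those fourth moments were finite, your constant would not have the claimed form. There is also a closure problem: after squaring, the $\|X_s\|^4$ contribution can only be re-expressed through the very quantity $\mathbb{E}\big(\epsilon\int_0^T\|V^{\epsilon}_s\|^2ds\big)^2$ being estimated, with a prefactor of order $T^2$ that is not small, so the Gr\"onwall loop for the squared quantity does not obviously close either. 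In short, the step you yourself call ``the heaviest of the three'' is a genuine gap, not a routine verification.

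For comparison, the paper avoids quadratic-in-$V$ terms altogether: it solves the velocity equation pathwise with the integrating factor $Y_t^{\epsilon,\mu^{\epsilon}}=e^{-\frac{1}{\epsilon}\int_0^t\gamma(X_s^{\epsilon,\mu^{\epsilon}})ds}$, substitutes into $X_t^{\epsilon,\mu^{\epsilon}}=X_0+\int_0^tV_s^{\epsilon,\mu^{\epsilon}}ds$, exchanges the order of integration so that the kernel $\frac{1}{\epsilon}\int_u^te^{-\frac{\gamma_0}{\epsilon}(s-u)}ds\le\gamma_0^{-1}$ absorbs the $1/\epsilon$ factors deterministically, and handles the noise contribution with a single Burkholder--Davis--Gundy application; only second moments ever appear and Gr\"onwall closes directly. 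If you wish to keep the corrector route, you must either add fourth-moment assumptions on the initial data (which changes the statement), or treat $\epsilon\int_0^t(\nabla G(X_s^{\epsilon,\mu^{\epsilon}})\cdot V_s^{\epsilon,\mu^{\epsilon}})V_s^{\epsilon,\mu^{\epsilon}}ds$ by first representing $V^{\epsilon,\mu^{\epsilon}}$ through the damped Duhamel formula --- at which point you are essentially reconstructing the paper's argument.
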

\begin{proof}
Rewrite the equation (\ref{MEL}) as 
\begin{eqnarray}
&&dX_{t}^{\epsilon,\mu^{\epsilon}}=V_{t}^{\epsilon,\mu^{\epsilon}}dt\label{equ:main11}\\
&&dV_{t}^{\epsilon,\mu^{\epsilon}}=-\frac{1}{\epsilon}\gamma(X_{t}^{\epsilon,\mu^{\epsilon}})V_{t}^{\epsilon,\mu^{\epsilon}}dt+\frac{1}{\epsilon}(K\ast\mu_{t}^{\epsilon})(X_{t}^{\epsilon,\mu^{\epsilon}})dt+\frac{1}{\epsilon}\sum_{k=1}^{\infty}\sigma_{k}(X_{t}^{\epsilon,\mu^{\epsilon}})dB_{t}^{k}.\nonumber\label{equ:main12}
\end{eqnarray}
Introduce a process $Y_{t}^{\epsilon,\mu^{\epsilon}}$,
\begin{eqnarray}\label{equ:Y1}
\frac{dY_{t}^{\epsilon,\mu^{\epsilon}}}{dt}=-\frac{1}{\epsilon}\gamma(X_{t}^{\epsilon,\mu^{\epsilon}})Y_{t}^{\epsilon,\mu^{\epsilon}}, \quad Y_{0}^{\epsilon,\mu^{\epsilon}}=1.
\end{eqnarray}
Then
\begin{eqnarray*}
d[(Y_{t}^{\epsilon,\mu^{\epsilon}})^{-1}V_{t}^{\epsilon,\mu^{\epsilon}}]
&=&\frac{1}{\epsilon}(Y_{t}^{\epsilon,\mu^{\epsilon}})^{-1}(K\ast\mu_{t}^{\epsilon})(X_{t}^{\epsilon,\mu^{\epsilon}})dt+\frac{1}{\epsilon}\sum_{k=1}^{\infty}\sigma_{k}(X_{t}^{\epsilon,\mu^{\epsilon}})dB_{t}^{k}.
\end{eqnarray*}
Integrating from $0$ to $t$ yields
\begin{eqnarray*}
(Y_{t}^{\epsilon,\mu^{\epsilon}})^{-1}V_{t}^{\epsilon,\mu^{\epsilon}}=V_{0}+\frac{1}{\epsilon}\int_{0}^{t}(Y_{s}^{\epsilon,\mu^{\epsilon}})^{-1}(K\ast\mu_{s}^{\epsilon})(X_{s}^{\epsilon,\mu^{\epsilon}})ds+\frac{1}{\epsilon}\sum_{k=1}^{\infty}\int_{0}^{t}(Y_{s}^{\epsilon,\mu^{\epsilon}})^{-1}\sigma_{k}(X_{s}^{\epsilon,\mu^{\epsilon}})dB_{s}^{k},
\end{eqnarray*}
and
\begin{eqnarray}\label{VF}
V_{t}^{\epsilon,\mu^{\epsilon}}&=&Y_{t}^{\epsilon,\mu^{\epsilon}}V_{0}+\frac{1}{\epsilon}\int_{0}^{t}Y_{t}^{\epsilon,\mu^{\epsilon}}(Y_{s}^{\epsilon,\mu^{\epsilon}})^{-1}(K\ast\mu_{s}^{\epsilon})(X_{s}^{\epsilon,\mu^{\epsilon}})ds\nonumber\\
&&+\frac{1}{\epsilon}\sum_{k=1}^{\infty}\int_{0}^{t}Y_{t}^{\epsilon,\mu^{\epsilon}}(Y_{s}^{\epsilon,\mu^{\epsilon}})^{-1}\sigma_{k}(X_{s}^{\epsilon,\mu^{\epsilon}})dB_{s}^{k},
\end{eqnarray}
together with (\ref{equ:main11}),
\begin{eqnarray}\label{equ:X1}
X_{t}^{\epsilon,\mu^{\epsilon}}&=&X_{0}+\int_{0}^{t}Y_{s}^{\epsilon,\mu^{\epsilon}}V_{0}ds+\frac{1}{\epsilon}\int_{0}^{t}\int_{0}^{s}Y_{s}^{\epsilon,\mu^{\epsilon}}(Y_{u}^{\epsilon,\mu^{\epsilon}})^{-1}(K\ast\mu_{u}^{\epsilon})(X_{u}^{\epsilon,\mu^{\epsilon}})duds\nonumber\\
&&+\frac{1}{\epsilon}\sum_{k=1}^{\infty}\int_{0}^{t}\int_{0}^{s}Y_{s}^{\epsilon,\mu^{\epsilon}}(Y_{u}^{\epsilon,\mu^{\epsilon}})^{-1}\sigma_{k}(X_{u}^{\epsilon,\mu^{\epsilon}})dB_{u}^{k}ds\nonumber\\
&\triangleq& X_{0}+\sum_{j=1}^{3}I_{j}^{\epsilon}(t).
\end{eqnarray}
Next we estimate these terms respectively.
Let
\begin{eqnarray*}
\psi^{\epsilon}(t)=Y_{t}^{\epsilon,\mu^{\epsilon}}(Y_{s}^{\epsilon,\mu^{\epsilon}})^{-1}(K\ast\mu_{s}^{\epsilon})(X_{s}^{\epsilon,\mu^{\epsilon}}),
\end{eqnarray*}
then
\begin{eqnarray*}
\psi^{\epsilon}(s)=(K\ast\mu_{s}^{\epsilon})(X_{s}^{\epsilon,\mu^{\epsilon}}),
\end{eqnarray*}
and 
\begin{eqnarray*}
\frac{d\psi^{\epsilon}(t)}{dt}&=&\frac{dY_{t}^{\epsilon,\mu^{\epsilon}}}{dt}(Y_{s}^{\epsilon,\mu^{\epsilon}})^{-1}(K\ast\mu_{s}^{\epsilon})(X_{s}^{\epsilon,\mu^{\epsilon}})\\
&=&-\frac{1}{\epsilon}\gamma(X_{t}^{\epsilon,\mu^{\epsilon}})\psi^{\epsilon}(t).
\end{eqnarray*}
Solving it, we have
\begin{eqnarray*}
\psi^{\epsilon}(t)&=&\psi^{\epsilon}(s)e^{-\frac{1}{\epsilon}\int_{s}^{t}\gamma(X_{u}^{\epsilon,\mu^{\epsilon}})du}\\
&=&(K\ast\mu_{s}^{\epsilon})(X_{s}^{\epsilon,\mu^{\epsilon}})e^{-\frac{1}{\epsilon}\int_{s}^{t}\gamma(X_{u}^{\epsilon,\mu^{\epsilon}})du},
\end{eqnarray*}
and by $(\mathbf{H_{2}})$,
\begin{eqnarray*}
\|\psi^{\epsilon}(t)\|&\leq& \|(K\ast\mu_{s}^{\epsilon})(X_{s}^{\epsilon,\mu^{\epsilon}})\|e^{-\frac{1}{\epsilon}\int_{s}^{t}\gamma(X_{u}^{\epsilon,\mu^{\epsilon}})du}\\
&\leq&\|(K\ast\mu_{s}^{\epsilon})(X_{s}^{\epsilon,\mu^{\epsilon}})\|e^{-\frac{1}{\epsilon}\gamma_{0}(t-s)}.
\end{eqnarray*}
Furthermore, by (\ref{equ:Y1}) and $(\mathbf{H_{2}})$,
\begin{eqnarray}\label{YB}
Y_{t}^{\epsilon,\mu^{\epsilon}}=e^{-\frac{1}{\epsilon}\int_{0}^{t}\gamma(X_{s}^{\epsilon,\mu^{\epsilon}})ds},
\end{eqnarray}
and
\begin{eqnarray}\label{YY1}
\|Y_{t}^{\epsilon,\mu^{\epsilon}}V_{0}\|\leq \|V_{0}\|e^{-\frac{1}{\epsilon}\gamma_{0}t},\nonumber\\
|Y_{t}^{\epsilon,\mu^{\epsilon}}(Y_{s}^{\epsilon,\mu^{\epsilon}})^{-1}|\leq e^{-\frac{1}{\epsilon}\gamma_{0}(t-s)}.
\end{eqnarray}
Then for $I_{1}^{\epsilon}(t)$, we have
\begin{eqnarray}\label{equ:XE1}
\|I_{1}^{\epsilon}(t)\|\leq \int_{0}^{t}\|Y_{s}^{\epsilon,\mu^{\epsilon}}V_{0}\|ds\leq \|V_{0}\|\int_{0}^{t}e^{-\frac{1}{\epsilon}\gamma_{0}s}ds\leq \frac{\epsilon}{\gamma_{0}}\|V_{0}\|.
\end{eqnarray}
For $I_{2}^{\epsilon}(t)$, exchanging the order of integrations and together with $(\mathbf{H_{2}})$,
\begin{eqnarray}\label{equ:XE2}
\|I_{2}^{\epsilon}(t)\|&\leq& \frac{1}{\epsilon}\int_{0}^{t}\int_{0}^{s}\|Y_{s}^{\epsilon,\mu^{\epsilon}}(Y_{u}^{\epsilon,\mu^{\epsilon}})^{-1}(K\ast\mu_{u}^{\epsilon})(X_{u}^{\epsilon,\mu^{\epsilon}})\|duds\nonumber\\
&\leq&\frac{1}{\epsilon}\int_{0}^{t}\|(K\ast\mu_{u}^{\epsilon})(X_{u}^{\epsilon,\mu^{\epsilon}})\|\int_{u}^{t}e^{-\frac{1}{\epsilon}\gamma_{0}(s-u)}dsdu\nonumber\\
&\leq&\frac{1}{\gamma_{0}}\int_{0}^{t}\|(K\ast\mu_{u}^{\epsilon})(X_{u}^{\epsilon,\mu^{\epsilon}})\|du.
\end{eqnarray}
Note that
\begin{eqnarray}\label{equ:K1}
\|(K\ast\mu_{s}^{\epsilon})(X_{s}^{\epsilon,\mu^{\epsilon}})\|&=&\Big\|\int_{\mathbb{R}^{2d}}K(X_{s}^{\epsilon,\mu^{\epsilon}}-x)d\mu_{s}^{\epsilon}(x,v)\Big\|\nonumber\\
&\leq&\int_{\mathbb{R}^{2d}}\|K(X_{s}^{\epsilon,\mu^{\epsilon}}-x)-K(0)\|d\mu_{s}^{\epsilon}(x,v)+\int_{\mathbb{R}^{2d}}\|K(0)\|d\mu_{s}^{\epsilon}(x,v)\nonumber\\
&\leq& C_{L_{K}}(1+\|X_{s}^{\epsilon,\mu^{\epsilon}}\|+\mathbb{E}[\|X_{s}^{\epsilon,\mu^{\epsilon}}\||\mathcal{F}_{s}^{B}]).
\end{eqnarray}
Then combining (\ref{equ:XE2}) with (\ref{equ:K1}),
\begin{eqnarray*}
\|I_{2}^{\epsilon}(t)\|&\leq&C_{L_{K},\gamma_{0}}\int_{0}^{t}(1+\|X_{s}^{\epsilon,\mu^{\epsilon}}\|+\mathbb{E}[\|X_{s}^{\epsilon,\mu^{\epsilon}}\||\mathcal{F}_{s}^{B}])ds.
\end{eqnarray*}
Moreover,
\begin{eqnarray}\label{equ:XE2-1}
\mathbb{E}\sup_{0\leq t\leq T}\|I_{2}^{\epsilon}(t)\|^{2}&\leq& C_{L_{K},\gamma_{0},T}+C_{L_{K},\gamma_{0},T}\int_{0}^{T}\mathbb{E}\sup_{0\leq s\leq t}\|X_{s}^{\epsilon,\mu^{\epsilon}}\|^{2}dt.
\end{eqnarray}
Note that, by (\ref{YY1}) and $(\mathbf{H_{2}})$,
\begin{eqnarray*}
\frac{1}{\epsilon}\int_{u}^{t}|Y_{s}^{\epsilon,\mu^{\epsilon}}(Y_{u}^{\epsilon,\mu^{\epsilon}})^{-1}|ds\leq \frac{1}{\epsilon}\int_{u}^{t}e^{-\frac{1}{\epsilon}\gamma_{0}(s-u)}ds
\leq\frac{1}{\gamma_{0}},
\end{eqnarray*}
then for $I_{3}^{\epsilon}(t)$, exchanging the order of integrations, utilizing the Burkh${\rm\ddot{o}}$lder-Davies-Gundy inequality as well as $(\mathbf{H_{3}})$,
\begin{eqnarray}\label{equ:XE3}
\mathbb{E}\sup_{0\leq t\leq T}\|I_{3}^{\epsilon}(t)\|^{2}
&=&\mathbb{E}\sup_{0\leq t\leq T}\Big\|\frac{1}{\epsilon}\sum_{k=1}^{\infty}\int_{0}^{t}\int_{u}^{t}e^{-\frac{1}{\epsilon}\int_{u}^{s}\gamma(X_{r}^{\epsilon,\mu^{\epsilon}})dr} ds\sigma_{k}(X_{u}^{\epsilon,\mu^{\epsilon}})dB_{u}^{k}\Big\|^{2}\nonumber\\
&\leq&4\mathbb{E}\int_{0}^{T}\sup_{u}\sum_{k=1}^{\infty}\|\sigma_{k}(u)\|^{2}\Big(\frac{1}{\epsilon}\int_{u}^{t}e^{-\frac{1}{\epsilon}\int_{u}^{s}\gamma(X_{r}^{\epsilon,\mu^{\epsilon}})dr} ds\Big)^{2}du\nonumber\\
&\leq&C_{T,\gamma_{0},L_{\sigma}}.
\end{eqnarray}
Combining (\ref{equ:X1}) with (\ref{equ:XE1}), (\ref{equ:XE2}) and (\ref{equ:XE3}),
\begin{eqnarray*}
\mathbb{E}\sup_{0\leq t\leq T}\|X_{t}^{\epsilon,\mu^{\epsilon}}\|^{2}&\leq&C_{T,\gamma_{0},L_{\sigma},L_{K}}(1+\mathbb{E}\|V_{0}\|^{2})+C_{T,\gamma_{0},L_{\sigma},L_{K}}\int_{0}^{T}\mathbb{E}\sup_{0\leq s\leq t}\|X_{s}^{\epsilon,\mu^{\epsilon}}\|^{2}dt,
\end{eqnarray*}
Gronwall's inequality yields
\begin{eqnarray*}
\mathbb{E}\sup_{0\leq t\leq T}\|X_{t}^{\epsilon,\mu^{\epsilon}}\|^{2}&\leq& C_{T,\gamma_{0},L_{\sigma},L_{K}}(1+\mathbb{E}\|V_{0}\|^{2}+\mathbb{E}\|X_{0}\|^{2}).
\end{eqnarray*}
\end{proof}

Next we show that $X_{t}^{\epsilon,\mu^{\epsilon}}$ is H${\rm\ddot{o}}$lder continuous. This together with Lemma \ref{SUB}, we then obtain the tightness of $X_{t}^{\epsilon,\mu^{\epsilon}}$ in $C(0,T;\mathbb{R}^{d})$~\cite{CSa}.
\begin{lemma}\label{SC}
Assume that $(\mathbf{H_{1}})$-$(\mathbf{H_{3}})$ hold, then for every $T>0$, there exists a constant $C_{T}$, such that for any $0\leq s\leq t\leq T$,
\begin{eqnarray*}
\mathbb{E}\|X_{t}^{\epsilon,\mu^{\epsilon}}-X_{s}^{\epsilon,\mu^{\epsilon}}\|^{2}\leq C_{T}|t-s|,
\end{eqnarray*}
where $C_{T}$ depends on other parameters $\gamma_{0}, L_{K}, L_{\sigma}$ and initial values.
\end{lemma}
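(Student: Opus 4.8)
\emph{Proof proposal.} The plan is to estimate the increment directly from the variation-of-constants representation (\ref{equ:X1}). Fix $0\le s\le t\le T$ and write $X_t^{\epsilon,\mu^\epsilon}-X_s^{\epsilon,\mu^\epsilon}=\sum_{j=1}^{3}\big(I_j^\epsilon(t)-I_j^\epsilon(s)\big)$; it suffices to bound $\mathbb{E}\|I_j^\epsilon(t)-I_j^\epsilon(s)\|^2\le C_T|t-s|$ for $j=1,2,3$ and sum. Two facts are used throughout: $(t-s)^2\le T(t-s)$, which promotes any bound of order $|t-s|^2$ to the claimed order $|t-s|$; and Lemma \ref{SUB} together with (\ref{equ:K1}), which bounds $\mathbb{E}\sup_{0\le r\le T}\|(K\ast\mu_r^\epsilon)(X_r^{\epsilon,\mu^\epsilon})\|^2$ by a constant depending only on $T,\gamma_0,L_K,L_\sigma$ and the initial data.

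The drift increments are routine. Since $I_1^\epsilon(t)-I_1^\epsilon(s)=\int_s^t Y_r^{\epsilon,\mu^\epsilon}V_0\,dr$, the bound (\ref{YY1}) gives $\|I_1^\epsilon(t)-I_1^\epsilon(s)\|\le\|V_0\|\,|t-s|$. For $I_2^\epsilon$ one has $I_2^\epsilon(t)-I_2^\epsilon(s)=\frac1\epsilon\int_s^t\!\int_0^r Y_r^{\epsilon,\mu^\epsilon}(Y_u^{\epsilon,\mu^\epsilon})^{-1}(K\ast\mu_u^\epsilon)(X_u^{\epsilon,\mu^\epsilon})\,du\,dr$; using $|Y_r^{\epsilon,\mu^\epsilon}(Y_u^{\epsilon,\mu^\epsilon})^{-1}|\le e^{-\gamma_0(r-u)/\epsilon}$ so that $\frac1\epsilon\int_0^r e^{-\gamma_0(r-u)/\epsilon}\,du\le\gamma_0^{-1}$, together with (\ref{equ:K1}), gives $\|I_2^\epsilon(t)-I_2^\epsilon(s)\|\le\gamma_0^{-1}|t-s|\,\sup_{0\le r\le T}\|(K\ast\mu_r^\epsilon)(X_r^{\epsilon,\mu^\epsilon})\|$. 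Squaring, taking expectations and invoking the two facts above then yields $\mathbb{E}\|I_1^\epsilon(t)-I_1^\epsilon(s)\|^2+\mathbb{E}\|I_2^\epsilon(t)-I_2^\epsilon(s)\|^2\le C_T|t-s|$.

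The essential work is the stochastic increment $I_3^\epsilon(t)-I_3^\epsilon(s)$. As in the derivation of (\ref{equ:XE3}) I exchange the order of integration, $I_3^\epsilon(t)=\frac1\epsilon\sum_{k=1}^{\infty}\int_0^t\big(\int_u^t Y_r^{\epsilon,\mu^\epsilon}(Y_u^{\epsilon,\mu^\epsilon})^{-1}\,dr\big)\sigma_k(X_u^{\epsilon,\mu^\epsilon})\,dB_u^k$, and split the difference at $u=s$ (using $\int_u^t-\int_u^s=\int_s^t$ for $u\le s$):
\begin{eqnarray*}
I_3^\epsilon(t)-I_3^\epsilon(s)&=&\frac1\epsilon\sum_{k=1}^{\infty}\int_0^s\Big(\int_s^t Y_r^{\epsilon,\mu^\epsilon}(Y_u^{\epsilon,\mu^\epsilon})^{-1}\,dr\Big)\sigma_k(X_u^{\epsilon,\mu^\epsilon})\,dB_u^k\\
&&+\ \frac1\epsilon\sum_{k=1}^{\infty}\int_s^t\Big(\int_u^t Y_r^{\epsilon,\mu^\epsilon}(Y_u^{\epsilon,\mu^\epsilon})^{-1}\,dr\Big)\sigma_k(X_u^{\epsilon,\mu^\epsilon})\,dB_u^k\ =:\ A^\epsilon+B^\epsilon.
\end{eqnarray*}
For $B^\epsilon$ the inner factor obeys $\int_u^t Y_r^{\epsilon,\mu^\epsilon}(Y_u^{\epsilon,\mu^\epsilon})^{-1}\,dr\le\int_u^t e^{-\gamma_0(r-u)/\epsilon}\,dr\le\epsilon/\gamma_0$, so the coefficient of $\sigma_k(X_u^{\epsilon,\mu^\epsilon})\,dB_u^k$ is bounded by the deterministic constant $\gamma_0^{-1}$, and the It\^o isometry with $(\mathbf{H_{3}})$ gives $\mathbb{E}\|B^\epsilon\|^2\le\gamma_0^{-2}L_\sigma^2|t-s|$. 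For $A^\epsilon$, for $u\le s\le r$ we have $\int_u^r\gamma(X_\ell^{\epsilon,\mu^\epsilon})\,d\ell\ge\gamma_0(s-u)+\gamma_0(r-s)$ by $(\mathbf{H_{2}})$, whence $\int_s^t Y_r^{\epsilon,\mu^\epsilon}(Y_u^{\epsilon,\mu^\epsilon})^{-1}\,dr\le e^{-\gamma_0(s-u)/\epsilon}\int_s^t e^{-\gamma_0(r-s)/\epsilon}\,dr\le e^{-\gamma_0(s-u)/\epsilon}\min\{|t-s|,\epsilon/\gamma_0\}$; the It\^o isometry then gives
\begin{eqnarray*}
\mathbb{E}\|A^\epsilon\|^2&\le&\frac{L_\sigma^2}{\epsilon^2}\min\{|t-s|^2,\epsilon^2/\gamma_0^2\}\int_0^s e^{-2\gamma_0(s-u)/\epsilon}\,du\\
&\le&\frac{L_\sigma^2}{2\gamma_0\epsilon}\min\{|t-s|^2,\epsilon^2/\gamma_0^2\}\ \le\ \frac{L_\sigma^2}{2\gamma_0^2}|t-s|,
\end{eqnarray*}
the last step using $\min\{|t-s|^2,\epsilon^2/\gamma_0^2\}\le(\epsilon/\gamma_0)|t-s|$. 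Collecting the four bounds gives $\mathbb{E}\|X_t^{\epsilon,\mu^\epsilon}-X_s^{\epsilon,\mu^\epsilon}\|^2\le C_T|t-s|$.

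The term $A^\epsilon$ is where the real obstacle sits. The naive route $X_t^{\epsilon,\mu^\epsilon}-X_s^{\epsilon,\mu^\epsilon}=\int_s^t V_r^{\epsilon,\mu^\epsilon}\,dr$ followed by the Cauchy--Schwarz inequality would require $\sup_{0\le r\le T}\mathbb{E}\|V_r^{\epsilon,\mu^\epsilon}\|^2$ to be bounded uniformly in $\epsilon$, but (as one reads off from (\ref{VF}), or from It\^o's formula applied to $\|V_r^{\epsilon,\mu^\epsilon}\|^2$) this is only of order $\epsilon^{-1}$ — the velocity fluctuates on the scale $\epsilon^{-1/2}$ — so that route yields the useless bound $O(\epsilon^{-1})|t-s|^2$. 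The point is that in $A^\epsilon$ the factor $e^{-\gamma_0(s-u)/\epsilon}$ confines the effective $du$-range to a window of length $\sim\epsilon$, which converts the dangerous $\epsilon^{-1}$ into an extra $\epsilon$ that, combined with $\min\{|t-s|^2,\epsilon^2\}\le\epsilon|t-s|$, restores a bound linear in $|t-s|$ and uniform in $\epsilon$. A minor point, implicit already in (\ref{equ:XE3}): the random factors $\int_s^t Y_r^{\epsilon,\mu^\epsilon}(Y_u^{\epsilon,\mu^\epsilon})^{-1}\,dr$ and $\int_u^t Y_r^{\epsilon,\mu^\epsilon}(Y_u^{\epsilon,\mu^\epsilon})^{-1}\,dr$ multiplying $\sigma_k(X_u^{\epsilon,\mu^\epsilon})\,dB_u^k$ are dominated pathwise by deterministic functions of $u$ thanks to $(\mathbf{H_{2}})$, which is what legitimizes the second-moment estimates above.
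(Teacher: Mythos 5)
Your proposal is correct and follows essentially the same route as the paper's proof: the same variation-of-constants representation of $X_{t}^{\epsilon,\mu^{\epsilon}}$, the same three-term decomposition of the increment (initial-velocity term, interaction term, stochastic term), and the same exchange of integration order with a split of the stochastic part at $u=s$. The only difference is minor: where the paper applies the mean value theorem to extract the factor $(t-s)$ from $\int_{s}^{t}Y_{u}^{\epsilon,\mu^{\epsilon}}(Y_{r}^{\epsilon,\mu^{\epsilon}})^{-1}du$, you bound this integral directly by $e^{-\gamma_{0}(s-r)/\epsilon}\min\{|t-s|,\epsilon/\gamma_{0}\}$ and use $\min\{|t-s|^{2},\epsilon^{2}/\gamma_{0}^{2}\}\leq(\epsilon/\gamma_{0})|t-s|$, which yields the same $O(|t-s|)$ bound in a slightly cleaner way.
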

\begin{proof}
For any $0\leq s\leq t\leq T$, by (\ref{VF}), we have
\begin{eqnarray}\label{RZ}
&&\mathbb{E}\|X_{t}^{\epsilon,\mu^{\epsilon}}-X_{s}^{\epsilon,\mu^{\epsilon}}\|^{2}=\mathbb{E}\Big\|\int_{s}^{t}V_{u}^{\epsilon,\mu^{\epsilon}}du\Big\|^{2}\nonumber\\
&&\leq 3\mathbb{E}\Big\|\int_{s}^{t}Y_{u}^{\epsilon,\mu^{\epsilon}}V_{0}du\Big\|^{2}+3\mathbb{E}\Big\|\frac{1}{\epsilon}\int_{s}^{t}\int_{0}^{u}Y_{u}^{\epsilon,\mu^{\epsilon}}(Y_{r}^{\epsilon,\mu^{\epsilon}})^{-1}(K\ast\mu_{r}^{\epsilon})(X_{r}^{\epsilon,\mu^{\epsilon}})drdu\Big\|^{2}\nonumber\\
&&\quad+3\mathbb{E}\Big\|\frac{1}{\epsilon}\sum_{k=1}^{\infty}\int_{s}^{t}\int_{0}^{u}Y_{u}^{\epsilon,\mu^{\epsilon}}(Y_{r}^{\epsilon,\mu^{\epsilon}})^{-1}\sigma_{k}(X_{r}^{\epsilon,\mu^{\epsilon}})dB_{r}^{k}du\Big\|^{2}\nonumber\\
&&\triangleq R_{1}^{\epsilon}(t)+R_{2}^{\epsilon}(t)+R_{3}^{\epsilon}(t).
\end{eqnarray}
Next we estimate the three terms respectively.

For $R_{1}^{\epsilon}(t)$, by $(\mathbf{H_{2}})$, (\ref{YB}) and the H${\rm\ddot{o}}$lder inequality, we have
\begin{eqnarray}\label{R1}
R_{1}^{\epsilon}(t)&\leq& 3\mathbb{E}\Big(\int_{s}^{t}e^{-\frac{1}{\epsilon}\int_{0}^{u}\gamma(X_{r}^{\epsilon,\mu^{\epsilon}})dr}\|V_{0}\|du\Big)^{2}\nonumber\\
&\leq& C_{\gamma_{0},T}|t-s|\cdot\mathbb{E}\|V_{0}\|^{2}.
\end{eqnarray}
For $R_{2}^{\epsilon}(t)$, by (\ref{YB}), $(\mathbf{H_{1}})$, $(\mathbf{H_{2}})$, (\ref{equ:K1}) and the H${\rm\ddot{o}}$lder inequality,
\begin{eqnarray}\label{R2}
R_{2}^{\epsilon}(t)
&\leq&\frac{C_{L_{K}}}{\epsilon^{2}}\mathbb{E}\int_{s}^{t}\Big(\int_{0}^{u}|Y_{u}^{\epsilon,\mu^{\epsilon}}(Y_{r}^{\epsilon,\mu^{\epsilon}})^{-1}|(1+\|X_{r}^{\epsilon,\mu^{\epsilon}}\|+\mathbb{E}[\|X_{r}^{\epsilon,\mu^{\epsilon}}\||\mathcal{F}_{r}^{B}])dr\Big)^{2}du\cdot |t-s|\nonumber\\
&\leq&\frac{C_{L_{K}}}{\epsilon^{2}}(1+\mathbb{E}\sup_{0\leq t\leq T}\|X_{t}^{\epsilon,\mu^{\epsilon}}\|^{2})\int_{s}^{t}\Big(\int_{0}^{u}e^{-\frac{1}{\epsilon}\gamma_{0}(u-r)}dr\Big)^{2}du\cdot |t-s|\nonumber\\
&\leq&C_{\gamma_{0},L_{K},T}|t-s|(1+\mathbb{E}\|X_{0}\|^{2}+\mathbb{E}\|V_{0}\|^{2}).
\end{eqnarray}
For $R_{3}^{\epsilon}(t)$, by the H${\rm\ddot{o}}$lder inequality, the mean value theorem of integrals, there exists $\theta\in[s,t]$ such that $\int_{s}^{t}Y_{u}^{\epsilon,\mu^{\epsilon}}(Y_{r}^{\epsilon,\mu^{\epsilon}})^{-1}du=Y_{\theta}^{\epsilon,\mu^{\epsilon}}(Y_{r}^{\epsilon,\mu^{\epsilon}})^{-1}(t-s)$ and exchanging the order of integrals, we obtain
\begin{eqnarray}\label{R3}
R_{3}^{\epsilon}(t)
&=&\frac{3}{\epsilon^{2}}\mathbb{E}\Big\|\sum_{k=1}^{\infty}\int_{0}^{t}\int_{r}^{t}Y_{u}^{\epsilon,\mu^{\epsilon}}(Y_{r}^{\epsilon,\mu^{\epsilon}})^{-1}\sigma_{k}(X_{r}^{\epsilon,\mu^{\epsilon}})dB_{r}^{k}du\nonumber\\
&&-\sum_{k=1}^{\infty}\int_{0}^{s}\int_{r}^{s}Y_{u}^{\epsilon,\mu^{\epsilon}}(Y_{r}^{\epsilon,\mu^{\epsilon}})^{-1}\sigma_{k}(X_{r}^{\epsilon,\mu^{\epsilon}})dB_{r}^{k}du\Big\|^{2}\nonumber\\
&\leq&\frac{6}{\epsilon^{2}}\mathbb{E}\Big\|\sum_{k=1}^{\infty}\int_{0}^{s}\Big(\int_{s}^{t}Y_{u}^{\epsilon,\mu^{\epsilon}}(Y_{r}^{\epsilon,\mu^{\epsilon}})^{-1}du\Big)\sigma_{k}(X_{r}^{\epsilon,\mu^{\epsilon}})dB_{r}^{k}\Big\|^{2}\nonumber\\
&&+\frac{6}{\epsilon^{2}}\mathbb{E}\Big\|\sum_{k=1}^{\infty}\int_{s}^{t}\int_{r}^{t}Y_{u}^{\epsilon,\mu^{\epsilon}}(Y_{r}^{\epsilon,\mu^{\epsilon}})^{-1}du\sigma_{k}(X_{r}^{\epsilon,\mu^{\epsilon}})dB_{r}^{k}\Big\|\nonumber\\
&\leq&\frac{6}{\epsilon^{2}}|t-s|^{2}\mathbb{E}\Big\|\sum_{k=1}^{\infty}\int_{0}^{s}Y_{\theta}^{\epsilon,\mu^{\epsilon}}(Y_{r}^{\epsilon,\mu^{\epsilon}})^{-1}\sigma_{k}(X_{r}^{\epsilon,\mu^{\epsilon}})dB_{r}^{k}\Big\|^{2}\nonumber\\
&&+\frac{24}{\epsilon^{2}}\mathbb{E}\sum_{k=1}^{\infty}\int_{s}^{t}\Big\|\int_{r}^{t}Y_{u}^{\epsilon,\mu^{\epsilon}}(Y_{r}^{\epsilon,\mu^{\epsilon}})^{-1}\sigma_{k}(X_{r}^{\epsilon,\mu^{\epsilon}})du\Big\|^{2}dr\nonumber\\
&\leq&\frac{24}{\epsilon^{2}}|t-s|^{2}\mathbb{E}\sum_{k=1}^{\infty}\int_{0}^{s}e^{-\frac{2}{\epsilon}\gamma_{0}(\theta-r)}\|\sigma_{k}(X_{r}^{\epsilon,\mu^{\epsilon}})\|^{2}dr\nonumber\\
&&+\frac{24}{\epsilon^{2}}\mathbb{E}\sum_{k=1}^{\infty}\int_{s}^{t}\Big(\int_{r}^{t}e^{-\frac{1}{\epsilon}\gamma_{0}(u-r)}\|\sigma_{k}(X_{r}^{\epsilon,\mu^{\epsilon}})\|du\Big)^{2}dr\nonumber\\
&\leq&C_{\gamma_{0},T}\sum_{k=1}^{\infty}\sup_{x}\|\sigma_{k}(x)\|^{2}\frac{1}{\epsilon}e^{-\frac{2}{\epsilon}\gamma_{0}(\theta-s)}\cdot|t-s|^{2}+C_{\gamma_{0},T}\sup_{x}\sum_{k=1}^{\infty}\|\sigma_{k}(x)\|^{2}|t-s|\nonumber\\
&\leq&C_{\gamma_{0},T,L_{\sigma}}|t-s|.
\end{eqnarray}
Then by (\ref{RZ}), (\ref{R1}), (\ref{R2}) and  (\ref{R3}), we obtain
\begin{eqnarray*}
\mathbb{E}\|X_{t}^{\epsilon,\mu^{\epsilon}}-X_{s}^{\epsilon,\mu^{\epsilon}}\|^{2}\leq C_{\gamma_{0},L_{K},L_{\sigma},T}|t-s|(1+\mathbb{E}\|X_{0}\|^{2}+\mathbb{E}\|V_{0}\|^{2}).
\end{eqnarray*}
\end{proof}

We also need estimate for $V_{t}^{\epsilon,\mu^{\epsilon}}$ with fixed $X_{t}^{\epsilon,\mu^{\epsilon}}=x, \mu_{t}^{\epsilon}=\mu$. For this, let $V_{t}^{\epsilon,\mu}$ satisfy the following SDEs,
\begin{eqnarray*}
\epsilon\dot{V}_{t}^{\epsilon,\mu,x}=-\gamma(x)V_{t}^{\epsilon,\mu,x}+ K*\mu(x)+\sum_{k=1}^{\infty}\sigma_{k}(x)\dot{B}_{t}^{k},
\end{eqnarray*}
then for $t\geq 0$, we have $\mathbb{E}^{x}\|V_{t}^{\epsilon,\mu}\|^{2}=\mathbb{E}\|V_{t}^{\epsilon,\mu,x}\|^{2}$ and the following result holds.
\begin{lemma}\label{LVV}
Suppose that $(\mathbf{H_{1}})$-$(\mathbf{H_{4}})$ hold, then for any $x\in \mathbb{R}^{d}$, and fixed $X_{t}^{\epsilon,\mu^{\epsilon}}=x, \mu_{t}^{\epsilon}=\mu$, 
\begin{eqnarray*}
\mathbb{E}^{x}(\epsilon V_{t}^{\epsilon,\mu}\otimes V_{t}^{\epsilon,\mu})=\frac{1}{2}\sum_{k=1}^{\infty}\|\sigma_{k}(x)\|^{2}\gamma(x)^{-1}I_{d\times d}+\epsilon C(x,t),
\end{eqnarray*}
where $\|C(x,t)\|\leq M(1+\|x\|^{2})$.
\end{lemma}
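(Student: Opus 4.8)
The plan is to solve the linear SDE for $V_t^{\epsilon,\mu,x}$ explicitly, just as in Lemma~\ref{SUB}, and then compute the second moment of the resulting Gaussian. Since $x$ and $\mu$ are frozen, the coefficient $\gamma(x)$ is a constant, and the equation
\begin{eqnarray*}
\epsilon dV_{t}^{\epsilon,\mu,x}=-\gamma(x)V_{t}^{\epsilon,\mu,x}dt+ K*\mu(x)dt+\sum_{k=1}^{\infty}\sigma_{k}(x)dB_{t}^{k}
\end{eqnarray*}
has the mild solution
\begin{eqnarray*}
V_{t}^{\epsilon,\mu,x}=e^{-\frac{\gamma(x)}{\epsilon}t}V_{0}+\frac{1}{\epsilon}\int_{0}^{t}e^{-\frac{\gamma(x)}{\epsilon}(t-s)}K*\mu(x)ds+\frac{1}{\epsilon}\sum_{k=1}^{\infty}\int_{0}^{t}e^{-\frac{\gamma(x)}{\epsilon}(t-s)}\sigma_{k}(x)dB_{s}^{k}.
\end{eqnarray*}
First I would take the tensor product $V_t^{\epsilon,\mu,x}\otimes V_t^{\epsilon,\mu,x}$, multiply by $\epsilon$, and take $\mathbb{E}^{x}$. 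The cross terms between the $V_0$ part, the drift part and the stochastic part either vanish (the martingale has mean zero, and $V_0$ is independent of $B^k$, assuming $\mathbb{E}V_0$ handled or absorbed) or are deterministic and bounded.

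The key computation is the It\^o-isometry term. Using $\mathbb{E}[B^k B^\ell]$ independence and $Q^{ij}(x,x)=\sum_k\sigma_k^i(x)\sigma_k^j(x)$, one gets
\begin{eqnarray*}
\epsilon\,\mathbb{E}^x\Big(\frac{1}{\epsilon}\sum_k\int_0^t e^{-\frac{\gamma(x)}{\epsilon}(t-s)}\sigma_k(x)dB_s^k\Big)^{\otimes 2}=\frac{1}{\epsilon}\Big(\int_0^t e^{-\frac{2\gamma(x)}{\epsilon}(t-s)}ds\Big)\sum_k\sigma_k(x)\otimes\sigma_k(x),
\end{eqnarray*}
and $\frac{1}{\epsilon}\int_0^t e^{-\frac{2\gamma(x)}{\epsilon}(t-s)}ds=\frac{1}{2\gamma(x)}(1-e^{-\frac{2\gamma(x)}{\epsilon}t})$. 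By assumption $(\mathbf{H_3})$(iii)(b), $Q(0)=I_d$, i.e. $\sum_k\sigma_k(x)\otimes\sigma_k(x)=\|\sigma_k(x)\|^2$-weighted identity; more precisely $\sum_k\sigma_k^i(x)\sigma_k^j(x)=Q^{ij}(x,x)=Q^{ij}(0)=\delta_{ij}$, so this term equals $\frac{1}{2\gamma(x)}I_{d\times d}$ up to the exponentially small correction $-\frac{1}{2\gamma(x)}e^{-\frac{2\gamma(x)}{\epsilon}t}I_{d\times d}$, which is $O(\epsilon)$ uniformly (indeed the factor $e^{-\frac{2\gamma_0}{\epsilon}t}/\epsilon$ is bounded for $t$ bounded below, and in any case absorbed into $\epsilon C(x,t)$ after multiplying appropriately — one should be careful here and keep it inside the remainder). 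Here I will use the stated form with $\sum_k\|\sigma_k(x)\|^2$ as in the theorem; matching the paper's normalization, $\sum_k\|\sigma_k(x)\|^2\gamma(x)^{-1}I_{d\times d}/2$ is exactly the leading term claimed. Then I would collect the remaining contributions — the $e^{-2\gamma(x)t/\epsilon}\|V_0\|^2$ term, the drift-squared term $\big(\frac{1}{\epsilon}\int_0^t e^{-\gamma(x)(t-s)/\epsilon}ds\big)^2\|K*\mu(x)\|^2=\gamma(x)^{-2}(1-e^{-\gamma(x)t/\epsilon})^2\|K*\mu(x)\|^2$, the $V_0$-drift cross term, and the exponential correction above — multiply by $\epsilon$, and bound them by $\epsilon M(1+\|x\|^2)$. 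The bound on $\|K*\mu(x)\|$ by $C(1+\|x\|)$ follows exactly as in estimate~(\ref{equ:K1}), using $(\mathbf{H_1})$ and $\mu\in\mathcal{P}_2$; $\mathbb{E}\|V_0\|^2<\infty$ is part of the initial data assumptions; all exponential factors are bounded. This gives $C(x,t)$ with $\|C(x,t)\|\leq M(1+\|x\|^2)$.

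The only mildly delicate point is bookkeeping the exponentially small but $\epsilon^{-1}$-scaled term $\frac{1}{2\gamma(x)}e^{-2\gamma(x)t/\epsilon}$: after multiplying the isometry term by $\epsilon$ we actually have $\frac{\epsilon}{2\gamma(x)}(1-e^{-2\gamma(x)t/\epsilon})$ — wait, re-examining, the statement has $\epsilon V_t\otimes V_t$ on the left, so the isometry contribution to $\epsilon\mathbb{E}^x(V_t\otimes V_t)$ is $\epsilon\cdot\frac{1}{\epsilon}\cdot\frac{1}{2\gamma(x)}(1-e^{-2\gamma(x)t/\epsilon})\sum_k\sigma_k\otimes\sigma_k$, so the factor of $\epsilon$ cancels against the $1/\epsilon$ from the prefactor already, leaving the clean $O(1)$ leading term and an $O(e^{-2\gamma_0 t/\epsilon})$ — hence $O(\epsilon)$ for $t$ in a compact set bounded away from $0$, or simply bounded — correction. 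I would therefore just write the exponential corrections as part of $\epsilon C(x,t)$, noting $\sup_{\epsilon>0}e^{-c/\epsilon}/\epsilon<\infty$ is false near $t=0$ but $e^{-c t/\epsilon}\leq \epsilon \cdot \frac{1}{ct}$ is not uniform either; cleanest is to keep it as a bounded term times $(1+\|x\|^2)$ and absorb into $C(x,t)$ directly without extracting an $\epsilon$, since the statement only needs $\|C(x,t)\|\leq M(1+\|x\|^2)$ and $C$ may itself depend on $\epsilon$. That is the main (very mild) obstacle; everything else is the explicit Gaussian moment computation carried out termwise.
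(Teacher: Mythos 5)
Your proposal is correct and follows essentially the paper's own route: the paper also freezes $\gamma(x)$ and $K*\mu(x)$, writes the mild (Duhamel) solution, and then obtains $\mathbb{E}^{x}(\epsilon V_{t}^{\epsilon,\mu}\otimes V_{t}^{\epsilon,\mu})$ by an It\^o-formula/Duhamel computation for the second moment, which produces exactly your terms (initial-data term, $V_0$--drift cross terms, drift-squared term, noise term) with the same bounds via the analogue of (\ref{equ:K1}); your direct It\^o-isometry evaluation of the stochastic convolution is only a cosmetic variant of solving the moment ODE. The delicate point you flag about the $e^{-2\gamma(x)t/\epsilon}$ correction not being $O(\epsilon)$ uniformly down to $t=0$ is genuine but is shared by the paper, which in effect uses the lemma only for times bounded away from zero (this is the role of $s_0$, $\tau>0$ later, where $e^{-\gamma_0 t/\epsilon}\leq C_{s_0,\gamma_0}\,\epsilon$), so absorbing that term as you describe is consistent with how the estimate is actually used.
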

\begin{proof}
By the Duhamel principle, 
\begin{eqnarray*}
V_{t}^{\epsilon,\mu}=e^{-\frac{\gamma(x)}{\epsilon}t}V_{0}+\frac{1}{\epsilon}\int_{0}
^{t}e^{-\frac{\gamma(x)}{\epsilon}(t-s)}(K*\mu(x))ds+\frac{1}{\epsilon}\sum_{k=1}^{\infty}\int_{0}
^{t}e^{-\frac{\gamma(x)}{\epsilon}(t-s)}\sigma_{k}(x)dB_{s}^{k}.
\end{eqnarray*}
Taking expectation on both sides,
\begin{eqnarray}\label{Ex}
\mathbb{E}^{x}V_{t}^{\epsilon,\mu}=e^{-\frac{\gamma(x)}{\epsilon}t}\mathbb{E}V_{0}+\gamma(x)^{-1}(1-e^{-\frac{\gamma(x)}{\epsilon}t})(K*\mu(x)).
\end{eqnarray}
Applying It${\rm\hat{o}}$'s formula to $\frac{1}{2}(V_{t}^{\epsilon,\mu}\otimes V_{t}^{\epsilon,\mu})$,
\begin{eqnarray*}
\frac{d}{dt}\mathbb{E}^{x}(\epsilon V_{t}^{\epsilon,\mu}\otimes V_{t}^{\epsilon,\mu})
&=&-2\frac{\gamma(x)}{\epsilon}\mathbb{E}^{x}(\epsilon V_{t}^{\epsilon,\mu}\otimes V_{t}^{\epsilon,\mu})+\mathbb{E}^{x}V_{t}^{\epsilon,\mu}\otimes (K*\mu(x))\\
&&+(K*\mu(x))\otimes\mathbb{E}^{x}V_{t}^{\epsilon,\mu}+\frac{1}{\epsilon}\sum_{k=1}^{\infty}\|\sigma_{k}(x)\|^{2}I_{d\times d}.\\
\end{eqnarray*}
By the Duhamel principle and (\ref{Ex}), we have
\begin{eqnarray}\label{VVZ}
\mathbb{E}^{x}(\epsilon V_{t}^{\epsilon,\mu}\otimes V_{t}^{\epsilon,\mu})
&=&e^{-2\frac{\gamma(x)}{\epsilon}t}\mathbb{E}^{x}(\epsilon V_{0}\otimes V_{0})+\int_{0}^{t}e^{-2\frac{\gamma(x)}{\epsilon}(t-s)}\mathbb{E}^{x}V_{s}^{\epsilon,\mu}\otimes (K*\mu(x))ds\nonumber\\
&&+\int_{0}^{t}e^{-2\frac{\gamma(x)}{\epsilon}(t-s)}(K*\mu(x))\otimes\mathbb{E}^{x}V_{s}^{\epsilon,\mu}ds\nonumber\\
&&+\frac{1}{\epsilon}\sum_{k=1}^{\infty}\|\sigma_{k}(x)\|^{2}\int_{0}^{t}e^{-\frac{\gamma(x)}{\epsilon}(t-s)}ds\cdot I_{d\times d}\nonumber\\
&=&e^{-2\frac{\gamma(x)}{\epsilon}t}\mathbb{E}^{x}(\epsilon V_{0}\otimes V_{0})+\int_{0}^{t}e^{-2\frac{\gamma(x)}{\epsilon}(t-s)}e^{-\frac{\gamma(x)}{\epsilon}s}\mathbb{E}V_{0}\otimes (K*\mu(x))ds\nonumber\\
&&+\int_{0}^{t}e^{-2\frac{\gamma(x)}{\epsilon}(t-s)}e^{-\frac{\gamma(x)}{\epsilon}s}(K*\mu(x))\otimes\mathbb{E}V_{0}ds\nonumber\\
&&+2\int_{0}^{t}e^{-2\frac{\gamma(x)}{\epsilon}(t-s)}\gamma(x)^{-1}(1-e^{-\frac{\gamma(x)}{\epsilon}t})(K*\mu(x))\otimes (K*\mu(x))ds\nonumber\\
&&+\frac{1}{2}\sum_{k=1}^{\infty}\|\sigma_{k}(x)\|^{2}(1-e^{-\frac{\gamma(x)}{\epsilon}t})\cdot I_{d\times d}\nonumber\\
&\triangleq&\sum_{j=1}^{4}J_{i}^{\epsilon}(t)+\frac{1}{2}\sum_{k=1}^{\infty}\|\sigma_{k}(x)\|^{2}\gamma(x)^{-1}\cdot I_{d\times d}.
\end{eqnarray}
Next we estimate these terms respectively. In fact, for $J_{1}^{\epsilon}(t)$, by $(\mathbf{H_{2}})$,
\begin{eqnarray}\label{VV1}
\|J_{1}^{\epsilon}(t)\|=\|e^{-2\frac{\gamma(x)}{\epsilon}t}\mathbb{E}^{x}(\epsilon V_{0}\otimes V_{0})\|\leq \epsilon e^{-\frac{2\gamma_{0}}{\epsilon}t}\mathbb{E}\|V_{0}\|^{2}.
\end{eqnarray}
Similarly,
\begin{eqnarray}\label{VV2}
\|J_{2}^{\epsilon}(t)\|&=&\Big\|\int_{0}^{t}e^{-2\frac{\gamma(x)}{\epsilon}(t-s)}e^{-\frac{\gamma(x)}{\epsilon}s}\mathbb{E}V_{0}\otimes (K*\mu(x))ds\nonumber\\
&&+\int_{0}^{t}e^{-2\frac{\gamma(x)}{\epsilon}(t-s)}e^{-\frac{\gamma(x)}{\epsilon}s}(K*\mu(x))\otimes\mathbb{E}V_{0}ds\Big\|\nonumber\\
&\leq&2\int_{0}^{t}e^{-2\frac{\gamma_{0}}{\epsilon}(t-s)}e^{-\frac{\gamma_{0}}{\epsilon}s}\mathbb{E}\|V_{0}\|\|K*\mu(x)\|ds\nonumber\\
&\leq&\epsilon C_{\gamma_{0}}\mathbb{E}\|V_{0}\|\|K*\mu(x)\|,
\end{eqnarray}
and
\begin{eqnarray}\label{VV3}
\|J_{3}^{\epsilon}(t)\|&=&\Big\|2\int_{0}^{t}e^{-2\frac{\gamma(x)}{\epsilon}(t-s)}\gamma(x)^{-1}(1-e^{-\frac{\gamma(x)}{\epsilon}t})(K*\mu(x))\otimes (K*\mu(x))ds\Big\|\nonumber\\
&\leq&C_{\gamma_{0}}\|K*\mu(x)\|^{2}\int_{0}^{t}e^{-2\frac{\gamma_{0}}{\epsilon}(t-s)}ds\nonumber\\
&\leq&\epsilon C_{\gamma_{0}}\|K*\mu(x)\|^{2}.
\end{eqnarray}
Furthermore,
\begin{eqnarray}\label{VV4}
\|J_{4}^{\epsilon}(t)\|&=&\Big\|\frac{1}{2}\sum_{k=1}^{\infty}\|\sigma_{k}(x)\|^{2}\gamma(x)^{-1}e^{-\frac{\gamma(x)}{\epsilon}t}\cdot I_{d\times d}\Big\|\nonumber\\
&\leq& \frac{\sqrt{d}}{2}\sum_{k=1}^{\infty}\|\sigma_{k}(x)\|^{2}|\gamma(x)^{-1}|e^{-\frac{\gamma_{0}}{\epsilon}t}\leq\epsilon C_{\gamma_{0},L_{\sigma}}.
\end{eqnarray}
Then by (\ref{VVZ}), (\ref{VV1}), (\ref{VV2}), ((\ref{VV3})) and (\ref{VV4}), we obtain
\begin{eqnarray*}\label{VVZ1}
\mathbb{E}^{x}(\epsilon V_{t}^{\epsilon,\mu}\otimes V_{t}^{\epsilon,\mu})&=&\frac{1}{2}\sum_{k=1}^{\infty}\|\sigma_{k}(x)\|^{2}\gamma(x)^{-1}\cdot I_{d\times d}+\epsilon C(x,t),
\end{eqnarray*}
where $\|C(x,t)\|\leq M(1+\|x\|^{2})$.
\end{proof}

By Lemma \ref{SUB} and Lemma \ref{SC}, we obtain that $\{X_{t}^{\epsilon,\mu^{\epsilon}}\}_{0< \epsilon\leq1}$ is tight in space $C(0,T;\mathbb{R}^{d})$. Next we pass the limit in (\ref{MEL}) as $\epsilon\to0$.

Note that the law $\mu_{t}^{\epsilon}$ of $(X_{t}^{\epsilon,\mu^{\epsilon}},V_{t}^{\epsilon,\mu^{\epsilon}})$
satisfies 
\begin{eqnarray}\label{FPE}
d\mu_{t}^{\epsilon}+v\cdot\nabla_{x}\mu_{t}^{\epsilon}dt&=&\frac{\gamma(x)}{\epsilon}\nabla_{v}\cdot(v\mu_{t}^{\epsilon})dt-\frac{1}{\epsilon}\nabla_{v}\cdot((K*\mu_{t}^{\epsilon}(x))\mu_{t}^{\epsilon})dt\nonumber\\
&&-\frac{1}{\epsilon}\sum_{k=1}^{\infty}\nabla_{v}\cdot(\sigma_{k}(x)\mu_{t}^{\epsilon})dB_{t}^{k}+\frac{1}{\epsilon^{2}}\Delta_{v}\mu_{t}^{\epsilon}dt,
\end{eqnarray}
in the weak sense, that is, for $\Psi\in C_{0}^{\infty}(\mathbb{R}^{d}\times\mathbb{R}^{d})$,
\begin{eqnarray}\label{equ:3.2}
&&\langle \mu_{t}^{\epsilon},\Psi\rangle-\langle \mu_{0}^{\epsilon},\Psi\rangle\nonumber\\
&=&\int_{0}^{t}\int_{\mathbb{R}^{d}\times\mathbb{R}^{d}}\Big[v\nabla_{x}\Psi-\frac{\gamma(x)}{\epsilon}v+\frac{K*\rho_{s}^{\epsilon}(x)}{\epsilon}+\frac{1}{\epsilon}\sum_{k=1}^{\infty}\sigma_{k}(x)dB_{s}^{k}\nabla_{v}\Psi\nonumber\\
&&\quad\quad\quad\quad\quad\quad\quad\quad\quad\quad\quad\quad\quad\quad\quad+\frac{1}{\epsilon^{2}}\Delta_{v}\Psi\Big]\mu_{s}^{\epsilon}(x,v)dxdv,
\end{eqnarray}
where $\rho_{t}^{\epsilon}(x)=\int_{\mathbb{R}^{2d}}\mu_{t}^{\epsilon}(x,v)dv$ is the conditional law of $X_{t}^{\epsilon,\mu^{\epsilon}}$ with respect to $\mathcal{F}_{t}^{B}$.
For small $\epsilon>0$, the above equation (\ref{FPE}) is a singular system which is difficult to pass the limit $\epsilon\to0$. Here we present an averaging approach to derive such limit for $\mu_{t}^{\epsilon}$ explicitly.

For this, we define the local mass $\rho_{t}^{\epsilon}$, the marginal distribution of $\mu_{t}^{\epsilon}$, and local momentum $Y_{t}^{\epsilon}$ as
\begin{eqnarray*}
\rho_{t}^{\epsilon}(x)=\int_{\mathbb{R}^{d}}\mu_{t}^{\epsilon}(x,v)dv, \quad Y_{t}^{\epsilon}=\int_{\mathbb{R}^{d}}v\mu_{t}^{\epsilon}(x,v)dv,
\end{eqnarray*}
respectively.  Then in the weak sense,
\begin{eqnarray}\label{LZD}
d\rho_{t}^{\epsilon}&=&-\nabla_{x} Y_{t}^{\epsilon}dt\\
dY_{t}^{\epsilon}&=&-\frac{\gamma(x)}{\epsilon}Y_{t}^{\epsilon}dt+\frac{K*\rho_{t}^{\epsilon}(x)}{\epsilon}\rho_{t}^{\epsilon}dt\nonumber\\
&&-\nabla_{x}\cdot[\rho_{t}^{\epsilon}\mathbb{E}^{x}(V_{t}^{\epsilon,\mu^{\epsilon}}\otimes V_{t}^{\epsilon,\mu^{\epsilon}})]dt-\frac{1}{\epsilon}\sum_{k=1}^{\infty}\sigma_{k}(x)dB_{t}^{k}\rho_{t}^{\epsilon}.
\end{eqnarray}
Here we have used the following calculations
\begin{equation*}
\int_{\mathbb{R}^{d}}v\otimes v\mu_{t}^{\epsilon}(x,v)dv=\rho_{t}^{\epsilon}(x)\int_{\mathbb{R}^{d}}v\otimes v\frac{\mu_{t}^{\epsilon}(x,v)}{\rho_{t}^{\epsilon}(x)}dv=\rho_{t}^{\epsilon}(x)\mathbb{E}^{x}(V_{t}^{\epsilon,\mu^{\epsilon}}\otimes V_{t}^{\epsilon,\mu^{\epsilon}}).
\end{equation*}
We couple equation (\ref{MEL}) to the above stochastic slow-fast system which yields the following closed system
\begin{eqnarray}\label{HE}
d\rho_{t}^{\epsilon}&=&-\nabla_{x}Y_{t}^{\epsilon}\label{Macro1}dt,\\
dY_{t}^{\epsilon}&=&-\frac{\gamma(x)}{\epsilon}Y_{t}^{\epsilon}dt+\frac{K*\rho_{t}^{\epsilon}(x)}{\epsilon}\rho_{t}^{\epsilon}dt-\nabla_{x}\cdot[\rho_{t}^{\epsilon}\mathbb{E}^{x}(V_{t}^{\epsilon,\mu^{\epsilon}}\otimes V_{t}^{\epsilon,\mu^{\epsilon}})]dt\nonumber\\
&&-\frac{1}{\epsilon}\sum_{k=1}^{\infty}\sigma_{k}(x)dB_{t}^{k}\rho_{t}^{\epsilon},\label{Macro2}\\
dX_{t}^{\epsilon,\mu^{\epsilon}}&=&V_{t}^{\epsilon,\mu^{\epsilon}}dt,\quad \rho_{t}^{\epsilon}=\mathcal{L}(X_{t}^{\epsilon,\mu^{\epsilon}}|\mathcal{F}_{t}^{B}),\\
\epsilon dV_{t}^{\epsilon,\mu^{\epsilon}}&=&-\gamma(X_{t}^{\epsilon,\mu^{\epsilon}})V_{t}^{\epsilon,\mu^{\epsilon}}dt+K*\rho_{t}^{\epsilon}(X_{t}^{\epsilon,\mu^{\epsilon}})dt+\sum_{k=1}^{\infty}\sigma_{k}(X_{t}^{\epsilon,\mu^{\epsilon}})dB_{t}^{k}.\nonumber\\
\end{eqnarray}
Here $\mathbb{E}^{x}$ is the expectation with fixed $X_{t}^{\epsilon,\mu^{\epsilon}}=x\in\mathbb{R}^{d}$. We consider the equations (\ref{Macro1})-(\ref{Macro2}) in the weak sense as we just concerned with the weak convergence of $\rho_{t}^{\epsilon}$.
Next we apply an averaging approach to pass the limit $\epsilon\to0$. We need an auxiliary process $\{\hat{\rho}_{t}^{\epsilon}, \hat{Y}_{t}^{\epsilon}\}_{0\leq t\leq T}$. For this we divide the time interval $[0,T]$ into intervals of size $\delta>0$ as $0=t_{0}<t_{1}<\cdots<t_{k}<t_{k+1}<\cdots<t_{[\frac{T}{\delta}]+1}=T$, for $t_{k+1}-t_{k}=\delta, k=0,\cdots,[\frac{T}{\delta}]$ and for $t\in[t_{k},t_{k+1}]$,
\begin{eqnarray}\label{FE1}
d\hat{\rho}_{t}^{\epsilon}&=&-\nabla_{x}\hat{Y}_{t}^{\epsilon}dt,\quad \hat{\rho}_{t_{k}}^{\epsilon}=\rho_{t_{k}}^{\epsilon},\\
d\hat{Y}_{t}^{\epsilon}&=&-\frac{\gamma(x)}{\epsilon}\hat{Y}_{t}^{\epsilon}dt+\frac{K*\rho_{t_{k}}^{\epsilon}(x)}{\epsilon}\rho_{t_{k}}^{\epsilon}dt\nonumber\\
&&-\nabla_{x}\cdot[\rho_{t_{k}}^{\epsilon}\mathbb{E}^{x}(V_{t_{k}}^{\epsilon,\mu^{\epsilon}}\otimes V_{t_{k}}^{\epsilon,\mu^{\epsilon}})]dt+\frac{1}{\epsilon}\sum_{k=1}^{\infty}\sigma_{k}(x)dB_{t}^{k}\rho_{t_{k}}^{\epsilon}.\\
\hat{Y}_{t_{k}}^{\epsilon}&=&Y_{t_{k}}^{\epsilon},\nonumber
\end{eqnarray}
Next lemma gives an estimate of the difference between $\hat{Y}_{t}^{\epsilon}$ and $Y_{t}^{\epsilon}$.
\begin{lemma}\label{LC1}
Assume that $(\mathbf{H_{1}})$-$(\mathbf{H_{4}})$ are valid, for every $T>0$ and $\psi\in C_{0}^{\infty}(\mathbb{R}^{d};\mathbb{R}^{d})$, there is a constant $C_{T}>0$ such that
\begin{eqnarray*}
\sup_{0\leq t\leq T}\mathbb{E}|\langle Y_{t}^{\epsilon}-\hat{Y}_{t}^{\epsilon},\psi\rangle|\leq C_{T}\Big(\frac{\sqrt{\delta}}{\epsilon}+\frac{\delta}{\epsilon}+\frac{\delta}{\epsilon^{2}}\Big)\|\psi\|_{Lip},
\end{eqnarray*}
where $C_{T}$ also depends on parameters $\gamma_{1}, L_{K}, L_{\sigma}, L_{\gamma}, M$ and initial values.
\end{lemma}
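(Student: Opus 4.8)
The plan is to control the difference $Z_t^\epsilon:=Y_t^\epsilon-\hat Y_t^\epsilon$, which on each slab $[t_k,t_{k+1}]$ starts from $Z_{t_k}^\epsilon=0$ and, on subtracting the $\hat Y$-equation in (\ref{FE1}) from (\ref{Macro2}), solves weakly the linear equation
\begin{eqnarray*}
dZ_t^\epsilon&=&-\frac{\gamma(x)}{\epsilon}Z_t^\epsilon\,dt+\frac1\epsilon\big[(K*\rho_t^\epsilon(x))\rho_t^\epsilon-(K*\rho_{t_k}^\epsilon(x))\rho_{t_k}^\epsilon\big]dt\\
&&-\nabla_x\cdot\big[\rho_t^\epsilon\mathbb{E}^x(V_t^{\epsilon,\mu^\epsilon}\otimes V_t^{\epsilon,\mu^\epsilon})-\rho_{t_k}^\epsilon\mathbb{E}^x(V_{t_k}^{\epsilon,\mu^\epsilon}\otimes V_{t_k}^{\epsilon,\mu^\epsilon})\big]dt-\frac1\epsilon\sum_{k=1}^\infty\sigma_k(x)(\rho_t^\epsilon-\rho_{t_k}^\epsilon)\,dB_t^k.
\end{eqnarray*}
The stiff term $-\gamma(x)Z_t^\epsilon/\epsilon$ rules out a direct Gronwall estimate for $\langle Z_t^\epsilon,\psi\rangle$, so I would instead test against the integrating factor $\phi_s(x):=e^{-\gamma(x)(t-s)/\epsilon}\psi(x)$: applying the (stochastic) product rule to $\langle Z_s^\epsilon,\phi_s\rangle$ on $[t_k,t]$ and using $\partial_s\phi_s=(\gamma(x)/\epsilon)\phi_s$, the two $\gamma$-contributions cancel, and since $Z_{t_k}^\epsilon=0$ and $\phi_t=\psi$ one gets the mild representation $\langle Z_t^\epsilon,\psi\rangle=\mathrm{I}+\mathrm{II}+\mathrm{III}$ with
\begin{eqnarray*}
\mathrm{I}&=&\frac1\epsilon\int_{t_k}^t\big\langle(K*\rho_s^\epsilon)\rho_s^\epsilon-(K*\rho_{t_k}^\epsilon)\rho_{t_k}^\epsilon,\,e^{-\gamma(\cdot)(t-s)/\epsilon}\psi\big\rangle ds,\\
\mathrm{II}&=&\int_{t_k}^t\big\langle\rho_s^\epsilon\mathbb{E}^x(V_s^{\epsilon,\mu^\epsilon}\otimes V_s^{\epsilon,\mu^\epsilon})-\rho_{t_k}^\epsilon\mathbb{E}^x(V_{t_k}^{\epsilon,\mu^\epsilon}\otimes V_{t_k}^{\epsilon,\mu^\epsilon}),\,\nabla_x(e^{-\gamma(\cdot)(t-s)/\epsilon}\psi)\big\rangle ds,\\
\mathrm{III}&=&-\frac1\epsilon\sum_{k=1}^\infty\int_{t_k}^t\big\langle(\sigma_k\cdot\psi)\,e^{-\gamma(\cdot)(t-s)/\epsilon},\,\rho_s^\epsilon-\rho_{t_k}^\epsilon\big\rangle dB_s^k.
\end{eqnarray*}

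For $\mathrm{I}$ and $\mathrm{III}$ I would use that $\rho_s^\epsilon$ is the conditional law of $X_s^{\epsilon,\mu^\epsilon}$ given $\mathcal{F}_s^B$ and that, because $X_{t_k}^{\epsilon,\mu^\epsilon}$ is conditionally independent of the Brownian increments on $[t_k,s]$ given $\mathcal{F}_{t_k}^B$, also $\langle\rho_{t_k}^\epsilon,\varphi\rangle=\mathbb{E}[\varphi(X_{t_k}^{\epsilon,\mu^\epsilon})\mid\mathcal{F}_s^B]$; hence each pairing above is a conditional expectation of an increment $\varphi(X_s^{\epsilon,\mu^\epsilon})-\varphi(X_{t_k}^{\epsilon,\mu^\epsilon})$ (for $\mathrm{I}$ one first splits $(K*\rho_s^\epsilon)\rho_s^\epsilon-(K*\rho_{t_k}^\epsilon)\rho_{t_k}^\epsilon$ by adding and subtracting a mixed term and invokes $(\mathbf{H_{1}})$ and (\ref{equ:K1})), and Lemma \ref{SC} then gives $\mathbb{E}\|X_s^{\epsilon,\mu^\epsilon}-X_{t_k}^{\epsilon,\mu^\epsilon}\|^2\le C\delta$ while Lemma \ref{SUB} supplies the remaining moments. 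The $k$-sum in $\mathrm{III}$ is handled by expanding the square, using the It\^{o} isometry, and rewriting $\sum_k\sigma_k^i(x)\sigma_k^j(y)$ and its first derivatives through $Q$, whose derivatives up to second order are bounded by $(\mathbf{H_{3}})$(iii) (so that $\sum_k\|\sigma_k\|^2$ and $\sum_k\|\nabla\sigma_k\|^2$ are bounded). The explicit prefactor $1/\epsilon$ in $\mathrm{I}$ and $\mathrm{III}$, together with the elementary bounds $\int_{t_k}^t\big((t-s)/\epsilon\big)^j e^{-\gamma_0(t-s)/\epsilon}\,ds\le C\epsilon$ produced by the integrating factor, yield the contributions of size $\sqrt\delta/\epsilon$ and $\delta/\epsilon$.

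The term $\mathrm{II}$ is the crux, and the main obstacle. Since $V_t^{\epsilon,\mu^\epsilon}$ is a fast process, Lemma \ref{LVV} shows $\mathbb{E}\|V_t^{\epsilon,\mu^\epsilon}\|^2$ is of order $1/\epsilon$, so the stress tensor $\rho_t^\epsilon\mathbb{E}^x(V_t^{\epsilon,\mu^\epsilon}\otimes V_t^{\epsilon,\mu^\epsilon})$ driving the momentum equation is itself of order $1/\epsilon$ and a crude bound on its increment is $O(1/\epsilon)$, hence useless. The saving point is that the leading $O(1/\epsilon)$ part of $\mathbb{E}^x(V_t^{\epsilon,\mu^\epsilon}\otimes V_t^{\epsilon,\mu^\epsilon})$, namely $\tfrac{1}{2\epsilon}\sum_k\|\sigma_k(x)\|^2\gamma(x)^{-1}I_d$, is independent of $t$, so applying Lemma \ref{LVV} at both $s$ and $t_k$ gives
\begin{equation*}
\rho_s^\epsilon\mathbb{E}^x(V_s^{\epsilon,\mu^\epsilon}\otimes V_s^{\epsilon,\mu^\epsilon})-\rho_{t_k}^\epsilon\mathbb{E}^x(V_{t_k}^{\epsilon,\mu^\epsilon}\otimes V_{t_k}^{\epsilon,\mu^\epsilon})=\frac{\sum_k\|\sigma_k(x)\|^2}{2\epsilon\,\gamma(x)}(\rho_s^\epsilon-\rho_{t_k}^\epsilon)I_d+\big(\rho_s^\epsilon C(x,s)-\rho_{t_k}^\epsilon C(x,t_k)\big),
\end{equation*}
where the genuinely $O(1/\epsilon)$ term now carries the small factor $\rho_s^\epsilon-\rho_{t_k}^\epsilon$ and, by Lemma \ref{LVV}, the remainder is $O(1+\|x\|^2)$. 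To bound $\rho_s^\epsilon-\rho_{t_k}^\epsilon$ I would use the continuity equation $d\rho_t^\epsilon=-\nabla_x\cdot Y_t^\epsilon\,dt$ from (\ref{LZD}), i.e. $\langle\rho_s^\epsilon-\rho_{t_k}^\epsilon,\varphi\rangle=\int_{t_k}^s\langle Y_u^\epsilon,\nabla_x\varphi\rangle du$ with $\mathbb{E}|\langle Y_u^\epsilon,\nabla_x\varphi\rangle|\le\|\nabla_x\varphi\|_\infty(\mathbb{E}\|V_u^{\epsilon,\mu^\epsilon}\|^2)^{1/2}\le C\|\nabla_x\varphi\|_\infty/\sqrt\epsilon$, together with Lemma \ref{SC} for the increments; the test functions arising here are built from $\psi$, $\gamma$ and the $\sigma_k$ and are controlled using their smoothness. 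Integrating by parts onto $e^{-\gamma(\cdot)(t-s)/\epsilon}\psi$ brings in the additional factor $(t-s)/\epsilon$ from differentiating the exponential, which together with the $1/\epsilon$ above is the source of the $\delta/\epsilon^2$ contribution. Collecting the three bounds and taking $\sup_{t\le T}$ gives the assertion, with $C_T$ depending on $\gamma_1,L_K,L_\sigma,L_\gamma,M$ and the second moments of $(X_0,V_0)$. The delicate points are the exact cancellation of the $1/\epsilon$ term displayed above and the bookkeeping of the powers of $\epsilon$ generated by the stiff scaling.
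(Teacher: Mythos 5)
Your proposal is correct in substance and shares the paper's skeleton: on each slab $[t_k,t_{k+1}]$ you pass to the mild form (your integrating-factor computation is exactly the paper's Duhamel representation), obtain the same three terms, treat the $K*\rho$ increment via the conditional-law representation, Lipschitz bounds and Lemma \ref{SC}, and invoke Lemma \ref{LVV} for the velocity covariance. Where you genuinely diverge is in the two stiff terms. For the stochastic term the paper does not exploit the increment $\rho_s^{\epsilon}-\rho_{t_k}^{\epsilon}$ at all: it simply splits into the two stochastic integrals and bounds each by It\^o isometry/BDG, which already gives the $\sqrt{\delta}/\epsilon$ contribution and only requires $\sum_k\sup_x\|\sigma_k(x)\|^2<\infty$; your refinement through the increment is sound but then forces you to control $\sum_k\mathrm{Lip}(\sigma_k)^2$, which you correctly extract from the mixed second derivatives of $Q$ in $(\mathbf{H_{3}})$(iii), an input the paper never needs here. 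For the stress-tensor term, the step you call the crux and declare a crude bound ``useless'' is in fact exactly what the paper does: after integrating by parts onto $e^{-\gamma(x)(t-s)/\epsilon}\psi$ (producing the $(t-s)\nabla_x\gamma/\epsilon$ factor), it multiplies and divides by $\epsilon$ and uses only the uniform bound $\|\mathbb{E}^{x}(\epsilon V\otimes V)\|\le C(1+\|x\|^{2})$ from Lemma \ref{LVV} for each of the two times separately; since the slab has length $\delta$, this crude estimate already yields the allowed $\delta(\epsilon^{-1}+\epsilon^{-2})$ rate, so no cancellation between $s$ and $t_k$ is required. Your cancellation argument — isolating the time-independent leading part $\tfrac{1}{2\epsilon}\sum_k\|\sigma_k(x)\|^{2}\gamma(x)^{-1}I_d$ so that the $O(1/\epsilon)$ piece carries $\rho_s^{\epsilon}-\rho_{t_k}^{\epsilon}$ — is legitimate (with the same implicit identification of $\mathbb{E}^{x}(V\otimes V)$ with the frozen-coefficient process that the paper also makes) and in fact delivers a slightly better power of $\delta$, at the cost of extra bookkeeping (Lipschitz norms of the exponential weight, a bound on $\langle\rho_s^{\epsilon}-\rho_{t_k}^{\epsilon},\cdot\rangle$ via Lemma \ref{SC} or the continuity equation); but it is not needed to reach the stated bound, and your final accounting of which term produces $\delta/\epsilon^{2}$ is therefore looser than the paper's, where that power comes directly from the crude estimate just described.
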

\begin{proof}
Let $Z_{t}^{\epsilon}=Y_{t}^{\epsilon}-\hat{Y}_{t}^{\epsilon}$, then for $t\in[t_{k},t_{k+1}]$, we have
\begin{eqnarray*}
dZ_{t}^{\epsilon}&=&-\frac{\gamma(x)}{\epsilon}Z_{t}^{\epsilon}dt
+\frac{K*\rho_{t}^{\epsilon}(x)}{\epsilon}\rho_{t}^{\epsilon}(x)dt-\frac{K*\rho_{t_{k}}^{\epsilon}(x)}{\epsilon}\rho_{t_{k}}^{\epsilon}dt\\
&&-\frac{1}{\epsilon}\sum_{k=1}^{\infty}\sigma_{k}(x)dB_{t}^{k}(\rho_{t}^{\epsilon}-\rho_{t_{k}}^{\epsilon})\\
&&-\nabla_{x}\cdot[\rho_{t}^{\epsilon}\mathbb{E}^{x}(V_{t}^{\epsilon,\mu^{\epsilon}}\otimes V_{t}^{\epsilon,\mu^{\epsilon}})]dt
+\nabla_{x}\cdot[\rho_{t_{k}}^{\epsilon}\mathbb{E}^{x}(V_{t_{k}}^{\epsilon,\mu^{\epsilon}}\otimes V_{t_{k}}^{\epsilon,\mu^{\epsilon}})]dt.
\end{eqnarray*}
By the Duhamel principle, we have
\begin{eqnarray*}
Z_{t}^{\epsilon}&=&\frac{1}{\epsilon}\int_{t_{k}}^{t}e^{-\frac{\gamma(x)}{\epsilon}(t-s)}[K*\rho_{s}^{\epsilon}(x)\rho_{s}^{\epsilon}(x)-K*\rho_{t_{k}}^{\epsilon}(x)\rho_{t_{k}}^{\epsilon}(x)]ds\\
&&-\frac{1}{\epsilon}\sum_{l=1}^{\infty}\int_{t_{k}}^{t}e^{-\frac{\gamma(x)}{\epsilon}(t-s)}\sigma_{l}(x)(\rho_{s}^{\epsilon}-\rho_{t_{k}}^{\epsilon})dB_{s}^{l}\\
&&-\int_{t_{k}}^{t}e^{-\frac{\gamma(x)}{\epsilon}(t-s)}\nabla_{x}\cdot[\rho_{s}^{\epsilon}\mathbb{E}^{x}(V_{s}^{\epsilon,\mu^{\epsilon}}\otimes V_{s}^{\epsilon,\mu^{\epsilon}})-\rho_{t_{k}}^{\epsilon}\mathbb{E}^{x}(V_{t_{k}}^{\epsilon,\mu^{\epsilon}}\otimes V_{t_{k}}^{\epsilon,\mu^{\epsilon}})]ds,
\end{eqnarray*}
and 
\begin{eqnarray}\label{ZZ}
\langle Z_{t}^{\epsilon},\psi\rangle&=&\frac{1}{\epsilon}\int_{t_{k}}^{t}\langle e^{-\frac{\gamma(x)}{\epsilon}(t-s)}[K*\rho_{s}^{\epsilon}(x)\rho_{s}^{\epsilon}(x)-K*\rho_{t_{k}}^{\epsilon}(x)\rho_{t_{k}}^{\epsilon}(x)],\psi\rangle ds\nonumber\\
&&-\frac{1}{\epsilon}\sum_{l=1}^{\infty}\int_{t_{k}}^{t}\langle e^{-\frac{\gamma(x)}{\epsilon}(t-s)}\sigma_{l}(x)(\rho_{s}^{\epsilon}-\rho_{t_{k}}^{\epsilon}),\psi\rangle dB_{s}^{l}\nonumber\\
&&-\int_{t_{k}}^{t}\langle e^{-\frac{\gamma(x)}{\epsilon}(t-s)}\nabla_{x}\cdot[\rho_{s}^{\epsilon}\mathbb{E}^{x}(V_{s}^{\epsilon,\mu^{\epsilon}}\otimes V_{s}^{\epsilon,\mu^{\epsilon}})-\rho_{t_{k}}^{\epsilon}\mathbb{E}^{x}(V_{t_{k}}^{\epsilon,\mu^{\epsilon}}\otimes V_{t_{k}}^{\epsilon,\mu^{\epsilon}})],\psi\rangle ds\nonumber\\
&\triangleq&\frac{1}{\epsilon}\Pi_{1}^{\epsilon}(t)+\Pi_{2}^{\epsilon}(t)+\Pi_{3}^{\epsilon}(t).
\end{eqnarray}
By the definition of expectation, $(\mathbf{H_{1}})$ and the basic inequality: $|e^{x}-e^{y}|\leq |x-y|(e^{x}+e^{y}), x,y\in \mathbb{R}$, we have
\begin{eqnarray*}
&&\mathbb{E}|\langle e^{-\frac{\gamma(x)}{\epsilon}(t-s)}[K*\rho_{s}^{\epsilon}(x)\rho_{s}^{\epsilon}(x)-K*\rho_{t_{k}}^{\epsilon}(x)\rho_{t_{k}}^{\epsilon}(x)],\psi\rangle|\\
&=&\mathbb{E}|\mathbb{E}_{X_{s}^{\epsilon,\mu^{\epsilon}}}[e^{-\frac{\gamma(X_{s}^{\epsilon,\mu^{\epsilon}})}{\epsilon}(t-s)}\mathbb{E}_{\bar{X}_{s}^{\epsilon,\mu^{\epsilon}}}[K(X_{s}^{\epsilon,\mu^{\epsilon}}-\bar{X}_{s}^{\epsilon,\mu^{\epsilon}})|\mathcal{F}_{s}^{B}]\psi(X_{s}^{\epsilon,\mu^{\epsilon}})|\mathcal{F}_{s}^{B}]\nonumber\\
&&-\mathbb{E}_{X_{t_{k}}^{\epsilon,\mu^{\epsilon}}}[e^{-\frac{\gamma(X_{t_{k}}^{\epsilon,\mu^{\epsilon}})}{\epsilon}(t-s)}\mathbb{E}_{\bar{X}_{t_{k}}^{\epsilon,\mu^{\epsilon}}}[K(X_{t_{k}}^{\epsilon,\mu^{\epsilon}}-\bar{X}_{t_{k}}^{\epsilon,\mu^{\epsilon}})|\mathcal{F}_{t_{k}}^{B}]\psi(X_{t_{k}}^{\epsilon,\mu^{\epsilon}})|\mathcal{F}_{t_{k}}^{B}]|\\
&=&\mathbb{E}|\mathbb{E}[e^{-\frac{\gamma(X_{s}^{\epsilon,\mu^{\epsilon}})}{\epsilon}(t-s)}\psi(X_{s}^{\epsilon,\mu^{\epsilon}})|\mathcal{F}_{s}^{B}]\cdot\mathbb{E}_{\bar{X}_{s}^{\epsilon,\mu^{\epsilon}}}[K(X_{s}^{\epsilon,\mu^{\epsilon}}-\bar{X}_{s}^{\epsilon,\mu^{\epsilon}})|\mathcal{F}_{s}^{B}]\\
&&-\mathbb{E}[e^{-\frac{\gamma(X_{t_{k}}^{\epsilon,\mu^{\epsilon}})}{\epsilon}(t-s)}\psi(X_{t_{k}}^{\epsilon,\mu^{\epsilon}})|\mathcal{F}_{s}^{B}]\cdot\mathbb{E}_{\bar{X}_{t_{k}}^{\epsilon,\mu^{\epsilon}}}[K(X_{t_{k}}^{\epsilon,\mu^{\epsilon}}-\bar{X}_{t_{k}}^{\epsilon,\mu^{\epsilon}})|\mathcal{F}_{s}^{B}]|\\
&\leq&\mathbb{E}|\mathbb{E}[(e^{-\frac{\gamma(X_{s}^{\epsilon,\mu^{\epsilon}})}{\epsilon}(t-s)}-e^{-\frac{\gamma(X_{t_{k}}^{\epsilon,\mu^{\epsilon}})}{\epsilon}(t-s)})\psi(X_{s}^{\epsilon,\mu^{\epsilon}})|\mathcal{F}_{s}^{B}]\mathbb{E}_{\bar{X}_{s}^{\epsilon,\mu^{\epsilon}}}[K(X_{s}^{\epsilon,\mu^{\epsilon}}-\bar{X}_{s}^{\epsilon,\mu^{\epsilon}})|\mathcal{F}_{s}^{B}]|\nonumber\\
&&+\mathbb{E}|\mathbb{E}[e^{-\frac{\gamma(X_{t_{k}}^{\epsilon,\mu^{\epsilon}})}{\epsilon}(t-s)}(\psi(X_{s}^{\epsilon,\mu^{\epsilon}})-\psi(X_{t_{k}}^{\epsilon,\mu^{\epsilon}}))|\mathcal{F}_{s}^{B}]\mathbb{E}_{\bar{X}_{s}^{\epsilon,\mu^{\epsilon}}}[K(X_{s}^{\epsilon,\mu^{\epsilon}}-\bar{X}_{s}^{\epsilon,\mu^{\epsilon}})|\mathcal{F}_{s}^{B}]|\nonumber\\
&&+\mathbb{E}|\mathbb{E}[e^{-\frac{\gamma(X_{t_{k}}^{\epsilon,\mu^{\epsilon}})}{\epsilon}(t-s)}\psi(X_{s}^{\epsilon,\mu^{\epsilon}})|\mathcal{F}_{s}^{B}]\mathbb{E}[K(x-\bar{X}_{s}^{\epsilon,\mu^{\epsilon}})-K(z-\bar{X}_{t_{k}}^{\epsilon,\mu^{\epsilon}})|\mathcal{F}_{s}^{B}]\Big|_{x=X_{s}^{\epsilon,\mu^{\epsilon}}, z=X_{t_{k}}^{\epsilon,\mu^{\epsilon}}}|\nonumber\\
&\leq&\frac{4TL_{\gamma}}{\epsilon}\mathbb{E}[\mathbb{E}[\|X_{s}^{\epsilon,\mu^{\epsilon}}-X_{t_{k}}^{\epsilon,\mu^{\epsilon}}\|\|\psi(X_{s}^{\epsilon,\mu^{\epsilon}})\||\mathcal{F}_{s}^{B}]\mathbb{E}_{\bar{X}_{s}^{\epsilon,\mu^{\epsilon}}}[\|K(X_{s}^{\epsilon,\mu^{\epsilon}}-\bar{X}_{s}^{\epsilon,\mu^{\epsilon}})\||\mathcal{F}_{s}^{B}]]\nonumber\\
&&+\|\psi\|_{Lip}\mathbb{E}[\mathbb{E}[\|X_{s}^{\epsilon,\mu^{\epsilon}}-X_{t_{k}}^{\epsilon,\mu^{\epsilon}}\||\mathcal{F}_{s}^{B}]\mathbb{E}_{\bar{X}_{s}^{\epsilon,\mu^{\epsilon}}}[\|K(X_{s}^{\epsilon,\mu^{\epsilon}}-\bar{X}_{s}^{\epsilon,\mu^{\epsilon}})\||\mathcal{F}_{s}^{B}]]\\
&&+L_{K}\mathbb{E}[\mathbb{E}[\|\psi(X_{s}^{\epsilon,\mu^{\epsilon}})\||\mathcal{F}_{s}^{B}]\mathbb{E}[\|(x-\bar{X}_{s}^{\epsilon,\mu^{\epsilon}})-(z-\bar{X}_{t_{k}}^{\epsilon,\mu^{\epsilon}})\||\mathcal{F}_{s}^{B}]\Big|_{x=X_{s}^{\epsilon,\mu^{\epsilon}}, z=X_{t_{k}}^{\epsilon,\mu^{\epsilon}}}]\\
&\leq&\frac{4TL_{\gamma}}{\epsilon}\|\psi\|_{Lip}(\mathbb{E}\|X_{s}^{\epsilon,\mu^{\epsilon}}-X_{t_{k}}^{\epsilon,\mu^{\epsilon}}\|^{2})^{\frac{1}{2}}(\mathbb{E}\|K(X_{s}^{\epsilon,\mu^{\epsilon}}-\bar{X}_{s}^{\epsilon,\mu^{\epsilon}})\|^{2})^{\frac{1}{2}}\\
&&+\|\psi\|_{Lip}(\mathbb{E}\|X_{s}^{\epsilon,\mu^{\epsilon}}-X_{t_{k}}^{\epsilon,\mu^{\epsilon}}\|^{2})^{\frac{1}{2}}(\mathbb{E}\|K(X_{s}^{\epsilon,\mu^{\epsilon}}-\bar{X}_{s}^{\epsilon,\mu^{\epsilon}})\|^{2})^{\frac{1}{2}}\\
&&+L_{K}\|\psi\|_{Lip}(\mathbb{E}\|(X_{s}^{\epsilon,\mu^{\epsilon}}-X_{t_{k}}^{\epsilon,\mu^{\epsilon}})-(\bar{X}_{s}^{\epsilon,\mu^{\epsilon}}-\bar{X}_{t_{k}}^{\epsilon,\mu^{\epsilon}})\|^{2})^{\frac{1}{2}}\\
&\leq& \frac{C_{T,X_{0},V_{0},L_{\gamma}, L_{K},\|\psi\|_{\infty}}}{\epsilon}\delta^{\frac{1}{2}}\Big(1+\frac{1}{\epsilon}\Big)\|\psi\|_{Lip},
\end{eqnarray*}
where $\bar{X}_{s}^{\epsilon,\mu^{\epsilon}}$ is an independent copy of $X_{s}^{\epsilon,\mu^{\epsilon}}$, $\mathbb{E}_{X}$ denote the expectation with respect to the distribution of $X$ and Remark $1$ in~\cite{T} is used in the second equality. Then by Lemma \ref{SUB},
\begin{eqnarray}\label{Pi1}
\frac{1}{\epsilon}|\Pi_{1}^{\epsilon}(t)|\leq C_{T,X_{0},V_{0},L_{\gamma}, L_{K}}\delta^{\frac{3}{2}}\Big(\frac{1}{\epsilon}+\frac{1}{\epsilon^{2}}\Big)\|\psi\|_{Lip}.
\end{eqnarray}
Similarly, direct computation yields
\begin{eqnarray}\label{Pi2Z}
&&\frac{1}{\epsilon}\Big\|\sum_{l=1}^{\infty}\int_{t_{k}}^{t}\langle e^{-\frac{\gamma(x)}{\epsilon}(t-s)}\sigma_{l}(x)(\rho_{s}^{\epsilon}(x)-\rho_{t_{k}}^{\epsilon}(x)),\psi\rangle dB_{s}^{l}\Big\|\nonumber\\
&\leq& \frac{1}{\epsilon}\Big\|\sum_{l=1}^{\infty}\int_{t_{k}}^{t}\langle e^{-\frac{\gamma(x)}{\epsilon}(t-s)}\sigma_{l}(x)\rho_{s}^{\epsilon}(x),\psi\rangle dB_{s}^{l}\Big\|+\frac{1}{\epsilon}\Big\|\sum_{l=1}^{\infty}\int_{t_{k}}^{t}\langle e^{-\frac{\gamma(x)}{\epsilon}(t-s)}\sigma_{l}(x)\rho_{t_{k}}^{\epsilon}(x),\psi\rangle dB_{s}^{l}\Big\|\nonumber
\end{eqnarray}
\begin{eqnarray}
&=&\frac{1}{\epsilon}\Big\|\sum_{l=1}^{\infty}\int_{t_{k}}^{t}\mathbb{E}[e^{-\frac{\gamma(X_{s}^{\epsilon,\mu^{\epsilon}})}{\epsilon}(t-s)}\sigma_{l}(X_{s}^{\epsilon,\mu^{\epsilon}})\psi(X_{s}^{\epsilon,\mu^{\epsilon}})|\mathcal{F}_{s}^{B}]dB_{s}^{l}\Big\|\nonumber\\
&&+\frac{1}{\epsilon}\Big\|\sum_{l=1}^{\infty}\int_{t_{k}}^{t}\mathbb{E}[e^{-\frac{\gamma(X_{t_{k}}^{\epsilon,\mu^{\epsilon}})}{\epsilon}(t-s)}\sigma_{l}(X_{t_{k}}^{\epsilon,\mu^{\epsilon}})\psi(X_{t_{k}}^{\epsilon,\mu^{\epsilon}})|\mathcal{F}_{s}^{B}]dB_{s}^{l}\Big\|\nonumber\\
&\triangleq&\Pi_{2,1}^{\epsilon}(t)+\Pi_{2,2}^{\epsilon}(t).
\end{eqnarray}
We only need to estimate $\Pi_{2,1}^{\epsilon}(t)$. The estimate for $\Pi_{2,2}^{\epsilon}(t)$ is exactly the same. In fact, by Lemma \ref{SUB} and $(\mathbf{H_{3}})$
\begin{eqnarray*}
\mathbb{E}\Pi_{2,1}^{\epsilon}(t)&=&\frac{1}{\epsilon}\mathbb{E}\Big\|\int_{t_{k}}^{t}\sum_{l=1}^{\infty}\mathbb{E}[e^{-\frac{\gamma(X_{s}^{\epsilon,\mu^{\epsilon}})}{\epsilon}(t-s)}\sigma_{l}(X_{s}^{\epsilon,\mu^{\epsilon}})\psi(X_{s}^{\epsilon,\mu^{\epsilon}})|\mathcal{F}_{s}^{B}]dB_{s}^{l}\Big\|\\
&\leq&\frac{C}{\epsilon}\Big(\sum_{l=1}^{\infty}\sup_{x}\|\sigma_{l}(x)\|^{2}\Big)^{\frac{1}{2}}\Big(\int_{t_{k}}^{t}(1+\mathbb{E}\|X_{s}^{\epsilon,\mu^{\epsilon}}\|^{2})ds\Big)^{\frac{1}{2}}\|\psi\|_{Lip}\\
&\leq&C_{T,X_{0},V_{0},L_{\sigma}}\frac{\sqrt{\delta}}{\epsilon}\|\psi\|_{Lip},
\end{eqnarray*}
then
\begin{eqnarray}\label{Pi2-1}
\sup_{0\leq t\leq T}\mathbb{E}\Pi_{2,1}^{\epsilon}(t)\leq C_{T,X_{0},V_{0},L_{\sigma}}\frac{\sqrt{\delta}}{\epsilon}\|\psi\|_{Lip}.
\end{eqnarray}
Combining (\ref{Pi2Z}) with (\ref{Pi2-1}), we have
\begin{eqnarray}\label{Pi2S}
\sup_{0\leq t\leq T}\mathbb{E}|\Pi_{2}^{\epsilon}(t)|\leq C_{T,X_{0},V_{0},L_{\sigma}}\frac{\sqrt{\delta}}{\epsilon}\|\psi\|_{Lip}.
\end{eqnarray}
Note that
\begin{eqnarray*}
&&\langle e^{-\frac{\gamma(x)}{\epsilon}(t-s)}\nabla_{x}[\rho_{s}^{\epsilon}\mathbb{E}^{x}(V_{s}^{\epsilon,\mu^{\epsilon}}\otimes V_{s}^{\epsilon,\mu^{\epsilon}})-\rho_{t_{k}}^{\epsilon}\mathbb{E}^{x}(V_{t_{k}}^{\epsilon,\mu^{\epsilon}}\otimes V_{t_{k}}^{\epsilon,\mu^{\epsilon}})],\psi\rangle\\
&=&-\langle[\rho_{s}^{\epsilon}\mathbb{E}^{x}(V_{s}^{\epsilon,\mu^{\epsilon}}\otimes V_{s}^{\epsilon,\mu^{\epsilon}})-\rho_{t_{k}}^{\epsilon}\mathbb{E}^{x}(V_{t_{k}}^{\epsilon,\mu^{\epsilon}}\otimes V_{t_{k}}^{\epsilon,\mu^{\epsilon}})], \nabla_{x}(e^{-\frac{\gamma(x)}{\epsilon}(t-s)}\psi)\rangle\\
&=&\langle \rho_{s}^{\epsilon}\mathbb{E}^{x}(V_{s}^{\epsilon,\mu^{\epsilon}}\otimes V_{s}^{\epsilon,\mu^{\epsilon}})-\rho_{t_{k}}^{\epsilon}\mathbb{E}^{x}(V_{t_{k}}^{\epsilon,\mu^{\epsilon}}\otimes V_{t_{k}}^{\epsilon,\mu^{\epsilon}}),e^{-\frac{\gamma(x)}{\epsilon}(t-s)}\frac{\nabla_{x}\gamma(x)}{\epsilon}(t-s)\psi\rangle\\
&&+\langle \rho_{s}^{\epsilon}\mathbb{E}^{x}(V_{s}^{\epsilon,\mu^{\epsilon}}\otimes V_{s}^{\epsilon,\mu^{\epsilon}})-\rho_{t_{k}}^{\epsilon}\mathbb{E}^{x}(V_{t_{k}}^{\epsilon,\mu^{\epsilon}}\otimes V_{t_{k}}^{\epsilon,\mu^{\epsilon}}),e^{-\frac{\gamma(x)}{\epsilon}(t-s)}\nabla_{x}\psi\rangle,
\end{eqnarray*}
and by $(\mathbf{H_{2}})$,
\begin{eqnarray*}
&&\frac{1}{\epsilon}|\langle \rho_{s}^{\epsilon}\mathbb{E}^{x}(\epsilon V_{s}^{\epsilon,\mu^{\epsilon}}\otimes V_{s}^{\epsilon,\mu^{\epsilon}}), \frac{t-s}{\epsilon}e^{-\frac{\gamma(x)}{\epsilon}(t-s)}\nabla_{x}\gamma(x)\psi\rangle|\\
&=&\frac{t-s}{\epsilon^{2}}|\mathbb{E}[\mathbb{E}^{X_{s}^{\epsilon,\mu^{\epsilon}}}(\epsilon V_{s}^{\epsilon,\mu^{\epsilon}}\otimes V_{s}^{\epsilon,\mu^{\epsilon}})e^{-\frac{\gamma(X_{s}^{\epsilon,\mu^{\epsilon}})}{\epsilon}(t-s)}\nabla_{x}\gamma(X_{s}^{\epsilon,\mu^{\epsilon}})\psi(X_{s}^{\epsilon,\mu^{\epsilon}})|\mathcal{F}_{s}^{B}]|\\
&\leq&\frac{C_{L_{\gamma},T}}{\epsilon^{2}}\|\psi\|_{Lip}\mathbb{E}[\|\mathbb{E}^{X_{s}^{\epsilon,\mu^{\epsilon}}}(\epsilon V_{s}^{\epsilon,\mu^{\epsilon}}\otimes V_{s}^{\epsilon,\mu^{\epsilon}})\||\mathcal{F}_{s}^{B}]\\
&\leq&\frac{C_{L_{\gamma},T}}{\epsilon^{2}}\mathbb{E}[\|\mathbb{E}^{X_{s}^{\epsilon,\mu^{\epsilon}}}(\epsilon V_{s}^{\epsilon,\mu^{\epsilon}}\otimes V_{s}^{\epsilon,\mu^{\epsilon}})\||\mathcal{F}_{s}^{B}]\|\psi\|_{Lip}.
\end{eqnarray*}
Similarly, 
\begin{eqnarray*}
&&\frac{1}{\epsilon}|\langle\rho_{s}^{\epsilon}\mathbb{E}^{x}(\epsilon V_{s}^{\epsilon,\mu^{\epsilon}}\otimes V_{s}^{\epsilon,\mu^{\epsilon}}),e^{-\frac{\gamma(x)}{\epsilon}(t-s)}\nabla_{x}\psi\rangle|\\
&=&\frac{1}{\epsilon}|\mathbb{E}[e^{-\frac{\gamma(X_{s}^{\epsilon,\mu^{\epsilon}})}{\epsilon}(t-s)}\mathbb{E}^{X_{s}^{\epsilon,\mu^{\epsilon}}}(\epsilon V_{s}^{\epsilon,\mu^{\epsilon}}\otimes V_{s}^{\epsilon,\mu^{\epsilon}})\nabla_{x}\psi(X_{s}^{\epsilon,\mu^{\epsilon}})|\mathcal{F}_{s}^{B}]|\\
&\leq&\frac{\|\psi\|_{Lip}}{\epsilon}\mathbb{E}[\|\mathbb{E}^{X_{s}^{\epsilon,\mu^{\epsilon}}}(\epsilon V_{s}^{\epsilon,\mu^{\epsilon}}\otimes V_{s}^{\epsilon,\mu^{\epsilon}})\||\mathcal{F}_{s}^{B}].
\end{eqnarray*}
Then
\begin{eqnarray}\label{Pi3}
\mathbb{E}|\Pi_{3}^{\epsilon}(t)|&\leq&\mathbb{E}\int_{t_{k}}^{t}\Big[\frac{C_{L_{\gamma},T}}{\epsilon^{2}}\mathbb{E}[\|\mathbb{E}^{X_{s}^{\epsilon,\mu^{\epsilon}}}(\epsilon V_{s}^{\epsilon,\mu^{\epsilon}}\otimes V_{s}^{\epsilon,\mu^{\epsilon}})\||\mathcal{F}_{s}^{B}]\|\psi\|_{Lip}\nonumber\\
&&+\frac{\|\psi\|_{Lip}}{\epsilon}\mathbb{E}[\|\mathbb{E}^{X_{s}^{\epsilon,\mu^{\epsilon}}}(\epsilon V_{s}^{\epsilon,\mu^{\epsilon}}\otimes V_{s}^{\epsilon,\mu^{\epsilon}})\||\mathcal{F}_{s}^{B}]\Big]ds\nonumber\\
&\leq&\frac{C_{L_{\gamma},T}}{\epsilon^{2}}\|\psi\|_{Lip}\int_{t_{k}}^{t}\mathbb{E}\|\mathbb{E}^{X_{s}^{\epsilon,\mu^{\epsilon}}}(\epsilon V_{s}^{\epsilon,\mu^{\epsilon}}\otimes V_{s}^{\epsilon,\mu^{\epsilon}})\|ds\nonumber\\
&&+\frac{\|\psi\|_{Lip}}{\epsilon}\int_{t_{k}}^{t}\mathbb{E}\|\mathbb{E}^{X_{s}^{\epsilon,\mu^{\epsilon}}}(\epsilon V_{s}^{\epsilon,\mu^{\epsilon}}\otimes V_{s}^{\epsilon,\mu^{\epsilon}})\|ds.
\end{eqnarray}
Combining (\ref{ZZ}) with (\ref{Pi1}), (\ref{Pi2S}) and (\ref{Pi3}), we have
\begin{eqnarray*}\label{ZZ1}
\sup_{0\leq t\leq T}\mathbb{E}|\langle Z_{t}^{\epsilon},\psi\rangle|&\leq& C_{T,X_{0},V_{0},L_{\gamma}, L_{K}}\delta^{\frac{3}{2}}\Big(\frac{1}{\epsilon}+\frac{1}{\epsilon^{2}}\Big)\|\psi\|_{Lip}+C_{T,X_{0},V_{0},L_{\sigma}}\frac{\sqrt{\delta}}{\epsilon}\|\psi\|_{Lip}\nonumber\\
&&+C_{L_{\gamma},T}\Big(\frac{1}{\epsilon}+\frac{1}{\epsilon^{2}}\Big)\|\psi\|_{Lip}\int_{t_{k}}^{t}\mathbb{E}\|\mathbb{E}^{X_{s}^{\epsilon,\mu^{\epsilon}}}(\epsilon V_{s}^{\epsilon,\mu^{\epsilon}}\otimes V_{s}^{\epsilon,\mu^{\epsilon}})\|ds.\nonumber\\
&\leq&C_{T,X_{0},V_{0},L_{\gamma}, L_{K},}\delta^{\frac{3}{2}}\Big(\frac{1}{\epsilon}+\frac{1}{\epsilon^{2}}\Big)\|\psi\|_{Lip}+C_{T,X_{0},V_{0},L_{\sigma}}\frac{\sqrt{\delta}}{\epsilon}\|\psi\|_{Lip}\nonumber\\
&&+C_{T,X_{0},V_{0},L_{\gamma},M}\delta\Big(\frac{1}{\epsilon}+\frac{1}{\epsilon^{2}}\Big)\|\psi\|_{Lip}\nonumber\\
&\leq&C_{T,X_{0},V_{0},L_{\gamma},L_{\sigma},L_{K},M}\Big(\frac{\sqrt{\delta}}{\epsilon}+\frac{\delta}{\epsilon}+\frac{\delta}{\epsilon^{2}}\Big)\|\psi\|_{Lip}.
\end{eqnarray*}
\end{proof}
Next we determine the limit of $\hat{Y}_{t}^{\epsilon}$ as $\epsilon\to0$. For this, we introduce the following equation
\begin{eqnarray}\label{FU2}
d\tilde{Y}_{t}^{\epsilon}&=&-\frac{\gamma(x)}{\epsilon}\tilde{Y}_{t}^{\epsilon}dt+\frac{K*\rho(x)}{\epsilon}\rho(x)dt-\nabla_{x}\cdot[\rho(x)\mathbb{E}^{x}(V_{\tau}^{\epsilon,\mu^{\epsilon}}\otimes V_{\tau}^{\epsilon,\mu^{\epsilon}})]dt\nonumber\\
&&+\frac{1}{\epsilon}\sum_{k=1}^{\infty}\sigma_{k}(x)dB_{t}^{k}\rho(x),
\end{eqnarray}
where $\rho\in \mathcal{P}_{G}$, which is the space consisting of Gaussian measure, $\tau>0$. We firstly give the following result.
\begin{lemma}\label{YS}
Suppose that $(\mathbf{H_{1}})$-$(\mathbf{H_{4}})$ hold, then for every $T>0$ and $\psi\in C_{0}^{\infty}(\mathbb{R}^{d};\mathbb{R}^{d})$, there exists a constant $C_{T}>0$ such that 
\begin{eqnarray*}
\sup_{0\leq t\leq T}\mathbb{E}|\langle Y_{t}^{\epsilon},\psi\rangle|\leq \frac{C_{T,X_{0},V_{0},L_{K},\gamma_{0},L_{\sigma},M}}{\sqrt{\epsilon}}\|\psi\|_{Lip}.
\end{eqnarray*}
\end{lemma}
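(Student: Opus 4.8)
The plan is to use the explicit Duhamel representation of $Y_t^\epsilon$ coming from the slow-fast system (\ref{Macro2}), pair it against the test function $\psi$, and extract the factor $\sqrt\epsilon$ from the time integration against the fast exponential $e^{-\gamma(x)(t-s)/\epsilon}$. Writing $Y_t^\epsilon$ via (\ref{Macro2}) with the Duhamel principle (as in the proof of Lemma \ref{LC1}), we get
\begin{eqnarray*}
Y_t^\epsilon &=& e^{-\frac{1}{\epsilon}\int_0^t\gamma\,dr}Y_0^\epsilon
+\frac{1}{\epsilon}\int_0^t e^{-\frac{\gamma(x)}{\epsilon}(t-s)}K*\rho_s^\epsilon(x)\,\rho_s^\epsilon\,ds\\
&&-\int_0^t e^{-\frac{\gamma(x)}{\epsilon}(t-s)}\nabla_x\cdot\big[\rho_s^\epsilon\,\mathbb{E}^x(V_s^{\epsilon,\mu^\epsilon}\otimes V_s^{\epsilon,\mu^\epsilon})\big]\,ds
-\frac{1}{\epsilon}\sum_{k=1}^\infty\int_0^t e^{-\frac{\gamma(x)}{\epsilon}(t-s)}\sigma_k(x)\,\rho_s^\epsilon\,dB_s^k,
\end{eqnarray*}
so that $\langle Y_t^\epsilon,\psi\rangle$ splits into four pieces. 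The first is bounded by $\mathbb{E}\|V_0\|\,e^{-\gamma_0 t/\epsilon}\|\psi\|_\infty$, which is $O(\epsilon)$ uniformly. For the drift pieces, the key observation is that each is an expectation against $\psi$ of a term carrying the deterministic factor $\int_0^t e^{-\gamma(X_s)(t-s)/\epsilon}\,ds\le\epsilon/\gamma_0$; after using $(\mathbf{H_2})$ to bound $\gamma$ below by $\gamma_0$, $(\mathbf{H_1})$ and (\ref{equ:K1}) to control $\|K*\rho_s^\epsilon\|$ by $C(1+\|X_s^{\epsilon,\mu^\epsilon}\|+\mathbb{E}[\|X_s^{\epsilon,\mu^\epsilon}\|\,|\,\mathcal{F}_s^B])$, and Lemma \ref{SUB} for the uniform second moment, the $\frac1\epsilon$ prefactor is cancelled and these two terms are $O(1)$, in fact $o(1)$. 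For the $\nabla_x\cdot$ term one integrates by parts against $e^{-\gamma(x)(t-s)/\epsilon}\psi$, producing a factor $\frac{\nabla_x\gamma(x)}{\epsilon}(t-s)e^{-\gamma(x)(t-s)/\epsilon}$ plus the lower-order $e^{-\gamma(x)(t-s)/\epsilon}\nabla_x\psi$; by $(\mathbf{H_2})$ the first is bounded by $\frac{C_{L_\gamma}}{\epsilon^2}\,\mathbb{E}\|\mathbb{E}^{X_s^{\epsilon,\mu^\epsilon}}(\epsilon V_s^{\epsilon,\mu^\epsilon}\otimes V_s^{\epsilon,\mu^\epsilon})\|$, and here Lemma \ref{LVV} gives $\mathbb{E}^x(\epsilon V_t^{\epsilon,\mu}\otimes V_t^{\epsilon,\mu})=\tfrac12\sum_k\|\sigma_k(x)\|^2\gamma(x)^{-1}I+\epsilon C(x,t)$ with $\|C(x,t)\|\le M(1+\|x\|^2)$, so the integrand is $O(1)$ in $\epsilon$ and, after multiplying by the time integral of $(t-s)e^{-\gamma_0(t-s)/\epsilon}/\epsilon^2$ which is $O(1/\epsilon)$...

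Wait — that last point is the crux, so let me state it carefully. The term with $\frac{\nabla_x\gamma}{\epsilon}(t-s)e^{-\gamma(x)(t-s)/\epsilon}$ is multiplied by $\rho_s^\epsilon\mathbb{E}^x(V_s\otimes V_s)$, \emph{not} by $\rho_s^\epsilon\mathbb{E}^x(\epsilon V_s\otimes V_s)$, so there is an extra $\frac1\epsilon$; combined, the prefactor is $\frac{1}{\epsilon^2}(t-s)e^{-\gamma_0(t-s)/\epsilon}$, whose time integral over $[0,t]$ is $\int_0^t \frac{u}{\epsilon^2}e^{-\gamma_0 u/\epsilon}\,du\le C/\gamma_0^2$, i.e. $O(1)$ — \emph{not} $O(1/\sqrt\epsilon)$. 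The genuine $1/\sqrt\epsilon$ must therefore come from the stochastic (environmental-noise) term: for $\frac1\epsilon\sum_k\int_0^t e^{-\gamma(x)(t-s)/\epsilon}\langle\sigma_k(x)\rho_s^\epsilon,\psi\rangle\,dB_s^k$ we apply the Burkholder--Davis--Gundy inequality together with $(\mathbf{H_3})$:
\begin{eqnarray*}
\mathbb{E}\Big|\frac{1}{\epsilon}\sum_{k}\int_0^t e^{-\frac{\gamma(X_s)}{\epsilon}(t-s)}\mathbb{E}\big[\sigma_k(X_s^{\epsilon,\mu^\epsilon})\psi(X_s^{\epsilon,\mu^\epsilon})\,|\,\mathcal{F}_s^B\big]dB_s^k\Big|
&\le&\frac{C}{\epsilon}\Big(\sum_k\sup_x\|\sigma_k(x)\|^2\Big)^{1/2}\Big(\int_0^t e^{-\frac{2\gamma_0}{\epsilon}(t-s)}(1+\mathbb{E}\|X_s^{\epsilon,\mu^\epsilon}\|^2)\,ds\Big)^{1/2}\|\psi\|_{Lip}\\
&\le&\frac{C_{T,X_0,V_0,L_\sigma}}{\epsilon}\cdot\sqrt{\epsilon}\,\|\psi\|_{Lip}=\frac{C_{T,X_0,V_0,L_\sigma}}{\sqrt\epsilon}\|\psi\|_{Lip},
\end{eqnarray*}
using $\int_0^t e^{-2\gamma_0(t-s)/\epsilon}\,ds\le\epsilon/(2\gamma_0)$ and Lemma \ref{SUB}. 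This is the dominant term.

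Collecting the four estimates — the first $O(\epsilon)$, the two Duhamel drift terms $O(1)$, and the stochastic term $O(1/\sqrt\epsilon)$ — and taking the supremum over $0\le t\le T$ yields
\begin{eqnarray*}
\sup_{0\le t\le T}\mathbb{E}|\langle Y_t^\epsilon,\psi\rangle|\le\frac{C_{T,X_0,V_0,L_K,\gamma_0,L_\sigma,M}}{\sqrt\epsilon}\|\psi\|_{Lip},
\end{eqnarray*}
which is the claim. The main obstacle I anticipate is the bookkeeping in the $\nabla_x\cdot$ term: one must be careful that the integration by parts transfers the derivative onto $e^{-\gamma(x)(t-s)/\epsilon}\psi$, that the resulting $\frac{t-s}{\epsilon}$ growth is tamed by the exponential so that $\int_0^t\frac{(t-s)}{\epsilon^2}e^{-\gamma_0(t-s)/\epsilon}ds=O(1)$, and that Lemma \ref{LVV} is invoked on the $\epsilon V\otimes V$ combination with the leftover $\frac1\epsilon$ accounted for — getting this power counting right is what pins the rate at exactly $1/\sqrt\epsilon$ rather than something worse; everything else is an application of $(\mathbf{H_1})$--$(\mathbf{H_3})$, BDG, and Lemmas \ref{SUB} and \ref{LVV}.
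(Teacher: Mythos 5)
Your proposal is correct, but it takes a genuinely different route from the paper. The paper does not use a Duhamel representation for $Y_t^{\epsilon}$ at all: it applies It\^o's formula to $|\langle Y_t^{\epsilon},\psi\rangle|^2$, uses Young's inequality to absorb the cross terms into the strong damping $-\frac{\gamma_0}{\epsilon}\mathbb{E}|\langle Y_t^{\epsilon},\psi\rangle|^2$, invokes Lemma \ref{LVV} on the $\mathbb{E}^{x}(\epsilon V\otimes V)$ combination and Lemma \ref{SUB} for the moments, and then Gronwall yields the second-moment bound $\sup_{t\le T}\mathbb{E}|\langle Y_t^{\epsilon},\psi\rangle|^2\le C\|\psi\|_{Lip}^2/\epsilon$, from which the stated first-moment bound follows by Cauchy--Schwarz; the $\epsilon^{-2}$ It\^o correction $\frac{1}{2\epsilon^2}\sum_k|\langle\sigma_k\rho_t^{\epsilon},\psi\rangle|^2$ against the $\epsilon^{-1}$ damping is what produces the rate, so both arguments correctly locate the $1/\sqrt{\epsilon}$ in the environmental noise. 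Your mild-formulation argument buys a more transparent term-by-term power count (and only what the lemma actually asserts), while the paper's energy argument is shorter to write, avoids integrating by parts inside the stochastic convolution, and delivers the slightly stronger second-moment estimate for free; your bookkeeping of the $\nabla_x\cdot$ term (the $\frac{t-s}{\epsilon^2}e^{-\gamma_0(t-s)/\epsilon}$ factor integrating to $O(1)$ after Lemma \ref{LVV}) mirrors exactly the manipulations the paper performs in Lemmas \ref{LC1} and \ref{YR}, so it is consistent with the paper's framework. Two cosmetic slips: the initial-condition term $e^{-\gamma(x)t/\epsilon}Y_0^{\epsilon}$ is only $O(1)$ uniformly on $[0,T]$ (it equals $\langle Y_0^{\epsilon},\psi\rangle$ at $t=0$), not ``$O(\epsilon)$ uniformly'', and the $K$-drift term is $O(1)$ rather than $o(1)$; neither affects the conclusion since both are dominated by $1/\sqrt{\epsilon}$.
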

\begin{proof}
By (\ref{Macro2}), we have
\begin{eqnarray}\label{YP}
d\langle Y_{t}^{\epsilon},\psi\rangle&=&-\Big\langle \frac{\gamma(x)}{\epsilon}Y_{t}^{\epsilon},\psi\Big\rangle dt+\Big\langle\frac{K*\rho_{t}^{\epsilon}(x)}{\epsilon}\rho_{t}^{\epsilon},\psi\Big\rangle dt-\langle \nabla_{x}\cdot[\rho_{t}^{\epsilon}\mathbb{E}^{x}(V_{t}^{\epsilon,\mu^{\epsilon}}\otimes V_{t}^{\epsilon,\mu^{\epsilon}})],\psi\rangle dt\nonumber\\
&&+\frac{1}{\epsilon}\sum_{k=1}^{\infty}\langle\sigma_{k}(x)\rho_{t}^{\epsilon},\psi\rangle dB_{t}^{k}.
\end{eqnarray}
Then by It${\rm \hat{o}}$'s formula, we have
\begin{eqnarray*}
\frac{1}{2}\frac{d}{dt}|\langle Y_{t}^{\epsilon},\psi\rangle|^{2}&=&\langle Y_{t}^{\epsilon},\psi\rangle\Big[-\Big\langle \frac{\gamma(x)}{\epsilon}Y_{t}^{\epsilon},\psi\Big\rangle+\Big\langle\frac{K*\rho_{t}^{\epsilon}(x)}{\epsilon}\rho_{t}^{\epsilon},\psi\Big\rangle\\
&&-\langle \nabla_{x}\cdot[\rho_{t}^{\epsilon}\mathbb{E}^{x}(V_{t}^{\epsilon,\mu^{\epsilon}}\otimes V_{t}^{\epsilon,\mu^{\epsilon}}],\psi\rangle+\frac{1}{\epsilon}\sum_{k=1}^{\infty}\langle\sigma_{k}(x)\rho_{t}^{\epsilon},\psi\rangle dB_{t}^{k}\Big]\\
&&+\frac{1}{2\epsilon^{2}}\sum_{k=1}^{\infty}|\langle\sigma_{k}(x)\rho_{t}^{\epsilon},\psi\rangle|^{2}\\
&\leq&-\frac{\gamma_{0}}{\epsilon}|\langle Y_{t}^{\epsilon},\psi\rangle|^{2}+\frac{1}{\epsilon}|\langle Y_{t}^{\epsilon},\psi\rangle||\langle (K*\rho_{t}^{\epsilon})\rho_{t}^{\epsilon},\psi\rangle|\\
&&+\frac{1}{\epsilon}|\langle Y_{t}^{\epsilon},\psi\rangle||\langle \rho_{t}^{\epsilon}\mathbb{E}^{x}(\epsilon V_{t}^{\epsilon,\mu^{\epsilon}}\otimes V_{t}^{\epsilon,\mu^{\epsilon}}),\nabla_{x}\psi\rangle|\\
&&+\frac{1}{\epsilon}\langle Y_{t}^{\epsilon},\psi\rangle\sum_{k=1}^{\infty}\langle \sigma_{k}(x)\rho_{t}^{\epsilon},\psi\rangle dB_{t}^{k}+\frac{1}{2\epsilon^{2}}\sum_{l=1}^{\infty}|\langle\sigma_{k}(x)\rho_{t}^{\epsilon},\psi\rangle|^{2}.
\end{eqnarray*}
Taking expectation, utilizing the Young inequality, Lemma \ref{SUB} and the definition of the expectation, we obtain
\begin{eqnarray*}
&&\frac{1}{2}\frac{d}{dt}\mathbb{E}|\langle Y_{t}^{\epsilon},\psi\rangle|^{2}\leq-\frac{\gamma_{0}}{2\epsilon}\mathbb{E}|\langle Y_{t}^{\epsilon},\psi\rangle|^{2}+\frac{1}{\gamma_{0}\epsilon}\mathbb{E}|\langle\mathbb{E}_{\bar{X}_{t}^{\epsilon,\mu^{\epsilon}}}[K(x-\bar{X}_{t}^{\epsilon,\mu^{\epsilon}})|\mathcal{F}_{t}^{B}]\rho_{t}^{\epsilon},\psi\rangle|^{2}\\
&&+\frac{1}{\gamma_{0}\epsilon}\mathbb{E}\|\mathbb{E}[\mathbb{E}^{X_{t}^{\epsilon,\mu^{\epsilon}}}(\epsilon V_{t}^{\epsilon,\mu^{\epsilon}}\otimes V_{t}^{\epsilon,\mu^{\epsilon}})\nabla_{x}\psi(X_{t}^{\epsilon,\mu^{\epsilon}})|\mathcal{F}_{t}^{B}]\|^{2}+\frac{1}{2\epsilon^{2}}\sum_{l=1}^{\infty}\sup_{x}\|\sigma_{l}(x)\|^{2}\mathbb{E}\|\psi(X_{t}^{\epsilon,\mu^{\epsilon}})\|^{2}\\
&\leq&-\frac{\gamma_{0}}{2\epsilon}\mathbb{E}|\langle Y_{t}^{\epsilon},\psi\rangle|^{2}+\frac{1}{\gamma_{0}\epsilon}\mathbb{E}\|\mathbb{E}_{X_{t}^{\epsilon,\mu^{\epsilon}}}[\mathbb{E}_{\bar{X}_{t}^{\epsilon,\mu^{\epsilon}}}[K(X_{t}^{\epsilon,\mu^{\epsilon}}-\bar{X}_{t}^{\epsilon,\mu^{\epsilon}})|\mathcal{F}_{t}^{B}]\psi(X_{t}^{\epsilon,\mu^{\epsilon}})|\mathcal{F}_{t}^{B}]\|^{2}\\
&&+\frac{1}{\gamma_{0}\epsilon}\mathbb{E}\|\mathbb{E}[\mathbb{E}^{X_{t}^{\epsilon,\mu^{\epsilon}}}(\epsilon V_{t}^{\epsilon,\mu^{\epsilon}}\otimes V_{t}^{\epsilon,\mu^{\epsilon}})\nabla_{x}\psi(X_{t}^{\epsilon,\mu^{\epsilon}})|\mathcal{F}_{t}^{B}]\|^{2}+\frac{\|\psi\|_{Lip}}{\epsilon^{2}}\sum_{l=1}^{\infty}\sup_{x}\|\sigma_{l}(x)\|^{2}(1+\mathbb{E}\|X_{t}^{\epsilon,\mu^{\epsilon}}\|^{2})\\
&\leq&-\frac{\gamma_{0}}{2\epsilon}\mathbb{E}|\langle Y_{t}^{\epsilon},\psi\rangle|^{2}+\frac{1}{\gamma_{0}\epsilon}\|\psi\|_{Lip}^{2}\mathbb{E}_{X_{t}^{\epsilon,\mu^{\epsilon}}}[\mathbb{E}_{\bar{X}_{t}^{\epsilon,\mu^{\epsilon}}}\|K(X_{t}^{\epsilon,\mu^{\epsilon}}-\bar{X}_{t}^{\epsilon,\mu^{\epsilon}})\|^{2}]
\end{eqnarray*}
\begin{eqnarray*}
&&+\frac{1}{\gamma_{0}\epsilon}\|\psi\|_{Lip}^{2}\mathbb{E}\Big\|\frac{1}{2}\sum_{k=1}^{\infty}\|\sigma_{k}(X_{t}^{\epsilon,\mu^{\epsilon}})\|^{2}\gamma(X_{t}^{\epsilon,\mu^{\epsilon}})^{-1}\cdot I_{d\times d}+\epsilon C(X_{t}^{\epsilon,\mu^{\epsilon}},t)\Big\|^{2}\\
&&+\frac{\|\psi\|_{Lip}}{\epsilon^{2}}\sum_{k=1}^{\infty}\sup_{x}\|\sigma_{k}(x)\|^{2}(1+\mathbb{E}\|X_{t}^{\epsilon,\mu^{\epsilon}}\|^{2})\\
&\leq&-\frac{\gamma_{0}}{2\epsilon}\mathbb{E}|\langle Y_{t}^{\epsilon},\psi\rangle|^{2}+\frac{C_{\gamma_{0},L_{K}}}{\epsilon}\mathbb{E}_{X_{t}^{\epsilon,\mu^{\epsilon}}}[\mathbb{E}_{\bar{X}_{t}^{\epsilon,\mu^{\epsilon}}}(1+\|X_{t}^{\epsilon,\mu^{\epsilon}}\|^{2}+\|\bar{X}_{t}^{\epsilon,\mu^{\epsilon}}\|^{2})]\\
&&+\frac{C_{\gamma_{0}}}{\epsilon}\|\psi\|_{Lip}^{2}\Big(\sup_{x}\sum_{l=1}^{\infty}\|\sigma_{l}(x)\|^{2}\Big)+C_{\gamma_{0},M}\|\psi\|_{Lip}^{2}(1+\mathbb{E}\|X_{t}^{\epsilon,\mu^{\epsilon}}\|^{2})\\
&&+\frac{C_{L_{\sigma}}}{\epsilon^{2}}\|\psi\|_{Lip}(1+\mathbb{E}\|X_{t}^{\epsilon,\mu^{\epsilon}}\|^{2})\\
&\leq&-\frac{\gamma_{0}}{2\epsilon}\mathbb{E}|\langle Y_{t}^{\epsilon},\psi\rangle|^{2}+C_{T,X_{0},V_{0},L_{K},\gamma_{0},L_{\sigma},M}\Big(\frac{1}{\epsilon}+\frac{1}{\epsilon^{2}}\Big)\|\psi\|_{Lip}^{2},
\end{eqnarray*}
then Gronwall's inequality yields
\begin{eqnarray*}
\sup_{0\leq t\leq T}\mathbb{E}|\langle Y_{t}^{\epsilon},\psi\rangle|^{2}\leq \frac{C_{T,X_{0},V_{0},L_{K},\gamma_{0},L_{\sigma},M}}{\epsilon}\|\psi\|_{Lip}^{2}.
\end{eqnarray*}
\end{proof}

Then for any $t\in[t_{k},t_{k+1}], k\in\mathbb{N}$, we have the following estimate.
\begin{lemma}\label{YR}
Suppose that $(\mathbf{H_{1}})$-$(\mathbf{H_{4}})$ hold, then for every $x\in \mathbb{R}^{d}$, fixed $\rho_{t}^{\epsilon}=\rho\in\mathcal{P}_{G}$ and each $0< s_{0}<\delta$, $\psi\in C_{0}^{\infty}(\mathbb{R}^{d};\mathbb{R}^{d})$, 
\begin{eqnarray*}
\mathbb{E}\Big|\int_{t_{k}}^{t}\langle \tilde{Y}_{s}^{\epsilon}-Y^{*}_{s},\psi\rangle ds\Big|&\leq&\sqrt{\epsilon}\delta C_{s_{0},\gamma_{0},M,L_{K},L_{\sigma},X_{0},V_{0}}\|\psi\|_{Lip}+\epsilon\delta C_{s_{0},\gamma_{0}}|\langle (K*\rho(x))\rho(x),\psi\rangle|\nonumber\\
&&+(2\epsilon\delta+\delta^{2})C_{M,\gamma_{0},L_{\gamma}}|\langle(1+\|x\|^{2})\rho(x),\psi\rangle|\nonumber\\
&&+\Big(\epsilon\delta+\frac{\delta^{2}}{\epsilon}\Big)C_{\gamma_{0},s_{0},L_{\sigma},L_{\gamma}}|\langle\rho(x),\psi\rangle|\nonumber\\
&&+\epsilon\delta C_{M,\gamma_{0}}|\langle (1+\|x\|^{2})\rho(x),\nabla_{x}\psi\rangle|+\epsilon\delta C_{\gamma_{0},s_{0},L_{\sigma}}|\langle \rho(x),\nabla_{x}\psi\rangle|\nonumber\\
&&+C_{L_{\sigma}}|\langle\rho,|\psi|\rangle|o(\sqrt{\epsilon} e^{-\frac{\gamma_{0}}{\epsilon}s_{0}}),
\end{eqnarray*}
where
\begin{eqnarray}\label{EEN}
Y^{*}_{t}(x)&=&\gamma(x)^{-1}(K*\rho(x))\rho(x)+\frac{1}{2}\sum_{k=1}^{\infty}\|\sigma_{k}(x)\|^{2}\nabla_{x}\gamma(x)\gamma(x)^{-3}\rho(x)\\
&&+\frac{1}{2}\nabla_{x}\cdot\Big(\sum_{k=1}^{\infty}\|\sigma_{k}(x)\|^{2}\rho(x)\gamma(x)^{-2}I_{d\times d}\Big)+\sum_{k=1}^{\infty}\sigma_{k}(x)\rho(x)\gamma(x)^{-1}\dot{B}_{t}^{k}.\nonumber
\end{eqnarray}
\end{lemma}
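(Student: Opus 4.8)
The plan is a freezing-and-averaging argument on the block $[t_k,t_{k+1}]$, where the coefficients of (\ref{FU2}) are frozen: the measure $\rho\in\mathcal{P}_G$ and the instant $\tau$ are fixed, and $\gamma(x)/\epsilon$ acts as a pointwise-in-$x$ multiplication, so $\tilde{Y}^\epsilon$ solves a linear Ornstein--Uhlenbeck-type equation and the variation-of-constants formula gives
\begin{eqnarray*}
\tilde{Y}_t^\epsilon&=&e^{-\frac{\gamma(x)}{\epsilon}(t-t_k)}\tilde{Y}_{t_k}^\epsilon+\frac{1}{\epsilon}\int_{t_k}^t e^{-\frac{\gamma(x)}{\epsilon}(t-s)}(K*\rho(x))\rho(x)\,ds\\
&&-\int_{t_k}^t e^{-\frac{\gamma(x)}{\epsilon}(t-s)}\nabla_x\cdot[\rho(x)\mathbb{E}^x(V_\tau^{\epsilon,\mu^\epsilon}\otimes V_\tau^{\epsilon,\mu^\epsilon})]\,ds+\frac{1}{\epsilon}\sum_{l=1}^\infty\int_{t_k}^t e^{-\frac{\gamma(x)}{\epsilon}(t-s)}\sigma_l(x)\rho(x)\,dB_s^l,
\end{eqnarray*}
the initial value being $\tilde{Y}_{t_k}^\epsilon=Y_{t_k}^\epsilon$, as for $\hat{Y}^\epsilon$ in (\ref{FE1}). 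First I would insert this into $\int_{t_k}^t\langle\tilde{Y}_s^\epsilon-Y_s^*,\psi\rangle\,ds$, test against $\psi$, and interchange the order of the deterministic double integrals (Fubini) and of the iterated stochastic integral (stochastic Fubini), so that only the fast-scale inner integral $\int_u^t e^{-\gamma(x)(s-u)/\epsilon}\,ds=\gamma(x)^{-1}\epsilon(1-e^{-\gamma(x)(t-u)/\epsilon})$ survives; the prefactor $\gamma(x)^{-1}$ here is the origin of all the $\gamma(x)^{-1}$ weights in (\ref{EEN}).

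Next, for the term carrying $\nabla_x\cdot$ I would integrate by parts in $x$ against $\psi$, exactly as for $\Pi_3^\epsilon$ in the proof of Lemma \ref{LC1}; this brings out the factor $\tfrac{t-s}{\epsilon}\nabla_x\gamma(x)$ from differentiating the weight $e^{-\gamma(x)(t-s)/\epsilon}$, together with the elementary identity $\int_{t_k}^t e^{-\gamma(x)(t-s)/\epsilon}\tfrac{t-s}{\epsilon^2}\,ds=\gamma(x)^{-2}(1-(1+\tfrac{\gamma(x)(t-t_k)}{\epsilon})e^{-\gamma(x)(t-t_k)/\epsilon})$. Substituting Lemma \ref{LVV}, namely $\mathbb{E}^x(\epsilon V_\tau^{\epsilon,\mu^\epsilon}\otimes V_\tau^{\epsilon,\mu^\epsilon})=\tfrac12\sum_l\|\sigma_l(x)\|^2\gamma(x)^{-1}I_{d\times d}+\epsilon C(x,\tau)$ with $\|C(x,\tau)\|\le M(1+\|x\|^2)$, the leading (non-transient) parts of these expressions combine, after the integrations by parts, into precisely the four summands of $Y_t^*$ in (\ref{EEN}): the drift $\gamma(x)^{-1}(K*\rho)\rho$; the noise-induced drift $\tfrac12\sum_l\|\sigma_l\|^2\nabla_x\gamma\,\gamma^{-3}\rho$ from the $\nabla_x\gamma$ factor; the diffusive flux $\tfrac12\nabla_x\cdot(\sum_l\|\sigma_l\|^2\rho\gamma^{-2}I_{d\times d})$; and the martingale flux $\sum_l\sigma_l(x)\rho(x)\gamma(x)^{-1}\dot{B}^l_t$ from the stochastic Fubini on the last term. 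Everything else is an error.

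I would then control the errors, each outer $s$-integral over $[t_k,t]$ supplying the factor $t-t_k\le\delta$ that accounts for the $\delta$ prefactors. The homogeneous contribution $\int_{t_k}^t\langle e^{-\gamma(x)(s-t_k)/\epsilon}\tilde{Y}_{t_k}^\epsilon,\psi\rangle\,ds$ is the slowly-decaying transient of the initial datum: using $\int e^{-\gamma_0(s-t_k)/\epsilon}\,ds\le\tfrac\epsilon{\gamma_0}$ (improved to $\tfrac\epsilon{\gamma_0}e^{-\gamma_0 s_0/\epsilon}$ once $t-t_k\ge s_0$) and the a priori bound $\mathbb{E}|\langle Y_{t_k}^\epsilon,\psi\rangle|^2\le C\epsilon^{-1}\|\psi\|_{Lip}^2$ of Lemma \ref{YS}, it produces the final term $C_{L_\sigma}|\langle\rho,|\psi|\rangle|\,o(\sqrt\epsilon\,e^{-\gamma_0 s_0/\epsilon})$; the residual stochastic term $-\sum_l\int_{t_k}^t\langle\gamma(x)^{-1}e^{-\gamma(x)(t-u)/\epsilon}\sigma_l(x)\rho(x),\psi\rangle\,dB_u^l$, estimated by the Burkholder--Davis--Gundy inequality with $\int e^{-2\gamma_0(t-u)/\epsilon}\,du\le\tfrac\epsilon{2\gamma_0}$ and $(\mathbf{H_3})$, feeds into the same remainder. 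The transient weights $e^{-\gamma(x)(t-u)/\epsilon}$ multiplying the two drifts are absorbed by $\int e^{-\gamma_0(t-u)/\epsilon}\,du\le\tfrac\epsilon{\gamma_0}$ and, with $(\mathbf{H_1})$, $(\mathbf{H_2})$, Lemma \ref{LVV} and the moment bound of Lemma \ref{SUB}, give the $\epsilon\delta$ terms against $|\langle(K*\rho(x))\rho(x),\psi\rangle|$, $|\langle(1+\|x\|^2)\rho(x),\psi\rangle|$ and $|\langle\rho(x),\psi\rangle|$ (and their $\nabla_x\psi$ analogues); the $\tfrac{t-u}{\epsilon}$-weighted transient from $\nabla_x e^{-\gamma(x)(t-u)/\epsilon}$ is tamed by $\sup_{r\ge0}re^{-\gamma_0 r}<\infty$, and the corrector $(1+\tfrac{\gamma(x)(t-t_k)}{\epsilon})e^{-\gamma(x)(t-t_k)/\epsilon}$ in the identity above yields the $(2\epsilon\delta+\delta^2)$- and $(\epsilon\delta+\delta^2/\epsilon)$-type terms; finally the remainder $\epsilon C(x,\tau)$ of Lemma \ref{LVV} supplies, via $\|C(x,\tau)\|\le M(1+\|x\|^2)$ and Lemma \ref{SUB}, the $\epsilon\delta$ terms carrying the $(1+\|x\|^2)\rho$ weight (against $\psi$ and $\nabla_x\psi$). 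Summing and taking $\sup_{0\le t\le T}\mathbb{E}|\cdot|$ yields the asserted inequality.

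The step I expect to be the main obstacle is the identification of $Y^*$ in the second paragraph, and inside it the divergence term $-\nabla_x\cdot[\rho(x)\mathbb{E}^x(V_\tau^{\epsilon,\mu^\epsilon}\otimes V_\tau^{\epsilon,\mu^\epsilon})]$: this carries no $1/\epsilon$ prefactor, yet $\mathbb{E}^x(V_\tau\otimes V_\tau)$ is of order $1/\epsilon$ by Lemma \ref{LVV}, so it contributes at order $1$ to the averaged flux and must be combined exactly with the $x$-derivative of the frozen weight $e^{-\gamma(x)(t-u)/\epsilon}$. Keeping careful account of the two $x$-derivatives --- one from $\nabla_x\cdot$ acting on $\rho\,\mathbb{E}^x(V\otimes V)$, one from $\nabla_x\gamma$ in the exponent --- and of the integration by parts against $\psi$ is exactly what separates the state-dependent-friction correction into the noise-induced drift $\tfrac12\sum_l\|\sigma_l\|^2\nabla_x\gamma\,\gamma^{-3}\rho$ and the extra diffusive flux $\tfrac12\nabla_x\cdot(\sum_l\|\sigma_l\|^2\rho\gamma^{-2}I_{d\times d})$ in (\ref{EEN}); the remaining bookkeeping, though lengthy, is routine given Lemmas \ref{SUB}, \ref{SC}, \ref{LVV}, \ref{LC1}, \ref{YS} and assumptions $(\mathbf{H_1})$--$(\mathbf{H_4})$.
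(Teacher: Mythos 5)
Your skeleton for identifying $Y^{*}$ is essentially the paper's: Duhamel on the block, integration by parts in $x$ which splits the flux term into the $\frac{t-s}{\epsilon}\nabla_{x}\gamma$-weighted piece and the $\nabla_{x}\psi$-weighted piece, substitution of Lemma \ref{LVV}, the explicit exponential integrals, and Lemma \ref{YS} for the initial datum; your reading of where the four summands of (\ref{EEN}) come from is correct. The genuine gap is that your remainder estimates do not reach the bound asserted in the lemma, at two places. First, the stochastic convolution: after your stochastic Fubini the residual is $-\sum_{l}\int_{t_{k}}^{t}\langle \gamma(x)^{-1}e^{-\frac{\gamma(x)}{\epsilon}(t-u)}\sigma_{l}(x)\rho(x),\psi\rangle dB_{u}^{l}$, and the It\^o isometry/BDG bound you invoke, via $\int_{t_{k}}^{t}e^{-\frac{2\gamma_{0}}{\epsilon}(t-u)}du\leq \frac{\epsilon}{2\gamma_{0}}$, only gives a contribution of order $\sqrt{\epsilon}$ per block; this is \emph{not} $o(\sqrt{\epsilon}e^{-\frac{\gamma_{0}}{\epsilon}s_{0}})$, so the last term of the stated inequality is not recovered. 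The paper obtains the exponentially small remainder by a different device: it represents $\frac{1}{\epsilon}\int_{t_{k}}^{t}\int_{t_{k}}^{s}e^{-\frac{\gamma(x)}{\epsilon}(s-u)}dB_{u}^{l}\,ds$ through the rescaled Ornstein--Uhlenbeck process (\ref{LSTJ}) and the identity $u_{t}^{\epsilon}-u_{t_{k}}^{\epsilon}-\gamma(x)^{-1}(B_{t}^{l}-B_{t_{k}}^{l})=\sqrt{\epsilon}\,\gamma(x)^{-1}(\eta_{t_{k}}^{\epsilon}-\eta_{t}^{\epsilon})$, leading to (\ref{LSTJ1}). The size of this remainder is not a cosmetic issue: in (\ref{FDH}) it is multiplied by roughly $T/\delta$ blocks with $\delta=\mathcal{O}(\epsilon^{3})$, so an $O(\sqrt{\epsilon})$-per-block remainder would blow up there, while the exponentially small one does not. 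You must either reproduce the paper's OU/time-change argument or prove that your weaker remainder still closes the downstream estimate, which with $\delta=\mathcal{O}(\epsilon^{3})$ it does not.

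Second, the initial-datum transient. Your claimed improvement of $\int_{t_{k}}^{t}e^{-\frac{\gamma_{0}}{\epsilon}(s-t_{k})}ds$ to $\frac{\epsilon}{\gamma_{0}}e^{-\frac{\gamma_{0}}{\epsilon}s_{0}}$ ``once $t-t_{k}\geq s_{0}$'' is false: the integrand equals one at $s=t_{k}$, so no exponential factor can be extracted from this integral, and combining the honest bound $\frac{\epsilon}{\gamma_{0}}$ with Lemma \ref{YS} yields only $C\sqrt{\epsilon}$ for this term, with no factor $\delta$ and certainly not $o(\sqrt{\epsilon}e^{-\frac{\gamma_{0}}{\epsilon}s_{0}})$. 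The paper treats this term differently: it writes $e^{-\frac{\gamma(x)}{\epsilon}(s-t_{k})}=\epsilon\cdot\frac{1}{s-t_{k}}\cdot\frac{s-t_{k}}{\epsilon}e^{-\frac{\gamma(x)}{\epsilon}(s-t_{k})}$, uses $\sup_{r\geq0}\frac{r}{\epsilon}e^{-\frac{\gamma_{0}}{\epsilon}r}\leq C_{\gamma_{0}}$ together with the $s_{0}$-dependence to get the pointwise bound (\ref{C1}), keeps the time integration over the block to retain a factor $\delta$, and only then applies Lemma \ref{YS}; this is what produces the first term $\sqrt{\epsilon}\,\delta\,C\|\psi\|_{Lip}$ of the lemma, to which the transient belongs (not to the exponential remainder, as in your accounting). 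As written, therefore, your error budget does not close to the stated inequality even though your identification of $Y^{*}$ and the treatment of the deterministic error terms ($C_{2}$, $C_{3}$, $C_{4}$ in the paper's notation) are in line with the paper.
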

\begin{proof}
By the Duhamel principle, for any $t\in[t_{k},t_{k+1}]$, we have
\begin{eqnarray*}
\tilde{Y}_{t}^{\epsilon}&=&e^{-\frac{\gamma(x)}{\epsilon}(t-t_{k})}Y_{t_{k}}^{\epsilon}+\frac{1}{\epsilon}\int_{t_{k}}^{t}e^{-\frac{\gamma(x)}{\epsilon}(t-s)}(K*\rho(x))\rho(x)ds\\
&&-\int_{t_{k}}^{t}e^{-\frac{\gamma(x)}{\epsilon}(t-s)}\nabla_{x}\cdot[\rho(x)\mathbb{E}^{x}(V_{\tau}^{\epsilon,\mu^{\epsilon}}\otimes V_{\tau}^{\epsilon,\mu^{\epsilon}})]ds+\frac{1}{\epsilon}\sum_{l=1}^{\infty}\int_{t_{k}}^{t}e^{-\frac{\gamma(x)}{\epsilon}(t-s)}\sigma_{l}(x)dB_{s}^{l}\rho(x),
\end{eqnarray*}
then
\begin{eqnarray}\label{YPZ}
\langle\tilde{Y}_{t}^{\epsilon},\psi\rangle&=&\langle e^{-\frac{\gamma(x)}{\epsilon}(t-t_{k})}Y_{t_{k}}^{\epsilon},\psi\rangle+\frac{1}{\epsilon}\int_{t_{k}}^{t}\langle e^{-\frac{\gamma(x)}{\epsilon}(t-s)}(K*\rho(x))\rho(x),\psi\rangle ds\nonumber\\
&&-\int_{t_{k}}^{t}\langle e^{-\frac{\gamma(x)}{\epsilon}(t-s)}\nabla_{x}\cdot[\rho(x)\mathbb{E}^{x}(V_{\tau}^{\epsilon,\mu^{\epsilon}}\otimes V_{\tau}^{\epsilon,\mu^{\epsilon}})],\psi\rangle ds\nonumber\\
&&+\frac{1}{\epsilon}\sum_{l=1}^{\infty}\int_{t_{k}}^{t}\langle e^{-\frac{\gamma(x)}{\epsilon}(t-s)}\sigma_{l}(x)\rho(x),\psi\rangle dB_{s}^{l}\nonumber\\
&=&\langle e^{-\frac{\gamma(x)}{\epsilon}(t-t_{k})}Y_{t_{k}}^{\epsilon},\psi\rangle+\frac{1}{\epsilon}\int_{t_{k}}^{t}\langle e^{-\frac{\gamma(x)}{\epsilon}(t-s)}(K*\rho(x))\rho(x),\psi\rangle ds\nonumber\\
&&+\int_{t_{k}}^{t}\langle \rho(x)\mathbb{E}^{x}(V_{\tau}^{\epsilon,\mu^{\epsilon}}\otimes V_{\tau}^{\epsilon,\mu^{\epsilon}}),e^{-\frac{\gamma(x)}{\epsilon}(t-s)}\frac{t-s}{\epsilon}\nabla_{x}\gamma(x)\psi\rangle ds\nonumber\\
&&-\int_{t_{k}}^{t}\langle \rho(x)\mathbb{E}^{x}(V_{\tau}^{\epsilon,\mu^{\epsilon}}\otimes V_{\tau}^{\epsilon,\mu^{\epsilon}}),e^{-\frac{\gamma(x)}{\epsilon}(t-s)}\nabla_{x}\psi\rangle ds\nonumber\\
&&+\frac{1}{\epsilon}\sum_{l=1}^{\infty}\int_{t_{k}}^{t}\langle e^{-\frac{\gamma(x)}{\epsilon}(t-s)}\sigma_{l}(x)\rho(x),\psi\rangle dB_{s}^{l}.
\end{eqnarray}
Firstly, for any $s_{0}>0$, $s_{0}\leq t\leq T$ and the fact $\sup_{x\geq0}x^{p}e^{-ax}\leq C_{p}a^{-p}$, for any $a>0$, we obtain
\begin{eqnarray}\label{YP1}
\langle e^{-\frac{\gamma(x)}{\epsilon}(t-t_{k})}Y_{t_{k}}^{\epsilon},\psi\rangle&=&\Big\langle \Big(\frac{\epsilon}{t-t_{k}}\Big)\Big(\frac{t-t_{k}}{\epsilon}\Big)e^{-\frac{\gamma(x)}{\epsilon}(t-t_{k})}Y_{t_{k}}^{\epsilon},\psi\Big\rangle\nonumber\\
&\triangleq&\epsilon\langle C_{1}(x,t),\psi\rangle,
\end{eqnarray}
where $C_{1}(x,t)$ satisfies
\begin{eqnarray}\label{C1}
|\langle C_{1}(x,t),\psi\rangle|&\leq& \frac{1}{t-t_{k}}\frac{t-t_{k}}{\epsilon}e^{-\frac{\gamma_{0}}{\epsilon}(t-t_{k})}|\langle Y_{t_{k}}^{\epsilon},\psi\rangle|\nonumber\\
&\leq&C_{s_{0},\gamma_{0}}|\langle Y_{t_{k}}^{\epsilon},\psi\rangle|.
\end{eqnarray}
Direct calculations yield
\begin{eqnarray}\label{YP2}
&&\frac{1}{\epsilon}\int_{t_{k}}^{t}\langle e^{-\frac{\gamma(x)}{\epsilon}(t-s)}(K*\rho(x))\rho(x),\psi\rangle ds\nonumber\\
&=&\langle \gamma(x)^{-1}(1-e^{-\frac{\gamma(x)}{\epsilon}(t-t_{k})})(K*\rho(x))\rho(x),\psi\rangle \nonumber\\
&\triangleq&\langle\gamma(x)^{-1}(K*\rho(x))\rho(x),\psi\rangle+\langle C_{2}(x,t,\epsilon),\psi\rangle,
\end{eqnarray}
where $C_{2}(x,t,\epsilon)$ satisfies
\begin{eqnarray}\label{C2}
|\langle C_{2}(x,t,\epsilon,\rho),\psi\rangle|&=&|\langle \gamma(x)^{-1}e^{-\frac{\gamma(x)}{\epsilon}(t-s)}(K*\rho(x))\rho(x),\psi\rangle|\nonumber\\
&\leq&e^{-\frac{\gamma_{0}}{\epsilon}(t-s)}|\langle \gamma(x)^{-1}(K*\rho(x))\rho(x),\psi\rangle|\nonumber\\
&\leq&\epsilon C_{s_{0},\gamma_{0}}|\langle (K*\rho(x))\rho(x),\psi\rangle|.
\end{eqnarray}
Furthermore, let
\begin{eqnarray*}
Z_{1}^{\epsilon}(t,x)&=&\int_{t_{k}}^{t}\nabla_{x}\gamma(x)e^{-\frac{\gamma(x)}{\epsilon}(t-s)}\frac{t-s}{\epsilon}\mathbb{E}^{x}(V_{\tau}^{\epsilon,\mu^{\epsilon}}\otimes V_{\tau}^{\epsilon,\mu^{\epsilon}})\rho(x)ds\\
&=&\frac{1}{\epsilon^{2}}\nabla_{x}\gamma(x)\int_{t_{k}}^{t}(t-s)e^{-\frac{\gamma(x)}{\epsilon}(t-s)}\Big[\frac{1}{2}\sum_{k=1}^{\infty}\|\sigma_{k}(x)\|^{2}\gamma(x)^{-1}\cdot I_{d\times d}+\epsilon C(x,\tau)\Big]\rho(x)ds.
\end{eqnarray*}
Integration by parts twice yields
\begin{eqnarray}\label{FB}
&&\frac{1}{\epsilon^{2}}\int_{t_{k}}^{t}(t-s)e^{-\frac{\gamma(x)}{\epsilon}(t-s)}ds\nonumber\\
&=&-\gamma(x)^{-1}\frac{t-t_{k}}{\epsilon}e^{-\frac{\gamma(x)}{\epsilon}(t-t_{k})}+\gamma(x)^{-2}-\gamma(x)^{-2}e^{-\frac{\gamma(x)}{\epsilon}(t-t_{k})}\nonumber\\
&=&\gamma(x)^{-2}-\Big(\gamma(x)^{-1}\frac{t-t_{k}}{\epsilon}e^{-\frac{\gamma(x)}{\epsilon}(t-t_{k})}+\gamma(x)^{-2}e^{-\frac{\gamma(x)}{\epsilon}(t-t_{k})}\Big).
\end{eqnarray}
Then
\begin{eqnarray}\label{YP3}
Z_{1}^{\epsilon}(t,x)&=&\nabla_{x}\gamma(x)\Big[\gamma(x)^{-2}-\Big(\gamma(x)^{-1}\frac{t-t_{k}}{\epsilon}e^{-\frac{\gamma(x)}{\epsilon}(t-t_{k})}+\gamma(x)^{-2}e^{-\frac{\gamma(x)}{\epsilon}(t-t_{k})}\Big)\Big]\nonumber\\
&&\cdot\Big[\frac{1}{2}\sum_{l=1}^{\infty}\|\sigma_{l}(x)\|^{2}\gamma(x)^{-1}\cdot I_{d\times d}+\epsilon C(x,\tau)\Big]\rho(x)\nonumber\\
&=&\frac{1}{2}\sum_{l=1}^{\infty}\|\sigma_{l}(x)\|^{2}\nabla_{x}\gamma(x)\gamma(x)^{-3}\rho(x)+\Big[\epsilon\nabla_{x}\gamma(x)\gamma(x)^{-2}C(x,\tau)\rho(x)\nonumber\\
&&\quad-\nabla_{x}\gamma(x)\Big(\gamma(x)^{-1}\frac{t-t_{k}}{\epsilon}e^{-\frac{\gamma(x)}{\epsilon}(t-t_{k})}+\gamma(x)^{-2}e^{-\frac{\gamma(x)}{\epsilon}(t-t_{k})}\Big)\nonumber\\
&&\quad\cdot\Big(\frac{1}{2}\sum_{l=1}^{\infty}\|\sigma_{l}(x)\|^{2}\gamma(x)^{-1}\cdot I_{d\times d}+\epsilon C(x,\tau)\rho(x)\Big)\Big]\nonumber\\
&\triangleq&\frac{1}{2}\sum_{l=1}^{\infty}\|\sigma_{l}(x)\|^{2}\nabla_{x}\gamma(x)\gamma(x)^{-3}\rho(x)+C_{3}(x,\tau,t,\epsilon,\rho),
\end{eqnarray}
and
\begin{eqnarray}\label{YP3-1}
\langle Z_{1}^{\epsilon}(t,x),\psi\rangle=\Big\langle \frac{1}{2}\sum_{l=1}^{\infty}\|\sigma_{l}(x)\|^{2}\nabla_{x}\gamma(x)\gamma(x)^{-3}\rho(x),\psi\Big\rangle+\langle C_{3}(x,\tau,t,\epsilon,\rho),\psi\rangle,
\end{eqnarray}
where $C_{3}(x,\tau,t,\epsilon,\rho)$ satisfies
\begin{eqnarray}
|\langle C_{3}(x,\tau,t,\epsilon,\rho),\psi\rangle|&\leq&|\epsilon\langle\nabla_{x}\gamma(x)\gamma(x)^{-2}C(x,\tau)\rho(x),\psi\rangle|\nonumber\\
&&+\Big|\Big\langle \nabla_{x}\gamma(x)\gamma(x)^{-1}\frac{t-t_{k}}{\epsilon}e^{-\frac{\gamma(x)}{\epsilon}(t-t_{k})}\frac{1}{2}\sum_{l=1}^{\infty}\|\sigma_{l}(x)\|^{2}\gamma(x)^{-1}\rho(x),\psi\Big\rangle\Big|\nonumber
\end{eqnarray}
\begin{eqnarray}\label{C3}
&&+\Big|\Big\langle \nabla_{x}\gamma(x)\gamma(x)^{-1}\frac{t-t_{k}}{\epsilon}e^{-\frac{\gamma(x)}{\epsilon}(t-t_{k})}\epsilon C(x,\tau)\rho(x),\psi\Big\rangle\Big|\nonumber\\
&&+\Big|\Big\langle \nabla_{x}\gamma(x)\gamma(x)^{-2}e^{-\frac{\gamma(x)}{\epsilon}(t-t_{k})}\frac{1}{2}\sum_{l=1}^{\infty}\|\sigma_{l}(x)\|^{2}\gamma(x)^{-1}\rho(x),\psi\Big\rangle\Big|\nonumber\\
&&+|\langle \nabla_{x}\gamma(x)\gamma(x)^{-2}e^{-\frac{\gamma(x)}{\epsilon}(t-t_{k})}\epsilon C(x,\tau)\rho(x),\psi\rangle|\nonumber\\
&\leq&\epsilon |\langle\nabla_{x}\gamma(x)\gamma(x)^{-2}C(x,\tau)\rho(x),\psi\rangle|\nonumber\\
&&+\frac{1}{2}\frac{t-t_{k}}{\epsilon}e^{-\frac{\gamma_{0}}{\epsilon}(t-t_{k})}\sup_{x}\sum_{l=1}^{\infty}\|\sigma_{l}(x)\|^{2}|\langle \nabla_{x}\gamma(x)\gamma(x)^{-2}\rho(x),\psi\rangle|\nonumber\\
&&+\frac{t-t_{k}}{\epsilon}e^{-\frac{\gamma_{0}}{\epsilon}(t-t_{k})}\epsilon |\langle \nabla_{x}\gamma(x)\gamma(x)^{-2}\rho(x)C(x,\tau),\psi\rangle|\nonumber\\
&&+\frac{1}{2}\sup_{x}\sum_{l=1}^{\infty}\|\sigma_{l}(x)\|^{2}e^{-\frac{\gamma_{0}}{\epsilon}(t-t_{k})}|\langle \nabla_{x}\gamma(x)\gamma(x)^{-3}\rho(x),\psi\rangle|\nonumber\\
&&+\epsilon e^{-\frac{\gamma_{0}}{\epsilon}(t-t_{k})}|\langle \nabla_{x}\gamma(x)\gamma(x)^{-2}C(x,\tau)\rho(x),\psi\rangle|\nonumber\\
&\leq&(2\epsilon+\delta)C_{M,\gamma_{0},L_{\gamma}}|\langle(1+\|x\|^{2})\rho(x),\psi\rangle|+\Big(\epsilon+\frac{\delta}{\epsilon}\Big)C_{\gamma_{0},s_{0},L_{\sigma},L_{\gamma}}|\langle\rho(x),\psi\rangle|.\end{eqnarray}
Similarly, by Lemma \ref{LVV},
\begin{eqnarray*}
Z_{2}^{\epsilon}(t,x)&=&\int_{t_{k}}^{t}e^{-\frac{\gamma(x)}{\epsilon}(t-s)}\rho(x)\mathbb{E}^{x}(V_{\tau}^{\epsilon,\mu^{\epsilon}}\otimes V_{\tau}^{\epsilon,\mu^{\epsilon}})ds\\
&=&\gamma(x)^{-1}(1-e^{-\frac{\gamma(x)}{\epsilon}(t-t_{k})})\rho(x)\mathbb{E}^{x}(\epsilon V_{\tau}^{\epsilon,\mu^{\epsilon}}\otimes V_{\tau}^{\epsilon,\mu^{\epsilon}})\\
&=&\frac{1}{2}\sum_{l=1}^{\infty}\|\sigma_{l}(x)\|^{2}\rho(x)\gamma(x)^{-2}\cdot I_{d\times d}+\epsilon\gamma(x)^{-1}\rho(x)C(x,\tau)\\
&&-\gamma(x)^{-1}e^{-\frac{\gamma(x)}{\epsilon}(t-t_{k})}\Big[\frac{1}{2}\sum_{k=1}^{\infty}\|\sigma_{k}(x)\|^{2}\rho(x)\gamma(x)^{-1}\cdot I_{d\times d}+\epsilon\rho(x)C(x,\tau)\Big]\\
&\triangleq&\frac{1}{2}\sum_{k=1}^{\infty}\|\sigma_{k}(x)\|^{2}\rho(x)\gamma(x)^{-2}\cdot I_{d\times d}+C_{4}(x,\tau,t,\epsilon,\rho),
\end{eqnarray*}
and
\begin{eqnarray}\label{YP4}
\langle Z_{2}^{\epsilon}(t,x),\nabla_{x}\psi\rangle=\frac{1}{2}\Big\langle\sum_{k=1}^{\infty}\|\sigma_{k}(x)\|^{2} \rho(x)\gamma(x)^{-2}I_{d\times d},\nabla_{x}\psi\Big\rangle+\langle C_{4}(x,\tau,t,\epsilon,\rho),\nabla_{x}\psi\rangle,
\end{eqnarray}
where $C_{4}(x,\tau,t,\epsilon,\rho)$ satisfies
\begin{eqnarray}
|\langle C_{4}(x,\tau,t,\epsilon,\rho),\nabla_{x}\psi\rangle|&\leq& \epsilon|\langle \gamma(x)^{-1}\rho(x)C(x,\tau),\nabla_{x}\psi\rangle|\nonumber\\
&&+\Big|\Big\langle \gamma(x)^{-1}e^{-\frac{\gamma(x)}{\epsilon}(t-t_{k})}\rho(x)\frac{1}{2}\sum_{k=1}^{\infty}\|\sigma_{k}(x)\|^{2}\gamma(x)^{-1},\nabla_{x}\psi\Big\rangle\Big|\nonumber\\
&&+\epsilon|\langle \gamma(x)^{-1}\rho(x)e^{-\frac{\gamma(x)}{\epsilon}(t-t_{k})}C(x,\tau),\nabla_{x}\psi\rangle|\nonumber
\end{eqnarray}
\begin{eqnarray}\label{C4}
&\leq&\epsilon M|\langle \gamma(x)^{-1}\rho(x)(1+\|x\|^{2}),\nabla_{x}\psi\rangle|\nonumber\\
&&+\frac{1}{2}\sup_{x}\sum_{l=1}^{\infty}\|\sigma_{l}(x)\|^{2}e^{-\frac{\gamma_{0}}{\epsilon}(t-t_{k})}|\langle \rho(x)\gamma(x)^{-2},\nabla_{x}\psi\rangle|\nonumber\\
&&+M\epsilon e^{-\frac{\gamma_{0}}{\epsilon}(t-t_{k})}|\langle \gamma(x)^{-1}\rho(x)(1+\|x\|^{2}),\nabla_{x}\psi\rangle|\nonumber\\
&\leq&\epsilon C_{M,\gamma_{0}}|\langle (1+\|x\|^{2})\rho(x),\nabla_{x}\psi\rangle|+\epsilon C_{\gamma_{0},s_{0},L_{\sigma}}|\langle \rho(x),\nabla_{x}\psi\rangle|.
\end{eqnarray}
Furthermore, let 
\begin{eqnarray}\label{YP5}
Z_{3}^{\epsilon}(t,x)&=&\frac{1}{\epsilon}\sum_{l=1}^{\infty}\int_{t_{k}}^{t}\langle e^{-\frac{\gamma(x)}{\epsilon}(t-s)}\sigma_{l}(x)\rho(x),\psi\rangle dB_{s}^{l}\nonumber\\
&=&\sum_{l=1}^{\infty}\int_{\mathbb{R}^{d}}\Big(\frac{1}{\epsilon}\int_{t_{k}}^{t}e^{-\frac{\gamma(x)}{\epsilon}(t-s)}dB_{s}^{l}\Big)\sigma_{l}(x)\rho(x)\psi(x)dx.
\end{eqnarray}
Considering the stationary process 
\begin{eqnarray*}
d\eta_{t}=-\gamma(x)\eta_{t}dt+dB_{t}^{l},
\end{eqnarray*}
 then by a time scale change $t\to\frac{t}{\epsilon}$, we have 
 \begin{eqnarray}\label{LSTJ}
 d\eta^{\epsilon}_{t}=-\frac{\gamma(x)}{\epsilon}\eta^{\epsilon}_{t}dt+\frac{1}{\sqrt{\epsilon}}dB_{t}^{l}.
 \end{eqnarray}
 Define $\dot{u}^{\epsilon}_{t}=\frac{1}{\sqrt{\epsilon}}\eta^{\epsilon}_{t}$. Then direct calculation yields $u^{\epsilon}_{t}-u^{\epsilon}_{t_{k}}=\frac{1}{\sqrt{\epsilon}}\int_{t_{k}}^{t}\eta^{\epsilon}_{s}ds$ 
and 
\begin{eqnarray*}
u^{\epsilon}_{t}-u^{\epsilon}_{t_{k}}-\frac{1}{\gamma(x)}(B^{l}_{t}-B^{l}_{t_{k}})=\sqrt{\epsilon}\Big(\frac{-1}{\gamma(x)}\eta^{\epsilon}_{t}+\frac{1}{\gamma(x)}\eta^{\epsilon}_{t_{k}}\Big).
\end{eqnarray*}
Furthermore, for (\ref{LSTJ}), by a energy method and $(\mathbf{H_{1}})$, we obtain, for every $0\leq t\leq T$, $\mathbb{E}|\eta^{\epsilon}_{t}|^{2}\leq e^{-\frac{2\gamma_{0}}{\epsilon}t}\mathbb{E}|\eta^{\epsilon}_{0}|^{2}$. Thus, 
\begin{eqnarray*}
\frac{1}{\epsilon}\int_{t_{k}}^{t}\int_{t_{k}}^{s}e^{-\frac{\gamma(x)}{\epsilon}(s-u)}dB_{u}^{l}ds-\frac{1}{\gamma(x)}(B^{l}_{t}-B^{l}_{t_{k}})&=&\frac{1}{\sqrt{\epsilon}}\int_{t_{k}}^{t}e^{-\frac{\gamma(x)}{\epsilon}(s-t_{k})}ds\cdot\eta^{\epsilon}_{t_{k}}\\
&&+\sqrt{\epsilon}\Big(\frac{-1}{\gamma(x)}\eta^{\epsilon}_{t}+\frac{1}{\gamma(x)}\eta^{\epsilon}_{t_{k}}\Big). 
\end{eqnarray*}
Note that $\lim_{\epsilon\to0}\frac{1}{\sqrt{\epsilon}}\int_{t_{k}}^{t}e^{-\frac{\gamma(x)}{\epsilon}(s-t_{k})}ds\cdot\eta^{\epsilon}_{t_{k}}=0$ in $L^{2}(\Omega)$, then 
\begin{eqnarray*}\label{LSTJ1}
&&\mathbb{E}\Big|\frac{1}{\epsilon}\int_{t_{k}}^{t}\int_{t_{k}}^{s}e^{-\frac{\gamma(x)}{\epsilon}(s-u)}dB_{u}^{l}ds-\frac{1}{\gamma(x)}(B^{l}_{t}-B^{l}_{t_{k}})\Big|^{2}\nonumber\\
&&\leq \frac{\epsilon}{\gamma_{0}^{2}}e^{-\frac{2\gamma_{0}}{\epsilon}t}\mathbb{E}|\eta^{\epsilon}_{0}|^{2}+\frac{\epsilon}{\gamma_{0}^{2}}e^{-\frac{2\gamma_{0}}{\epsilon}t_{k}}\mathbb{E}|\eta^{\epsilon}_{0}|^{2}\leq C_{\gamma_{0},\eta_{0}}\epsilon e^{-\frac{2\gamma_{0}}{\epsilon}s_{0}}\to0,\quad \epsilon\to0,
\end{eqnarray*}
that is, as $\epsilon\to0$, in the mean square sense,
\begin{eqnarray}\label{LSTJ1}
\frac{1}{\epsilon}\int_{t_{k}}^{t}\int_{t_{k}}^{s}e^{-\frac{\gamma(x)}{\epsilon}(s-u)}dB_{u}^{l}ds=\frac{1}{\gamma(x)}(B^{l}_{t}-B^{l}_{t_{k}})+o(\epsilon e^{-\frac{2\gamma_{0}}{\epsilon}s_{0}}),
\end{eqnarray}
where $o(\epsilon e^{-\frac{2\gamma_{0}}{\epsilon}s_{0}})$ is an infinitely small quantity of $\epsilon$.

Then combining (\ref{YPZ}) with (\ref{YP1}), (\ref{YP2}), (\ref{YP3-1}), (\ref{YP4}) and (\ref{LSTJ1}),
\begin{eqnarray}
&&\int_{t_{k}}^{t}\langle\tilde{Y}_{s}^{\epsilon},\psi\rangle ds\nonumber\\
&=&\epsilon^{2}\int_{t_{k}}^{t}\langle C_{1}(x,s),\psi\rangle ds+\int_{t_{k}}^{t}\langle\gamma(x)^{-1}(K*\rho(x))\rho(x),\psi\rangle ds+\int_{t_{k}}^{t}\langle C_{2}(x,s,\epsilon,\rho),\psi\rangle ds\nonumber\\
&&+\int_{t_{k}}^{t}\Big\langle \frac{1}{2}\sum_{k=1}^{\infty}\|\sigma_{k}(x)\|^{2}\nabla_{x}\gamma(x)\gamma(x)^{-3}\rho(x),\psi\Big\rangle ds+\int_{t_{k}}^{t}\langle C_{3}(x,\tau,s,\epsilon,\rho),\psi\rangle ds\nonumber\\
&&-\int_{t_{k}}^{t}\Big\langle\frac{1}{2}\sum_{k=1}^{\infty}\|\sigma_{k}(x)\|^{2} \rho(x)\gamma(x)^{-2}I_{d\times d},\nabla_{x}\psi\Big\rangle ds-\int_{t_{k}}^{t}\langle C_{4}(x,\tau,s,\epsilon,\rho),\nabla_{x}\psi\rangle ds\nonumber\\
&&+\sum_{l=1}^{\infty}\langle\sigma_{l}(x)\rho(x)\gamma(x)^{-1},\psi\rangle\cdot(B_{t}^{l}-B_{t_{k}}^{l})+\sum_{l=1}^{\infty}\langle\sigma_{l}(x)\rho(x),\psi\rangle\cdot o(\epsilon e^{-\frac{2\gamma_{0}}{\epsilon}s_{0}}).
\end{eqnarray}
Let
\begin{eqnarray}
Y^{*}_{t}(x)&=&\gamma(x)^{-1}(K*\rho(x))\rho(x)+\frac{1}{2}\sum_{l=1}^{\infty}\|\sigma_{l}(x)\|^{2}\nabla_{x}\gamma(x)\gamma(x)^{-3}\rho(x)\nonumber\\
&&+\nabla_{x}\cdot\Big(\frac{1}{2}\sum_{l=1}^{\infty}\|\sigma_{l}(x)\|^{2}\rho(x)\gamma(x)^{-2}I_{d\times d}\Big)+\sum_{l=1}^{\infty}\sigma_{l}(x)\rho(x)\gamma(x)^{-1}\dot{B}_{t}^{l},
\end{eqnarray}
then 
\begin{eqnarray}\label{YTY*}
\mathbb{E}\Big|\int_{t_{k}}^{t}\langle \tilde{Y}_{s}^{\epsilon}-Y^{*}_{s},\psi\rangle ds\Big|&\leq& \epsilon^{2}\int_{t_{k}}^{t}|\langle C_{1}(x,s),\psi\rangle|ds+\int_{t_{k}}^{t}|\langle C_{2}(x,s,\epsilon,\rho),\psi\rangle|ds\nonumber\\
&+&\int_{t_{k}}^{t}|\langle C_{3}(x,\tau,s,\epsilon,\rho),\psi\rangle|ds+\int_{t_{k}}^{t}|\langle C_{4}(x,\tau,s,\epsilon,\rho),\psi\rangle|ds\nonumber\\
&+&\sum_{l=1}^{\infty}|\langle\sigma_{l}(x)\rho(x),\psi\rangle|\cdot o(\sqrt{\epsilon} e^{-\frac{\gamma_{0}}{\epsilon}s_{0}}).
\end{eqnarray}
Thus, by (\ref{YTY*}), (\ref{C1}), (\ref{C2}), (\ref{C3}), (\ref{C4}), together with (\ref{equ:K1}), $(\mathbf{H_{2}})$, Lemma \ref{YS} and the Gaussiality of $\rho$, we have
\begin{eqnarray*}
\mathbb{E}\Big|\int_{t_{k}}^{t}\langle\tilde{Y}_{s}^{\epsilon}-Y^{*}_{s},\psi\rangle ds\Big|&\leq&\sqrt{\epsilon}\delta C_{s_{0},\gamma_{0},M,L_{K},L_{\sigma},X_{0},V_{0}}\|\psi\|_{Lip}+\epsilon\delta C_{s_{0},\gamma_{0}}|\langle (K*\rho(x))\rho(x),\psi\rangle|\nonumber\\
&&+(2\epsilon\delta+\delta^{2})C_{M,\gamma_{0},L_{\gamma}}|\langle(1+\|x\|^{2})\rho(x),\psi\rangle|\nonumber\\
&&+\Big(\epsilon\delta+\frac{\delta^{2}}{\epsilon}\Big)C_{\gamma_{0},s_{0},L_{\sigma},L_{\gamma}}|\langle\rho(x),\psi\rangle|\nonumber\\
&&+\epsilon\delta C_{M,\gamma_{0}}|\langle (1+\|x\|^{2})\rho(x),\nabla_{x}\psi\rangle|+\epsilon\delta C_{\gamma_{0},s_{0},L_{\sigma}}|\langle \rho(x),\nabla_{x}\psi\rangle|\nonumber\\
&&+C_{L_{\sigma}}|\langle\rho,|\psi|\rangle|o(1,\sqrt{\epsilon} e^{-\frac{\gamma_{0}}{\epsilon}s_{0}}).
\end{eqnarray*}
\end{proof}
Now we are in the position to prove our main result Theorem \ref{main2}. 
Without loss of generality, by the tightness of $\{X_{t}^{\epsilon,\mu^{\epsilon}}\}$, we assume $\{\rho_{\cdot}^{\epsilon}\}$ converges weakly to $\{\rho_{\cdot}\}$ in space $C(0,T;\mathbb{R}^{d})$ as $\epsilon\to0$. (If necessary one can choose a subsequence of $\{\rho_{\cdot}^{\epsilon}\}$). Now we determine the equation satisfied by $\rho_{\cdot}$. Firstly we rewrite $Y^{*}$ as $Y^{*,\rho}$ to emphasize the dependence on $\rho$ and $\tilde{Y}_{t}^{\epsilon,\rho,\tau}$ as the solution to (\ref{FU2}). Then on the interval $[t_{k},t_{k+1}]$, $\{\hat{Y}_{t}^{\epsilon}\}=\{\tilde{Y}_{t}^{\epsilon,\rho_{t_{k}}^{\epsilon},t_{k}}\}$. Now, let $\psi=\nabla_{x}\phi,\phi\in C_{0}^{\infty}(\mathbb{R}^{d})$, then
\begin{eqnarray}
&&\langle \rho_{t}^{\epsilon},\phi\rangle=\langle  \rho_{0},\phi\rangle+\int_{0}^{t}\langle Y_{s}^{\epsilon},\psi\rangle ds=\langle  \rho_{0},\phi\rangle+\int_{0}^{t}\langle Y^{*, \rho_{s}^{\epsilon}},\psi\rangle ds\nonumber\\
&&+\int_{0}^{t}\langle Y_{s}^{\epsilon}-\hat{Y}_{s}^{\epsilon},\psi\rangle ds+\int_{0}^{t}\langle \hat{Y}_{s}^{\epsilon}-Y^{*, \rho_{s}^{\epsilon}},\psi\rangle ds\nonumber\\
&&=\langle  \rho_{0},\phi\rangle+\int_{0}^{t}\langle Y^{*,\rho_{s}^{\epsilon}},\psi\rangle ds+\int_{0}^{t}\langle Y_{s}^{\epsilon}-\hat{Y}_{s}^{\epsilon},\psi\rangle ds\nonumber\\
&&+\sum_{k=0}^{\lfloor \frac{t}{\delta}\rfloor-1}\int_{t_{k}}^{t_{k+1}}\langle \tilde{Y}_{s}^{\epsilon,\rho_{t_{k}}^{\epsilon},t_{k}}-Y^{*,\rho_{t_{k}}^{\epsilon}}, \psi\rangle ds+\int_{t_{\lfloor \frac{t}{\delta}\rfloor}}^{t}\langle\tilde{Y}_{s}^{\epsilon,\rho^{\epsilon}_{t_{\lfloor \frac{t}{\delta}\rfloor}},t_{\lfloor \frac{t}{\delta}\rfloor}}-Y^{*,\rho^{\epsilon}_{t_{\lfloor \frac{t}{\delta}\rfloor}}},\psi\rangle ds\nonumber\\
&&+\sum_{k=0}^{\lfloor \frac{t}{\delta}\rfloor-1}\int_{t_{k}}^{t_{k+1}}\langle Y^{*, \rho_{t_{k}}^{\epsilon}}-Y^{*, \rho_{s}^{\epsilon}},\psi\rangle ds+\int_{t_{\lfloor \frac{t}{\delta}\rfloor}}^{t}\langle Y^{*, \rho_{t_{\lfloor \frac{t}{\delta}\rfloor}}^{\epsilon}}-Y^{*, \rho_{s}^{\epsilon}},\psi\rangle ds.
\end{eqnarray}
Firstly, note that
\begin{eqnarray}\label{F4} 
 &&\int_{0}^{t}\langle Y^{*, \rho_{s}^{\epsilon}}_{s},\psi\rangle ds=\sum_{k=0}^{\lfloor \frac{t}{\delta}\rfloor-1}\int_{t_{k}}^{t_{k+1}}\langle Y^{*,\rho_{t_{k}}^{\epsilon}}_{s},\psi\rangle ds+\int_{t_{\lfloor \frac{t}{\delta}\rfloor}}^{t}\langle Y^{*,\rho^{\epsilon}_{t_{\lfloor \frac{t}{\delta}\rfloor}}}_{s},\psi\rangle ds\nonumber\\
&=&\int_{0}^{t}\Big[\langle\gamma(x)^{-1}(K*\rho_{s}^{\epsilon}(x))\rho_{s}^{\epsilon}(x),\psi\rangle+\langle\frac{1}{2}\sum_{l=1}^{\infty}\|\sigma_{l}(x)\|^{2}\nabla_{x}\gamma(x)\gamma(x)^{-3}\rho_{s}^{\epsilon}(x),\psi\rangle\nonumber\\
&&-\langle\frac{1}{2}\sum_{l=1}^{\infty}\|\sigma_{l}(x)\|^{2}\rho_{s}^{\epsilon}(x)\gamma(x)^{-2}I_{d\times d},\nabla_{x}\psi\rangle\Big]ds+\langle\sum_{l=1}^{\infty}\sigma_{l}(x)\rho^{\epsilon}_{t_{\lfloor \frac{t}{\delta}\rfloor}}\gamma(x)^{-1}(B_{t}^{l}-B_{t_{\lfloor \frac{t}{\delta}\rfloor}}^{l}),\psi\rangle\nonumber\\
&&+\langle\sum_{l=1}^{\infty}\sigma_{l}(x)\gamma(x)^{-1}\sum_{k=0}^{\lfloor \frac{t}{\delta}\rfloor-1}\rho_{t_{k}}^{\epsilon}(x)(B_{t_{k+1}}^{l}-B_{t_{k}}^{l}),\psi\rangle.
\end{eqnarray} 
By the definition of $Y^{*,\rho}$ in Lemma \ref{YR}, and the tightness of $\{X_{t}^{\epsilon,\mu^{\epsilon}}\}$ in space $C(0,T;\mathbb{R}^{d})$, for any sequence of $\{X_{t}^{\epsilon,\mu^{\epsilon}}\}$, there exists a subsequence denoted still by $\{X_{t}^{\epsilon,\mu^{\epsilon}}\}$ such that $\{X_{t}^{\epsilon,\mu^{\epsilon}}\}$ converges in distribution to $X(t)$, $\epsilon\to0$. By Skorohod theorem, we still can construct a new probability space and random variables without changing the distribution such that (here we don't changing the notations) $\{X_{t}^{\epsilon,\mu^{\epsilon}}\}$ converges almost surely to $X(t)$. Then by assumptions $(\mathbf{H_{1}})$-$(\mathbf{H_{4}})$ and the conditional dominate convergence theorem, we obtain
\begin{eqnarray}\label{F4-1} 
&&\int_{0}^{t}\Big[\langle\gamma(x)^{-1}(K*\rho_{s}^{\epsilon}(x))\rho_{s}^{\epsilon}(x),\psi\rangle+\langle\frac{1}{2}\sum_{l=1}^{\infty}\|\sigma_{l}(x)\|^{2}\nabla_{x}\gamma(x)\gamma(x)^{-3}\rho_{s}^{\epsilon}(x),\psi\rangle\\
&&\quad\quad-\langle\frac{1}{2}\sum_{l=1}^{\infty}\|\sigma_{l}(x)\|^{2}\rho_{s}^{\epsilon}(x)\gamma(x)^{-2}I_{d\times d},\nabla_{x}\psi\rangle\Big]ds\to\int_{0}^{t}\Big[\langle\gamma(x)^{-1}(K*\rho_{s}(x))\rho_{s}(x),\psi\rangle\nonumber\\
&&\quad\quad+\langle\frac{1}{2}\sum_{l=1}^{\infty}\|\sigma_{l}(x)\|^{2}\nabla_{x}\gamma(x)\gamma(x)^{-3}\rho_{s}(x),\psi\rangle-\langle\frac{1}{2}\sum_{l=1}^{\infty}\|\sigma_{l}(x)\|^{2}\rho_{s}(x)\gamma(x)^{-2}I_{d\times d},\nabla_{x}\psi\rangle\Big]ds.\nonumber
\end{eqnarray} 
Further, by the integrability of $\rho_{\cdot}^{\epsilon}$ and the definition of It${\rm\hat{o}}$'s integration, in the mean square sense, 
\begin{eqnarray*}
\sum_{k=0}^{\lfloor \frac{t}{\delta}\rfloor-1}\rho_{t_{k}}^{\epsilon}(x)(B_{t_{k+1}}^{l}-B_{t_{k}}^{l})\to\int_{0}^{t}\rho_{s}^{\epsilon}(x)dB_{s}^{l}, \quad\delta\to0,
\end{eqnarray*}
that is, as $\delta\to0$, in the mean square sense,
\begin{eqnarray}\label{F4-2} 
\sum_{k=0}^{\lfloor \frac{t}{\delta}\rfloor-1}\rho_{t_{k}}^{\epsilon}(x)(B_{t_{k+1}}^{l}-B_{t_{k}}^{l})=\int_{0}^{t}\rho_{s}^{\epsilon}(x)dB_{s}^{l}+o(\delta),
\end{eqnarray} 
where $o(\delta)$ is an infinitely small quantity of $\delta$.
Then
\begin{eqnarray}\label{F4-3} 
&&\langle\sum_{l=1}^{\infty}\sigma_{l}(x)\gamma(x)^{-1}\sum_{k=0}^{\lfloor \frac{t}{\delta}\rfloor-1}\rho_{t_{k}}^{\epsilon}(x)(B_{t_{k+1}}^{l}-B_{t_{k}}^{l}),\psi\rangle\\
&=&\sum_{l=1}^{\infty}\int_{0}^{t}\langle\sigma_{l}(x)\gamma(x)^{-1}\rho_{s}^{\epsilon}(x),\psi\rangle dB_{s}^{l}+\sum_{l=1}^{\infty}\langle\sigma_{l}(x)\gamma(x)^{-1},\psi\rangle\cdot o(\delta).\nonumber
\end{eqnarray} 
By $(\mathbf{H_{2}})$ and $(\mathbf{H_{3}})$,
\begin{eqnarray}\label{F4-4} 
\sum_{l=1}^{\infty}|\langle\sigma_{l}(x)\gamma(x)^{-1},\psi\rangle|\cdot o(\delta)\leq C_{\gamma_{0},L_{\sigma}} \|\psi\|_{Lip}\cdot o(\delta).
\end{eqnarray}
Furthermore, by the H${\rm\ddot{o}}$lder inequality, 
\begin{eqnarray}
&&\mathbb{E}\sum_{l=1}^{\infty}|\langle\sigma_{l}(x)\rho^{\epsilon}_{t_{\lfloor \frac{t}{\delta}\rfloor}}\gamma(x)^{-1}(B_{t}^{l}-B_{t_{\lfloor \frac{t}{\delta}\rfloor}}^{l}),\psi\rangle|\nonumber\\
&\leq& \sum_{l=1}^{\infty}(\mathbb{E}|\langle\sigma_{l}(x)\rho^{\epsilon}_{t_{\lfloor \frac{t}{\delta}\rfloor}}\gamma(x)^{-1},\psi\rangle|^{2})^{\frac{1}{2}}(\mathbb{E}|B_{t}^{l}-B_{t_{\lfloor \frac{t}{\delta}\rfloor}}^{l}|^{2})^{\frac{1}{2}}\nonumber
\end{eqnarray}
\begin{eqnarray}\label{F4-5}
&\leq&\sum_{l=1}^{\infty}(\mathbb{E}|\mathbb{E}[\sigma_{l}(X_{t_{\lfloor \frac{t}{\delta}\rfloor}}^{\epsilon,\mu^{\epsilon}})\gamma(X_{t_{\lfloor \frac{t}{\delta}\rfloor}}^{\epsilon,\mu^{\epsilon}})^{-1}\psi(X_{t_{\lfloor \frac{t}{\delta}\rfloor}}^{\epsilon,\mu^{\epsilon}})|\mathcal{F}_{t_{\lfloor \frac{t}{\delta}\rfloor}}^{B}]|^{2})^{\frac{1}{2}}(|t-t_{\lfloor \frac{t}{\delta}\rfloor}|)^{\frac{1}{2}}\nonumber\\
&\leq&\sum_{l=1}^{\infty}(\mathbb{E}|\sigma_{l}(X_{t_{\lfloor \frac{t}{\delta}\rfloor}}^{\epsilon,\mu^{\epsilon}})\gamma(X_{t_{\lfloor \frac{t}{\delta}\rfloor}}^{\epsilon,\mu^{\epsilon}})^{-1}\psi(X_{t_{\lfloor \frac{t}{\delta}\rfloor}}^{\epsilon,\mu^{\epsilon}})|^{2})^{\frac{1}{2}}\delta^{\frac{1}{2}}\nonumber\\
&\leq&\frac{1}{\gamma_{0}}\|\psi\|_{Lip}(\sum_{l=1}^{\infty}\sup_{x\in\mathbb{R}^{d}}\|\sigma_{l}(x)\|^{2})^{\frac{1}{2}}\delta^{\frac{1}{2}}\leq C_{\gamma_{0},L_{\sigma}}\|\psi\|_{Lip}\delta^{\frac{1}{2}}.
\end{eqnarray}
By (\ref{F4})-(\ref{F4-5}), we have
\begin{eqnarray}\label{F4-6}
\int_{0}^{t}\langle Y^{*, \rho_{s}^{\epsilon}}_{s},\psi\rangle ds\to\int_{0}^{t}\langle Y^{*, \rho_{s}}_{s},\psi\rangle ds,\quad \epsilon\to0, \delta\to0,
\end{eqnarray}
where
\begin{eqnarray*}
Y^{*, \rho_{s}}_{s}&=&\gamma(x)^{-1}(K*\rho_{s}(x))\rho_{s}(x)+\frac{1}{2}\sum_{l=1}^{\infty}\|\sigma_{l}(x)\|^{2}\nabla_{x}\gamma(x)\gamma(x)^{-3}\rho_{s}(x)\\
&&+\nabla_{x}\cdot\Big(\frac{1}{2}\sum_{l=1}^{\infty}\|\sigma_{l}(x)\|^{2}\rho_{s}(x)\gamma(x)^{-2}\cdot I_{d\times d}\Big)+\sum_{l=1}^{\infty}\sigma_{l}(x)\rho_{s}(x)\gamma(x)^{-1}\dot{B}_{s}^{l}.
\end{eqnarray*}
Further, by Lemma \ref{LC1},
\begin{eqnarray}\label{LL1}
\mathbb{E}\Big|\int_{0}^{t}\langle Y_{s}^{\epsilon}-\hat{Y}_{s}^{\epsilon},\psi\rangle ds\Big|&\leq& C_{T}\Big(\frac{\sqrt{\delta}}{\epsilon}+\frac{\delta}{\epsilon}+\frac{\delta}{\epsilon^{2}}\Big)\|\psi\|_{Lip}.
\end{eqnarray}
Note that by Lemma \ref{YR},
\begin{eqnarray}\label{FDH}
&&\mathbb{E}\Big|\sum_{k=0}^{\lfloor \frac{t}{\delta}\rfloor-1}\int_{t_{k}}^{t_{k+1}}\langle \tilde{Y}_{s}^{\epsilon,\rho_{t_{k}}^{\epsilon},t_{k}}-Y^{*,\rho_{t_{k}}^{\epsilon}}_{s}, \psi\rangle ds+\int_{t_{\lfloor \frac{t}{\delta}\rfloor}}^{t}\langle\tilde{Y}_{s}^{\epsilon,\rho^{\epsilon}_{t_{\lfloor \frac{t}{\delta}\rfloor}},t_{\lfloor \frac{t}{\delta}\rfloor}}-Y^{*,\rho^{\epsilon}_{t_{\lfloor \frac{t}{\delta}\rfloor}}},\psi\rangle ds\Big|\nonumber\\
&\leq&\sum_{k=0}^{\lfloor \frac{t}{\delta}\rfloor-1}\mathbb{E}\Big|\int_{t_{k}}^{t_{k+1}}\langle\tilde{Y}_{s}^{\epsilon,\rho_{t_{k}}^{\epsilon},t_{k}}-Y^{*,\rho_{t_{k}}^{\epsilon}}, \psi\rangle ds\Big|+\mathbb{E}\Big|\int_{t_{\lfloor \frac{t}{\delta}\rfloor}}^{t}\langle\tilde{Y}_{s}^{\epsilon,\rho^{\epsilon}_{t_{\lfloor \frac{t}{\delta}\rfloor}},t_{\lfloor \frac{t}{\delta}\rfloor}}-Y^{*,\rho^{\epsilon}_{t_{\lfloor \frac{t}{\delta}\rfloor}}},\psi\rangle ds\Big|\nonumber\\
&\leq&\sum_{k=0}^{\lfloor \frac{t}{\delta}\rfloor}\Big[\sqrt{\epsilon}\delta C_{s_{0},\gamma_{0},M,L_{K},L_{\sigma},X_{0},V_{0}}\|\psi\|_{Lip}+\epsilon\delta C_{s_{0},\gamma_{0}}|\langle (K*\rho_{t_{k}}^{\epsilon})\rho_{t_{k}}^{\epsilon}(x),\psi\rangle|\nonumber\\
&&+(2\epsilon\delta+\delta^{2})C_{M,\gamma_{0},L_{\gamma}}|\langle(1+\|x\|^{2})\rho_{t_{k}}^{\epsilon}(x),\psi\rangle|+\Big(\epsilon\delta+\frac{\delta^{2}}{\epsilon}\Big)C_{\gamma_{0},s_{0},L_{\sigma},L_{\gamma}}|\langle\rho_{t_{k}}^{\epsilon}(x),\psi\rangle|\nonumber\\
&&+\epsilon\delta C_{M,\gamma_{0}}|\langle (1+\|x\|^{2})\rho_{t_{k}}^{\epsilon}(x),\nabla_{x}\psi\rangle|+\epsilon\delta C_{\gamma_{0},s_{0},L_{\sigma}}|\langle \rho_{t_{k}}^{\epsilon}(x),\nabla_{x}\psi\rangle|\nonumber\\
&&+C_{L_{\sigma}}|\langle\rho_{t_{k}}^{\epsilon},|\psi|\rangle|o(\sqrt{\epsilon} e^{-\frac{\gamma_{0}}{\epsilon}s_{0}})\Big]\nonumber\\
&\leq&C_{\gamma_{0},s_{0},L_{\sigma},L_{\gamma},L_{K},M,\|\psi\|_{\infty},X_{0},V_{0}}\|\psi\|_{Lip}\Big(7\epsilon+\delta+\frac{\delta}{\epsilon}\Big)+C_{L_{\sigma}}\Big(1+\frac{T}{\delta}\Big)o(\sqrt{\epsilon} e^{-\frac{\gamma_{0}}{\epsilon}s_{0}}).
\end{eqnarray}
Similarly, 
\begin{eqnarray}
&&\mathbb{E}\Big|\sum_{k=0}^{\lfloor \frac{t}{\delta}\rfloor-1}\int_{t_{k}}^{t_{k+1}}\langle Y^{*,\rho_{t_{k}}^{\epsilon}}-Y^{*,\rho_{s}^{\epsilon}}, \psi\rangle ds+\int_{t_{\lfloor \frac{t}{\delta}\rfloor}}^{t}\langle Y^{*,\rho^{\epsilon}_{t_{\lfloor \frac{t}{\delta}\rfloor}}}-Y^{*,\rho_{s}^{\epsilon}},\psi\rangle ds\Big|\nonumber\\
&\leq&\sum_{k=0}^{\lfloor \frac{t}{\delta}\rfloor-1}\mathbb{E}\int_{t_{k}}^{t_{k+1}}\Big[\Big|\Big\langle\gamma(x)^{-1}(K*\rho_{t_{k}}^{\epsilon})\rho_{t_{k}}^{\epsilon}+\frac{1}{2}\sum_{k=1}^{\infty}\|\sigma_{k}(x)\|^{2}\nabla_{x}\gamma(x)\gamma(x)^{-3}\rho_{t_{k}}^{\epsilon}\nonumber\\
&&+\frac{1}{2}\nabla_{x}\cdot\Big(\sum_{k=1}^{\infty}\|\sigma_{k}(x)\|^{2}\rho_{t_{k}}^{\epsilon}\gamma(x)^{-2}\cdot I_{d\times d}\Big)+\sum_{l=1}^{\infty}\sigma_{l}(x)\rho_{t_{k}}^{\epsilon}\gamma(x)^{-1}\dot{B}_{s}^{l},\psi\Big\rangle\nonumber
\end{eqnarray}
\begin{eqnarray}\label{FDH1}
&&-\Big\langle \gamma(x)^{-1}(K*\rho_{s}^{\epsilon})\rho_{s}^{\epsilon}(x)+\frac{1}{2}\sum_{k=1}^{\infty}\|\sigma_{k}(x)\|^{2}\nabla_{x}\gamma(x)\gamma(x)^{-3}\rho_{s}^{\epsilon}\nonumber\\
&&+\frac{1}{2}\nabla_{x}\cdot\Big(\sum_{k=1}^{\infty}\|\sigma_{k}(x)\|^{2}\gamma(x)^{-2}\rho_{s}^{\epsilon}\cdot I_{d\times d}\Big)+\sum_{l=1}^{\infty}\sigma_{l}(x)\rho_{s}^{\epsilon}\gamma(x)^{-1}\dot{B}_{s}^{l},\psi\Big\rangle\Big|\Big]ds\nonumber\\
&&+\mathbb{E}\int_{t_{\lfloor \frac{t}{\delta}\rfloor}}^{t}\Big[\Big|\Big\langle\gamma(x)^{-1}(K*\rho_{t_{\lfloor \frac{t}{\delta}\rfloor}}^{\epsilon})\rho_{t_{\lfloor \frac{t}{\delta}\rfloor}}^{\epsilon}+\frac{1}{2}\sum_{k=1}^{\infty}\|\sigma_{k}(x)\|^{2}\nabla_{x}\gamma(x)\gamma(x)^{-3}\rho_{t_{\lfloor \frac{t}{\delta}\rfloor}}^{\epsilon}\nonumber\\
&&+\frac{1}{2}\nabla_{x}\cdot\Big(\sum_{k=1}^{\infty}\|\sigma_{k}(x)\|^{2}\rho_{t_{\lfloor \frac{t}{\delta}\rfloor}}^{\epsilon}\gamma(x)^{-2}\cdot I_{d\times d}\Big)+\sum_{l=1}^{\infty}\sigma_{l}(x)\rho_{t_{\lfloor \frac{t}{\delta}\rfloor}}^{\epsilon}\gamma(x)^{-1}\dot{B}_{s}^{l},\psi\Big\rangle\nonumber\\
&&-\Big\langle \gamma(x)^{-1}(K*\rho_{s}^{\epsilon})\rho_{s}^{\epsilon}(x)+\frac{1}{2}\sum_{k=1}^{\infty}\|\sigma_{k}(x)\|^{2}\nabla_{x}\gamma(x)\gamma(x)^{-3}\rho_{s}^{\epsilon}\nonumber\\
&&+\frac{1}{2}\nabla_{x}\cdot\Big(\sum_{k=1}^{\infty}\|\sigma_{k}(x)\|^{2}\gamma(x)^{-2}\rho_{s}^{\epsilon}\cdot I_{d\times d}\Big)+\sum_{l=1}^{\infty}\sigma_{l}(x)\rho_{s}^{\epsilon}\gamma(x)^{-1}\dot{B}_{s}^{l},\psi\Big\rangle\Big|\Big]ds\nonumber\\
&\leq&\sum_{k=0}^{\lfloor \frac{t}{\delta}\rfloor}\mathbb{E}\int_{t_{k}}^{t_{k+1}}|\langle \gamma(x)^{-1}(K*\rho_{t_{k}}^{\epsilon})\rho_{t_{k}}^{\epsilon}(x)-\gamma(x)^{-1}(K*\rho_{s}^{\epsilon})\rho_{s}^{\epsilon}(x),\psi\rangle|ds\nonumber\\
&&+\sum_{k=0}^{\lfloor \frac{t}{\delta}\rfloor}\mathbb{E}\int_{t_{k}}^{t_{k+1}}\Big|\Big\langle \frac{1}{2}\sum_{l=1}^{\infty}\|\sigma_{l}(x)\|^{2}\nabla_{x}\gamma(x)\gamma(x)^{-3}(\rho_{t_{k}}^{\epsilon}-\rho_{s}^{\epsilon}),\psi\Big\rangle\Big|ds\nonumber\\
&&+\sum_{k=0}^{\lfloor \frac{t}{\delta}\rfloor}\mathbb{E}\int_{t_{k}}^{t_{k+1}}\Big|\Big\langle \frac{1}{2}\sum_{l=1}^{\infty}\|\sigma_{l}(x)\|^{2}(\rho_{t_{k}}^{\epsilon}-\rho_{s}^{\epsilon})\gamma(x)^{-2}\cdot I_{d\times d},\nabla_{x}\psi\Big\rangle\Big|ds\nonumber\\
&&+\mathbb{E}\sum_{k=0}^{\lfloor \frac{t}{\delta}\rfloor}\Big|\int_{t_{k}}^{t_{k+1}}\Big\langle\sum_{l=1}^{\infty}\sigma_{l}(x)\gamma(x)^{-1}(\rho_{t_{k}}^{\epsilon}-\rho_{s}^{\epsilon}),\psi\Big\rangle dB_{s}^{l}\Big|\nonumber\\
&\triangleq&\sum_{k=0}^{\lfloor \frac{t}{\delta}\rfloor}\mathbb{E}\int_{t_{k}}^{t_{k+1}}\sum_{i=1}^{3}L_{i}^{\epsilon}(s)ds+\mathbb{E}\sum_{k=0}^{\lfloor \frac{t}{\delta}\rfloor}\Big|\int_{t_{k}}^{t_{k+1}}\Big\langle\sum_{l=1}^{\infty}\sigma_{l}(x)\gamma(x)^{-1}(\rho_{t_{k}}^{\epsilon}-\rho_{s}^{\epsilon}),\psi\Big\rangle dB_{s}^{l}\Big|.
\end{eqnarray}
Next we estimate the four terms. 

For $L_{1}^{\epsilon}(s)$, by the definition of expectation and $(\mathbf{H_{2}})$, we have
\begin{eqnarray}
L_{1}^{\epsilon}(s)&=&|\langle \gamma(x)^{-1}(K*\rho_{t_{k}}^{\epsilon})\rho_{t_{k}}^{\epsilon}(x)-\gamma(x)^{-1}(K*\rho_{s}^{\epsilon})\rho_{s}^{\epsilon}(x),\psi\rangle|\nonumber\\
&\leq&|\langle \gamma(x)^{-1}(K*\rho_{t_{k}}^{\epsilon}-K*\rho_{s}^{\epsilon})\rho_{t_{k}}^{\epsilon}(x),\psi\rangle|\nonumber\\
&&+|\langle \gamma(x)^{-1}(K*\rho_{s}^{\epsilon})(\rho_{t_{k}}^{\epsilon}(x)-\rho_{s}^{\epsilon}(x)),\psi\rangle|\nonumber\\
&\leq&|\langle \gamma(x)^{-1}(\mathbb{E}_{\bar{X}_{t_{k}}^{\epsilon,\mu^{\epsilon}}}[K(x-\bar{X}_{t_{k}}^{\epsilon,\mu^{\epsilon}})|\mathcal{F}_{t_{k}}^{B}]-\mathbb{E}_{\bar{X}_{s}^{\epsilon,\mu^{\epsilon}}}[K(x-\bar{X}_{s}^{\epsilon,\mu^{\epsilon}})|\mathcal{F}_{s}^{B}])\rho_{t_{k}}^{\epsilon}(x),\psi\rangle|\nonumber\\
&&+|\langle \gamma(x)^{-1}\mathbb{E}_{\bar{X}_{s}^{\epsilon,\mu^{\epsilon}}}[K(x-\bar{X}_{s}^{\epsilon,\mu^{\epsilon}})|\mathcal{F}_{s}^{B}](\rho_{t_{k}}^{\epsilon}-\rho_{s}^{\epsilon}),\psi\rangle|\nonumber\\
&=&|\mathbb{E}_{X_{t_{k}}^{\epsilon,\mu^{\epsilon}}}[\gamma(X_{t_{k}}^{\epsilon,\mu^{\epsilon}})^{-1}(\mathbb{E}_{\bar{X}_{t_{k}}^{\epsilon,\mu^{\epsilon}}}[K(X_{t_{k}}^{\epsilon,\mu^{\epsilon}}-\bar{X}_{t_{k}}^{\epsilon,\mu^{\epsilon}})|\mathcal{F}_{t_{k}}^{B}]\nonumber
\end{eqnarray}
\begin{eqnarray}\label{L1}
&&-\mathbb{E}_{\bar{X}_{s}^{\epsilon,\mu^{\epsilon}}}[K(X_{t_{k}}^{\epsilon,\mu^{\epsilon}}-\bar{X}_{s}^{\epsilon,\mu^{\epsilon}})|\mathcal{F}_{s}^{B}])\psi(X_{t_{k}}^{\epsilon,\mu^{\epsilon}})|\mathcal{F}_{t_{k}}^{B}]|\nonumber\\
&&+|\mathbb{E}_{X_{t_{k}}^{\epsilon,\mu^{\epsilon}}}[\gamma(X_{t_{k}}^{\epsilon,\mu^{\epsilon}})^{-1}\mathbb{E}_{\bar{X}_{s}^{\epsilon,\mu^{\epsilon}}}[K(X_{t_{k}}^{\epsilon,\mu^{\epsilon}}-\bar{X}_{s}^{\epsilon,\mu^{\epsilon}})|\mathcal{F}_{s}^{B}]\psi(X_{t_{k}}^{\epsilon,\mu^{\epsilon}})|\mathcal{F}_{t_{k}}^{B}]\nonumber\\
&&-\mathbb{E}_{X_{s}^{\epsilon,\mu^{\epsilon}}}[\gamma(X_{s}^{\epsilon,\mu^{\epsilon}})^{-1}\mathbb{E}_{\bar{X}_{s}^{\epsilon,\mu^{\epsilon}}}[K(X_{s}^{\epsilon,\mu^{\epsilon}}-\bar{X}_{s}^{\epsilon,\mu^{\epsilon}})|\mathcal{F}_{s}^{B}]\psi(X_{s}^{\epsilon,\mu^{\epsilon}})|\mathcal{F}_{s}^{B}]|\nonumber\\
&\triangleq&L_{1,1}^{\epsilon}(s)+L_{1,2}^{\epsilon}(s)
\end{eqnarray}
By (\ref{MKD}), for each $\pi\in\Pi(\mathcal{L}(\bar{X}_{t_{k}}^{\epsilon,\mu^{\epsilon}}|\mathcal{F}_{t_{k}}^{B}),\mathcal{L}(\bar{X}_{s}^{\epsilon,\mu^{\epsilon}}|\mathcal{F}_{s}^{B}))=\Pi(\mathcal{L}(\bar{X}_{t_{k}}^{\epsilon,\mu^{\epsilon}}|\mathcal{F}_{s}^{B}),\mathcal{L}(\bar{X}_{s}^{\epsilon,\mu^{\epsilon}}|\mathcal{F}_{s}^{B}))$, together with $(\mathbf{H_{1}})$, 
\begin{eqnarray*}
L_{1,1}^{\epsilon}(s)&\leq&\frac{\|\psi\|_{Lip}}{\gamma_{0}}\mathbb{E}_{X_{t_{k}}^{\epsilon,\mu^{\epsilon}}}[\|\mathbb{E}_{\bar{X}_{t_{k}}^{\epsilon,\mu^{\epsilon}}}[K(X_{t_{k}}^{\epsilon,\mu^{\epsilon}}-\bar{X}_{t_{k}}^{\epsilon,\mu^{\epsilon}})|\mathcal{F}_{t_{k}}^{B}]-\mathbb{E}_{\bar{X}_{s}^{\epsilon,\mu^{\epsilon}}}[K(X_{t_{k}}^{\epsilon,\mu^{\epsilon}}-\bar{X}_{s}^{\epsilon,\mu^{\epsilon}})|\mathcal{F}_{s}^{B}]\||\mathcal{F}_{t_{k}}^{B}]\nonumber\\
&=&\frac{\|\psi\|_{Lip}}{\gamma_{0}}\mathbb{E}_{X_{t_{k}}^{\epsilon,\mu^{\epsilon}}}\Big[\Big\|\int_{\mathbb{R}^{d}}K(X_{t_{k}}^{\epsilon,\mu^{\epsilon}}-y)\mathcal{L}(\bar{X}_{t_{k}}^{\epsilon,\mu^{\epsilon}}|\mathcal{F}_{t_{k}}^{B})(dy)\nonumber\\
&&-\int_{\mathbb{R}^{d}}K(X_{t_{k}}^{\epsilon,\mu^{\epsilon}}-y)\mathcal{L}(\bar{X}_{s}^{\epsilon,\mu^{\epsilon}}|\mathcal{F}_{s}^{B})(dy)\Big\|\Big|\mathcal{F}_{t_{k}}^{B}\Big]\nonumber\\
&=&\frac{\|\psi\|_{Lip}}{\gamma_{0}}\mathbb{E}\Big[\Big\|\int_{\mathbb{R}^{d}\times\mathbb{R}^{d}}K(X_{t_{k}}^{\epsilon,\mu^{\epsilon}}-y)\pi(dy,dz)-\int_{\mathbb{R}^{d}\times\mathbb{R}^{d}}K(X_{t_{k}}^{\epsilon,\mu^{\epsilon}}-z)\pi(dy,dz)\Big\|\Big|\mathcal{F}_{t_{k}}^{B}\Big]\nonumber\\
&\leq&C_{\gamma_{0},L_{K}}\|\psi\|_{Lip}\int_{\mathbb{R}^{d}\times\mathbb{R}^{d}}\|y-z\|\pi(dy,dz),
\end{eqnarray*}
then together with (\ref{MKD}) and (\ref{equ:2.101}),
\begin{eqnarray}\label{L1-1}
\mathbb{E}L_{1,1}^{\epsilon}(s)&\leq&C_{\gamma_{0},\|\psi\|_{\infty},L_{K}}\mathbb{E}\|X_{t_{k}}^{\epsilon,\mu^{\epsilon}}-X_{s}^{\epsilon,\mu^{\epsilon}}\|^{2}\|\psi\|_{Lip}.
\end{eqnarray}
Similarly, for $L_{1,2}^{\epsilon}(s)$, taking any $\pi\in\Pi(\mathcal{L}(X_{t_{k}}^{\epsilon,\mu^{\epsilon}}|\mathcal{F}_{s}^{B}),\mathcal{L}(X_{s}^{\epsilon,\mu^{\epsilon}}|\mathcal{F}_{s}^{B}))$,
\begin{eqnarray*}
L_{1,2}^{\epsilon}(s)&=&\|\mathbb{E}_{X_{t_{k}}^{\epsilon,\mu^{\epsilon}}}[\gamma(X_{t_{k}}^{\epsilon,\mu^{\epsilon}})^{-1} \mathbb{E}_{\bar{X}_{s}^{\epsilon,\mu^{\epsilon}}}[K(X_{t_{k}}^{\epsilon,\mu^{\epsilon}}-\bar{X}_{s}^{\epsilon,\mu^{\epsilon}})|\mathcal{F}_{s}^{B}]\psi(X_{t_{k}}^{\epsilon,\mu^{\epsilon}})|\mathcal{F}_{t_{k}}^{B}]\nonumber\\
&&-\mathbb{E}_{X_{s}^{\epsilon,\mu^{\epsilon}}}[\gamma(X_{s}^{\epsilon,\mu^{\epsilon}})^{-1}\mathbb{E}_{\bar{X}_{s}^{\epsilon,\mu^{\epsilon}}}[K(X_{s}^{\epsilon,\mu^{\epsilon}}-\bar{X}_{s}^{\epsilon,\mu^{\epsilon}})|\mathcal{F}_{s}^{B}]\psi(X_{s}^{\epsilon,\mu^{\epsilon}})|\mathcal{F}_{s}^{B}]\|\nonumber\\
&\leq&\Big\|\int_{\mathbb{R}^{d}\times\mathbb{R}^{d}}(\gamma(x)^{-1}-\gamma(z)^{-1})\mathbb{E}[K(x-\bar{X}_{s}^{\epsilon,\mu^{\epsilon}})|\mathcal{F}_{s}^{B}]\psi(x)\pi(dx,dz)\Big\|\nonumber\\
&&+\Big\|\int_{\mathbb{R}^{d}\times\mathbb{R}^{d}}\gamma(z)^{-1}\mathbb{E}[K(x-\bar{X}_{s}^{\epsilon,\mu^{\epsilon}})-K(z-\bar{X}_{s}^{\epsilon,\mu^{\epsilon}})|\mathcal{F}_{s}^{B}]\psi(x)\pi(dx,dz)\Big\|\nonumber\\
&&+\Big\|\int_{\mathbb{R}^{d}\times\mathbb{R}^{d}}\gamma(z)^{-1}\mathbb{E}[K(z-\bar{X}_{s}^{\epsilon,\mu^{\epsilon}})|\mathcal{F}_{s}^{B}](\psi(x)-\psi(z))\pi(dx,dz)\Big\|\nonumber\\
&\leq&\frac{L_{\gamma}\|\psi\|_{Lip}}{\gamma_{0}^{2}}\int_{\mathbb{R}^{d}\times\mathbb{R}^{d}}\|x-z\|(L_{K}\mathbb{E}[\|x-X_{s}^{\epsilon,\mu^{\epsilon}}\||\mathcal{F}_{s}^{B}]+\|K(0)\|)\pi(dx,dz)\nonumber\\
&&+\frac{L_{K}}{\gamma_{0}}\|\psi\|_{Lip}\int_{\mathbb{R}^{d}\times\mathbb{R}^{d}}\|x-z\|\pi(dx,dz)\nonumber\\
&&+\frac{\|\psi\|_{Lip}}{\gamma_{0}}\int_{\mathbb{R}^{d}\times\mathbb{R}^{d}}\|x-z\|(L_{K}\mathbb{E}[\|z-X_{s}^{\epsilon,\mu^{\epsilon}}\||\mathcal{F}_{s}^{B}]+\|K(0)\|)\pi(dx,dz)\nonumber
\end{eqnarray*}
\begin{eqnarray*}
&\leq&\frac{L_{K}L_{\gamma}\|\psi\|_{Lip}}{\gamma_{0}^{2}}\Big[\Big(\int_{\mathbb{R}^{d}\times\mathbb{R}^{d}}\|x-z\|^{2}\pi(dx,dz)\Big)^{\frac{1}{2}}\Big(\int_{\mathbb{R}^{d}\times\mathbb{R}^{d}}\|x\|^{2}\pi(dx,dz)\Big)^{\frac{1}{2}}\nonumber\\
&&+\Big(\int_{\mathbb{R}^{d}\times\mathbb{R}^{d}}\|x-z\|^{2}\pi(dx,dz)\Big)^{\frac{1}{2}}\mathbb{E}[\|X_{s}^{\epsilon,\mu^{\epsilon}}\||\mathcal{F}_{s}^{B}]\Big]\nonumber\\
&&+\frac{L_{\gamma}L_{K}\|\psi\|_{\infty}\|K(0)\|}{\gamma_{0}^{2}}\Big(\int_{\mathbb{R}^{d}\times\mathbb{R}^{d}}\|x-z\|^{2}\pi(dx,dz)\Big)^{\frac{1}{2}}\nonumber\\
&&+\frac{L_{K}\|\psi\|_{Lip}}{\gamma_{0}}\Big(\int_{\mathbb{R}^{d}\times\mathbb{R}^{d}}\|x-z\|^{2}\pi(dx,dz)\Big)^{\frac{1}{2}}\nonumber\\
&&+\frac{L_{K}\|\psi\|_{Lip}}{\gamma_{0}}\Big[\Big(\int_{\mathbb{R}^{d}\times\mathbb{R}^{d}}\|x-z\|^{2}\pi(dx,dz)\Big)^{\frac{1}{2}}\Big(\int_{\mathbb{R}^{d}\times\mathbb{R}^{d}}\|z\|^{2}\pi(dx,dz)\Big)^{\frac{1}{2}}\nonumber\\
&&+\Big(\int_{\mathbb{R}^{d}\times\mathbb{R}^{d}}\|x-z\|^{2}\pi(dx,dz)\Big)^{\frac{1}{2}}\mathbb{E}[\|X_{s}^{\epsilon,\mu^{\epsilon}}\||\mathcal{F}_{s}^{B}]\nonumber\\
&&+\frac{L_{\gamma}\|K(0)\|\|\psi\|_{Lip}}{\gamma_{0}}\Big(\int_{\mathbb{R}^{d}\times\mathbb{R}^{d}}\|x-z\|^{2}\pi(dx,dz)\Big)^{\frac{1}{2}},\nonumber
\end{eqnarray*}
then by the H${\rm\ddot{o}}$lder inequality, 
\begin{eqnarray}\label{L1-2}
\mathbb{E}L_{1,2}^{\epsilon}(s)&\leq& C_{L_{K},L_{\gamma},\gamma_{0}}\|\psi\|_{Lip}\mathbb{E}\|X_{s}^{\epsilon,\mu^{\epsilon}}-X_{t_{k}}^{\epsilon,\mu^{\epsilon}}\|^{2}[1+\mathbb{E}\|X_{s}^{\epsilon,\mu^{\epsilon}}\|^{2}\nonumber\\
&&+\mathbb{E}\|X_{t_{k}}^{\epsilon,\mu^{\epsilon}}\|^{2}].
\end{eqnarray}
Combining (\ref{L1}) with (\ref{L1-1}), (\ref{L1-2}), Lemma \ref{SUB} and Lemma \ref{SC}, 
\begin{eqnarray}\label{L1Z}
\mathbb{E}L_{1}^{\epsilon}(s)\leq\delta C_{T,X_{0},V_{0},L_{K},L_{\gamma},\gamma_{0}}\|\psi\|_{Lip}.
\end{eqnarray}
Similarly, by $(\mathbf{H_{3}})$, we have
\begin{eqnarray}\label{L2}
\mathbb{E}L_{2}^{\epsilon}(s)&=&\mathbb{E}\Big|\Big\langle \frac{1}{2}\sum_{k=1}^{\infty}\|\sigma_{k}(x)\|^{2}\nabla_{x}\gamma(x)\gamma(x)^{-3}(\rho_{t_{k}}^{\epsilon}-\rho_{s}^{\epsilon}),\psi\Big\rangle\Big|\nonumber\\
&\leq&\frac{\frac{1}{2}L_{\gamma}\sup_{x}\sum_{l=1}^{\infty}\|\sigma_{l}(x)\|^{2}}{\gamma_{0}^{3}}\mathbb{E}|\langle\rho_{t_{k}}^{\epsilon}-\rho_{s}^{\epsilon},\psi\rangle|\nonumber\\
&\leq&C_{\gamma_{0},L_{\sigma},L_{\gamma}}\mathbb{E}\|\mathbb{E}[\|X_{t_{k}}^{\epsilon,\mu^{\epsilon}}-X_{s}^{\epsilon,\mu^{\epsilon}}\||\mathcal{F}_{s}^{B}]\|\|\psi\|_{Lip}\nonumber\\
&\leq&C_{T, X_{0},V_{0},\gamma_{0},L_{\sigma},L_{\gamma}}\|\psi\|_{Lip}\sqrt{\delta},
\end{eqnarray}
and
\begin{eqnarray}\label{L3}
\mathbb{E}L_{3}^{\epsilon}(s)&=&\mathbb{E}\Big|\Big\langle \frac{1}{2}\sum_{k=1}^{\infty}\|\sigma_{k}(x)\|^{2}(\rho_{t_{k}}^{\epsilon}-\rho_{s}^{\epsilon})\gamma(x)^{-2},\psi\Big\rangle\Big|\nonumber\\
&\leq&C_{\gamma_{0},L_{\sigma}}\mathbb{E}\|X_{t_{k}}^{\epsilon,\mu^{\epsilon}}-X_{s}^{\epsilon,\mu^{\epsilon}}\|\|\psi\|_{Lip}\nonumber\\
&\leq&C_{\gamma_{0},L_{\sigma}}\|\psi\|_{Lip}\sqrt{\delta}.
\end{eqnarray}
By (\ref{F4-2}), $(\mathbf{H_{2}})$ and $(\mathbf{H_{3}})$, as $\delta\to0$,
\begin{eqnarray}\label{SJ}
&&\mathbb{E}\Big|\sum_{k=0}^{\lfloor \frac{t}{\delta}\rfloor}\int_{t_{k}}^{t_{k+1}}\Big\langle\sum_{l=1}^{\infty}\sigma_{l}(x)\gamma(x)^{-1}(\rho_{t_{k}}^{\epsilon}-\rho_{s}^{\epsilon}),\psi\Big\rangle dB_{s}^{l}\Big|\nonumber\\
&=&\mathbb{E}\Big|\sum_{l=1}^{\infty}\Big\langle\sigma_{l}(x)\gamma(x)^{-1}\sum_{k=0}^{\lfloor \frac{t}{\delta}\rfloor}\rho_{t_{k}}^{\epsilon}(B_{t_{k+1}}^{l}-B_{t_{k}}^{l})\Big\rangle-\sum_{l=1}^{\infty}\Big\langle\sigma_{l}(x)\gamma(x)^{-1}\int_{0}^{t}\rho_{s}^{\epsilon}dB_{s}^{l},\psi\Big\rangle\Big|\nonumber\\
&\leq&\frac{\|\psi\|_{Lip}}{\gamma_{0}}\sum_{l=1}^{\infty}\|\sigma_{l}(x)\|\Big(\mathbb{E}\Big|\sum_{k=0}^{\lfloor \frac{t}{\delta}\rfloor}\rho_{t_{k}}^{\epsilon}(B_{t_{k+1}}^{l}-B_{t_{k}}^{l})-\int_{0}^{t}\rho_{s}^{\epsilon}dB_{s}^{l}\Big|^{2}\Big)^{\frac{1}{2}}\nonumber\\
&\leq&C_{\gamma_{0},L_{\sigma}}\|\psi\|_{Lip}o(\delta)^{\frac{1}{2}}.
\end{eqnarray}
Combining (\ref{FDH1}) with (\ref{L1Z}), (\ref{L2}), (\ref{L3}) and (\ref{SJ}), we obtain
\begin{eqnarray}\label{FDH2}
&&\mathbb{E}\Big|\sum_{k=0}^{\lfloor \frac{t}{\delta}\rfloor-1}\int_{t_{k}}^{t_{k+1}}\langle Y^{*,\rho_{t_{k}}^{\epsilon}}-Y^{*,\rho_{s}^{\epsilon}}, \psi\rangle ds+\int_{t_{\lfloor \frac{t}{\delta}\rfloor}}^{t}\langle Y^{*,\rho^{\epsilon}_{t_{\lfloor \frac{t}{\delta}\rfloor}}}-Y^{*,\rho_{s}^{\epsilon}},\psi\rangle ds\Big|\nonumber\\
&&\leq C_{\gamma_{0},L_{K},T}\sqrt{\delta}\|\psi\|_{Lip}+C_{\gamma_{0},\|\psi\|_{\infty},L_{\sigma}}o(\delta)^{\frac{1}{2}}.
\end{eqnarray}
Choosing $\delta=\mathcal{O}(\epsilon^{3})$ in (\ref{LL1}), (\ref{F4-6}), (\ref{FDH}) and (\ref{FDH2}), we obtain
\begin{eqnarray*}
\mathbb{E}\langle\partial_{t}\rho_{t},\phi\rangle=-\mathbb{E}\langle\nabla_{x}\cdot Y^{*,\rho_{t}}_{t},\phi\rangle.
\end{eqnarray*}


\begin{thebibliography}{99}
\small \setlength{\itemsep}{-.8mm}

\bibitem{BC}F. Bolley, J.A. Ca${\rm\tilde{n}}$izo \& J.A. Carrillo, Stochastic mean-field limit: Non-Lipschitz forces and swarming, {\em Math. Models Methods Appl. Sci.}, 21 (11) (2011) 2179-2210.

\bibitem{BBH}M. Behr, P. Benner \& J. Heiland, Solution formulas for differential Sylvester and Lyapunov equations, {\em Arxiv: 1811.08327v1}.

\bibitem{CC}J.A. Carrillo, Y.P. Choi, Mean-field limit: From particle descriptions to macroscopic equations, {\em Arch. Ration. Mech. Anal.}, 241 (3) (2021) 1529-1573.

\bibitem{CSa}S. Cerrai, Normal deviations from the averaged motion for some reaction-diffusion equations with fast oscillating perturbation, {\em J. Math. Pures Appl.}, 91 (2009) 614-647.

\bibitem{CF} M. Coghi \& F. Flandoli, Propagation of chaos for interacting particles subject to environmental noise, {\em Ann. Appl. Probab.}, 26 (2016) 1407-1442.  

\bibitem{CT}Y.P. Choi, O. Tse, Quantified overdamped limit for kinetic Vlasov-Fokker-Planck equations with singular interaction forces, {\em J. Differ. Equ.}, 330 (2022) 150-207. 

\bibitem{D}D.A. Dawson, Critical dynamics and fluctuations for a mean-field model of cooperative behaviour, {\em J. Stat. Phys.}, 31 (1) (1983) 29-85.

\bibitem{DW}J.Q. Duan \& W. Wang, {\em Effective dynamics of stochastic partial differential equations}, Elsevier, London, 2014.

\bibitem{FS}R.C. Fetecau \& W. Sun, First-order aggregation models and zero inertial limits, {\em J. Differ. Equa.}, 259 (2015) 6774-6802.

\bibitem{FST}R.C. Fetecau, W. Sun \& C.H. Tan, First-order aggregation models with alignment, {\em Phys. D}, 325 (2016) 146-163.

\bibitem{G} F. Golse, On the dynamics of large particle systems in the mean field limit, in {\em Macroscopic and Large Scale Phenomena: Coarse Graining, Mean Field Limits and Ergodicity}, Lecture Notes in Applied Mathematics and Mechanics, Vol 3 (Springer, Cham, 2016), pp. 1-144.

\bibitem{HMVW}S. Hottovy, A. McDaniel, G. Volpe \& J. Wehr, The Smoluchowski-Kramers limit of stochastic differential equations with arbitrary state-dependent friction, {\em Comm. Math. Phys.}, 336 (2015) 1259-1283. 

\bibitem{HQ}H. Huang \& J.N. Qiu, The microscopic derivation and well-posedness of the stochastic Keller-Segel equation, {\em J. Nonlinear Sci.}, {\em doi:10.1007/s00332-020-09661-6}.

\bibitem{HJNXZ}S.Y. Ha, J. Jeong, S.E. Noh, Q.H. Xiao \& X.T. Zhang, Emergent dynamics of Cucker-Smale flocking particles in a random environment, {\em J. Differ. Equa.}, 262 (2017) 2554-2591.

\bibitem{Hu}H. Huang, Quantitative estimate of the overdamped limit for the Vlasov-Fokker-Planck systems, {\em Partial Differ. Equ. Appl. Math.}, 4 (2021) 100186.

\bibitem{Ja}P.E. Jabin, Macroscopic limit of Vlasov type equations with friction, {\em Ann. Inst. Henri. Poincar${\rm \acute{e}}$}, 17 (2000) 651-672.

\bibitem{Ka}T.K. Karper, Hydrodynamic limit of the kinetic Cucker-Smale flocking model, {\em Math. Models Methods Appl. Sci.}, 25 (2015) 131-163.

\bibitem{LW}X. Liu \& W. Wang, Small mass limit in the propagation of chaos for interacting particles system with common noise, {\em Under review}, 2021.

\bibitem{M}S. M${\rm\acute{e}}$l${\rm\acute{e}}$ard, Asymptotic behaviour of some interacting particle systems; Mckean-Vlasov and Boltzmann moldels, in {\em Probabilistic Models For Nonlinear Partial Differential Equations}, Lecture Notes in Mathematics Vol. 1464 (Springer, Berlin, 1996), pp. 42-95.

\bibitem{M1}M. Freidlin, Some remarks on the Smoluchowski-Kramers approximation, {\em J. Stat. Phys.}, 117 (3/4) (2004) 617-634.

\bibitem{Ro}A. Rosello, Weak and strong mean-field limits for stochastic Cucker-Smale particle systems, {\em Arxiv: 1905.02499v1}, 2019.

\bibitem{Se}S. Serfaty, Mean field limit for Coulomb-type flows, {\em J. Duke Math.}, 169 (2020) 2887-2935.

\bibitem{SMD}J.M. Sancho, M.S. Miguel \& D. D${\rm\ddot{u}}$rr, Adiabatic elimination for systems of Brownian particles with non-constant damping coefficients, {\em J. Stat. Phys.}, 28 (2) (1982) 291-305.

\bibitem{S} A.S. Sznitman, Topics in propagation of chaos, in {\em Ecole d'$\acute{E}$t$\acute{e}$ de Probabilit$\acute{e}$s de Saint-Flour XIX-1989,} Lecture Notes in Mathematics Vol. 1464 (Springer, Berlin, 1991), pp. 165-251.

\bibitem{T}R.F. Tchuendom, Uniqueness for linear-quadratic mean field games with common noise, {\em Dyn. Games Appl.}, 8 (2018) 199–210. 

\bibitem{WLW}W. Wang, G.Y. Lv \& J.L. Wei, Small mass limit in mean field theory for stochastic $N$ particle system, {\em J. Math. Phys.}, 63 (2022) 083302.






\end{thebibliography}
\end{document}